\theoremstyle{plain}
\newtheorem{theorem}{Theorem} [section]
\newtheorem{lemma}[theorem]{Lemma}
\newtheorem{prop}[theorem]{Proposition}
\newtheorem*{teoA}{Main Theorem}
\newtheorem*{kj}{Kohler-Jobin inequality}
\newtheorem*{conj}{Conjecture}
\theoremstyle{definition}
\newtheorem{definition}[theorem]{Definition}
\newtheorem{oss}[theorem]{Remark}
\newtheorem*{ack}{Acknowledgements}
\numberwithin{theorem}{section}
\numberwithin{equation}{section}
\numberwithin{figure}{section}
\def\mean#1{\mathchoice%                                                             %integral mean
         {\mathop{\kern 0.2em\vrule width 0.6em height 0.69678ex depth -0.58065ex
                 \kern -0.8em \intop}\nolimits_{\kern -0.4em#1}}%
         {\mathop{\kern 0.1em\vrule width 0.5em height 0.69678ex depth -0.60387ex
                 \kern -0.6em \intop}\nolimits_{#1}}%
         {\mathop{\kern 0.1em\vrule width 0.5em height 0.69678ex
             depth -0.60387ex
                 \kern -0.6em \intop}\nolimits_{#1}}%
         {\mathop{\kern 0.1em\vrule width 0.5em height 0.69678ex depth -0.60387ex
                 \kern -0.6em \intop}\nolimits_{#1}}}
\def\R{\mathbb R}
\def\eps{\varepsilon}
\def\A{\mathcal A}
\def\F{\mathcal F}
\def\G{\mathcal G}
\DeclareMathOperator*{\osc}{osc}
\DeclareMathOperator{\dist}{dist}
\title{Faber-Krahn inequalities in sharp quantitative form}
\author[Brasco]{Lorenzo Brasco}
\address{{\bf L. B.} Laboratoire d'Analyse, Topologie, Probabilit\'es, Aix-Marseille Universit\'e, 39 Rue Fr\'ed\'eric Joliot Curie, 13453 Marseille Cedex 13, France}
\email{lorenzo.brasco@univ-amu.fr}
\author[De Philippis]{Guido De Philippis}
\address{{\bf G. D. P.} Hausdorff Center for Mathematics, Endenicher Allee 62, D-53115 Bonn, Germany}
\email{guido.de.philippis@hcm.uni-bonn.de}
\author[Velichkov]{Bozhidar Velichkov}
\address{{\bf B. V.} Scuola Normale Superiore di Pisa Piazza dei Cavalieri 7, 56126 Pisa, Italy}
\email{b.velichkov@sns.it}
\keywords{Stability for eigenvalues, regularity for free boundaries, torsional rigidity}
\subjclass[2010]{47A75, 49Q20, 49R05}
\begin{document}

\begin{abstract}
The classical Faber-Krahn inequality asserts that balls (uniquely) minimize the first eigenvalue of the Dirichlet-Laplacian among sets with given volume. In this paper we prove a sharp quantitative enhancement of this result, thus confirming a conjecture by Nadirashvili and Bhattacharya-Weitsman. More generally, the result applies to every optimal Poincar\'e-Sobolev constant for the embeddings $W^{1,2}_0(\Omega)\hookrightarrow L^q(\Omega)$.
\end{abstract}
\maketitle

\section{Introduction}\label{intro}

\subsection{Background}
Let $\Omega\subset\mathbb{R}^N$ be an open set with finite measure, we denote by $W^{1,2}_0(\Omega)$ the closure of $C^\infty_0(\Omega)$ in the norm
\[
\|u\|_{W^{1,2}_0(\Omega)}=\left(\int_{\Omega} |\nabla u|^2\, dx\right)^{1/2}.
\]
The {\it first eigenvalue of the Dirichlet-Laplacian} of $\Omega$ is defined by
\[
\lambda(\Omega)=\min_{u\in W^{1,2}_0(\Omega)}\left\{\int_\Omega |\nabla u|^2\, dx\, :\, \|u\|_{L^2(\Omega)}=1\right\}.
\]
The quantity $\lambda(\Omega)$ is also called {\it principal frequency} of the set $\Omega$. If we denote by $\Delta$ the usual Laplace operator, $\lambda(\Omega)$ coincides with the smallest real number $\lambda$ for which the Helmholtz equation
\[
-\Delta u=\lambda\, u\quad \mbox{ in }\Omega,\qquad u=0,\quad \mbox{ on }\partial\Omega,
\]
admits non-trivial solutions. 
\par
A classical optimization problem connected with $\lambda$ is the following one: among sets with given volume, find the one which minimizes the principal frequency $\lambda$. Actually, balls are the (only) solutions to this problem. 
As $\lambda$ has the dimensions of a length to the power $-2$, this ``isoperimetric'' property can be equivalently rewritten as 
\begin{equation}
\label{faber}
|\Omega|^{2/N}\,\lambda(\Omega)\ge |B|^{2/N}\,\lambda(B),
\end{equation}
where $B$ denotes a generic $N-$dimensional ball and $|\cdot|$ stands for the $N-$dimensional Lebesgue measure of a set. Moreover, equality holds in \eqref{faber} if and only if $\Omega$ is a ball. The estimate \eqref{faber} is the celebrated {\it Faber-Krahn inequality}. We recall that the usual proof of  this inequality relies on the so-called {\it Schwarz symmetrization} (see \cite[Chapter 2]{He}). The latter consists in associating to each positive function $u\in W^{1,2}_0(\Omega)$ a radially symmetric decreasing function $u^*\in W^{1,2}_0(B_\Omega)$, where $B_\Omega$ is the ball centered at the origin such that $|B_\Omega|=|\Omega|$. The function $u^*$ is {\it equimeasurable} with $u$, that is
\[
|\{x\, :\, u(x)>t\}|=|\{x\, :\, u^*(x)>t\}|,\qquad \mbox{ for every } t\ge 0,
\]
so that in particular every $L^q$ norm of the function $u$ is preserved.
More interestingly, one has the well-known {\it P\'olya-Szeg\H{o} principle}
\begin{equation}
\label{PS}
\int_{B_\Omega} |\nabla u^*|^2\, dx\le \int_\Omega |\nabla u|^2\, dx,
\end{equation}
from which the Faber-Krahn inequality easily follows.
\vskip.2cm
The fact that balls can be characterized as the only sets for which equality holds in \eqref{faber}, naturally leads to consider the question of its {\it stability}. More precisely, one would like to improve \eqref{faber}, by adding in its right-hand side a reminder term measuring the deviation of a set $\Omega$ from spherical symmetry. A typical quantitative Faber-Krahn inequality then reads as follows
\begin{equation}
\label{enforced}
|\Omega|^{2/N}\,\lambda(\Omega)- |B|^{2/N}\,\lambda(B)\ge g(d(\Omega)),
\end{equation}
where $g$ is a modulus of continuity and $\Omega\mapsto d(\Omega)$ is some scaling invariant {\it asymmetry functional}.
The quest for quantitative versions like \eqref{enforced} is not new and has attracted an increasing interest in the last years. To the best of our knowledge, the first ones to prove partial results in this direction have been Hansen and Nadirashvili in \cite{HN} and Melas in \cite{Me}. Both papers treat the case of simply connected sets in dimension $N=2$ or the case of convex sets in general dimensions. These pioneering results prove inequalities like \eqref{enforced}, with a modulus of continuity (typically a power function) depending on the dimension $N$ and with the following asymmetry functionals\footnote{In the paper \cite{Me}, the quantitative result is stated in a slighlty different form, but it is not difficult to see that it can be written as in \eqref{enforced}, by using the functional $d_2(\Omega)$.}
\[
d_1(\Omega)=1-\frac{r_\Omega}{r_{B_\Omega}},\qquad\qquad \mbox{ where }\quad \begin{array}{rl} 
r_\Omega=&\mbox{\,inradius of }\Omega,\\
 r_{B_\Omega}=&\mbox{radius of the ball $B_\Omega$},
\end{array}
\]
like in \cite{HN}, or
\[
d_2(\Omega)=\min\left\{\frac{\max\{|\Omega\setminus B_1|,|B_2\setminus \Omega|\}}{|\Omega|}\, :\, B_1\subset \Omega\subset B_2 \mbox{ balls}\right\},
\]
as in \cite{Me}.
It is easy to see that for general sets an estimate like \eqref{enforced} with the previous asymmetry functionals {\it can not be true} (just think of a ball with a small hole at the center). In the general case, a better notion of asymmetry is the so called {\it Fraenkel asymmetry}, defined as 
\begin{equation}
\label{eq:asymmetry}
\mathcal{A}(\Omega)=\inf\left\{\frac{|\Omega\Delta B|}{|B|} \, :\,  \text{ \(B\) ball such that \(|B|=|\Omega|\)}\right\},
\end{equation}
where the symbol $\Delta$ now stands for the symmetric difference between sets. For such an asymmetry functional, Bhattacharya and Weitsman \cite{BW} and Nadirashvili \cite{Na} indipendently conjectured the following.  
\begin{conj}
There exists a dimensional constant $\sigma>0$ such that
\begin{equation}
\label{BWN}
|\Omega|^{2/N}\,\lambda(\Omega)- |B|^{2/N}\,\lambda(B)\ge \sigma\, \mathcal{A}(\Omega)^2.
\end{equation}
\end{conj}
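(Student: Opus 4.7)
The plan is to argue by contradiction via a selection/regularization scheme à la Cicalese--Leonardi, reducing the problem to a Fuglede-type second variation on nearly spherical sets. I normalize to $|\Omega|=|B|$ and write the deficit as $\mathcal{D}(\Omega)=\lambda(\Omega)-\lambda(B)$. Suppose the inequality fails: there exist sets $\Omega_n$ with $|\Omega_n|=|B|$, $\mathcal{A}(\Omega_n)=\eps_n\to 0$, and $\mathcal{D}(\Omega_n)\le \eps_n^2/n$.

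First I would replace $\Omega_n$ by a more regular competitor $U_n$, obtained as a minimizer of a penalized shape functional of the form
\[
\mathcal{F}_n(\Omega)=\lambda(\Omega)+\Lambda_1\bigl||\Omega|-|B|\bigr|+\Lambda_2\bigl|\mathcal{A}(\Omega)-\eps_n\bigr|,
\]
where $\Lambda_1,\Lambda_2$ are large dimensional constants whose role is to enforce the volume constraint and to prescribe the asymmetry at scale $\eps_n$. A standard selection argument (using $\mathcal{D}(\Omega_n)\le \eps_n^2/n$ as a competitor bound) shows that $U_n$ still violates the conjecture up to a fixed factor, so it suffices to establish \eqref{BWN} for the selected sets. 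The gain is that $U_n$ is, by construction, an almost-minimizer of the Dirichlet eigenvalue functional with a small volume penalization: it satisfies, in the viscosity sense, a free boundary condition $|\nabla u_n|=c_n$ on $\partial U_n$, where $u_n$ is the first normalized eigenfunction.

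The second (and hardest) step is to upgrade this to quantitative regularity. Using the theory developed for shape optimizers of spectral functionals (Alt--Caffarelli type arguments adapted to the eigenvalue setting), I expect $U_n$ to be uniformly Reifenberg-flat for $n$ large, then $C^{1,\alpha}$-close to $\partial B$, with $\|\partial U_n-\partial B\|_{C^{1,\alpha}}\to 0$. This is the main obstacle: the penalization $|\mathcal{A}(\Omega)-\eps_n|$ is only an $L^1$-type constraint on the boundary, so the almost-minimality estimate is weaker than in the classical isoperimetric setting, and one must carefully track the scale at which the regularity theory kicks in (dyadic decay of flatness, boundary Harnack principle for the eigenfunctions) to conclude that the penalization is inactive at small scales.

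Once $\partial U_n=\{(1+v_n(\omega))\omega:\omega\in S^{N-1}\}$ with $v_n\to 0$ in $C^{1,\alpha}$, I would conclude by a Fuglede-type expansion: for such nearly spherical sets a direct second-order calculation gives
\[
\lambda(U_n)-\lambda(B)\ge c_N\|v_n\|_{H^{1/2}(S^{N-1})}^2-C\|v_n\|_{C^1}\|v_n\|_{H^{1/2}}^2,
\]
after subtracting the two degrees of freedom (volume and barycenter) which are already controlled by the penalization and by translation-invariance of $\mathcal{A}$. Since $\mathcal{A}(U_n)\approx\int_{S^{N-1}}|v_n|\,d\mathcal{H}^{N-1}\le C\|v_n\|_{H^{1/2}}$, the quadratic lower bound $\mathcal{D}(U_n)\ge c\,\mathcal{A}(U_n)^2$ follows for $n$ large, contradicting the failure assumption and proving \eqref{BWN}.
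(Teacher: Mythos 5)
Your architecture (contradiction, Ekeland-style selection of a more regular competitor, free-boundary regularity, Fuglede-type second variation on nearly spherical sets) is the same as the paper's, but two ideas that make the scheme actually close are missing, and without them the middle step fails.

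\textbf{The Fraenkel asymmetry in the penalization blocks the regularity you need.} You penalize with $\Lambda_2\,|\mathcal{A}(\Omega)-\eps_n|$. Even after smoothing the absolute value, the Euler--Lagrange condition for a set containing $\mathcal{A}(\cdot)$ in the penalty has the form
\[
|\nabla u_n|^2 = c_n + \mu_n\,\bigl(1_{\mathbb{R}^N\setminus \overline{B}}-1_{B}\bigr)\quad\text{on }\partial U_n ,
\]
where $B$ is the optimal ball for $U_n$: the asymmetry term differentiates to a characteristic-function jump across $\partial B$. In the relevant regime $\mathcal{A}(U_n)\approx\eps_n$ the coefficient $\mu_n$ is \emph{not} negligible relative to $c_n$, so the normal derivative of $u_n$ is genuinely discontinuous where $\partial U_n$ crosses $\partial B$. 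By elliptic regularity this is incompatible with $\partial U_n\in C^{1,\gamma}$, hence also with the $C^{2,\gamma}$ smallness needed to run a Dambrine/Fuglede second-order expansion (the error there is controlled by the $C^{2,\gamma}$ norm of the parametrization, not by $C^1$). So your claim that $U_n$ satisfies $|\nabla u_n|=c_n$ with a constant free boundary condition, and that ``careful tracking of scales'' will show the penalization is inactive at small scales, cannot be right: the penalization is precisely what prescribes $\mathcal{A}(U_n)\approx\eps_n$, so it cannot be inactive. The paper's fix is to replace $\mathcal{A}$ with a smoother asymmetry
\[
\alpha(\Omega)=\int_{\Omega\Delta B_1(x_\Omega)}\bigl|1-|x-x_\Omega|\bigr|\,dx,
\]
which dominates $\mathcal{A}^2$, is Lipschitz with respect to $L^1$ perturbations, and whose shape derivative is a \emph{continuous} function on $\partial U_n$; this yields a $C^{k}$ transmission condition $q_{u_n}$ and allows the Alt--Caffarelli / Kinderlehrer--Nirenberg bootstrap up to $C^{2,\gamma}$.

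\textbf{Working directly with $\lambda(\Omega)$ rather than with the torsion energy.} You set up the selection problem for $\lambda$. The paper first uses the Kohler--Jobin inequality to reduce the quantitative Faber--Krahn inequality (for $\lambda$, and in fact for all $\lambda_{2,q}$) to the quantitative Saint-Venant inequality for $E(\Omega)=-\tfrac12\int|\nabla u_\Omega|^2$. This reduction is not cosmetic: $E$ is a ``min--min'' over open sets and unconstrained functions, so the shape optimization is literally a one-phase Alt--Caffarelli problem in $u$, for which the regularity theory (density estimates, nondegeneracy, Lipschitz bounds, blow-up classification, flatness-implies-regularity) is classical and directly applicable. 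For $\lambda$, the constraint $\|u\|_{L^2}=1$ and the fact that the source $\lambda(\Omega) u$ depends nonlocally on the unknown domain make the free boundary problem substantially harder; the available results (Brian\c{c}on, Brian\c{c}on--Lamboley) handle the pure volume-penalized case but not the extra asymmetry penalization you need. Your proposal therefore asserts regularity that is not off-the-shelf.

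Two further points you omit: the volume penalization cannot be a Lagrange multiplier term for $E$ (by scaling the infimum would be $-\infty$), which is why the paper uses a bilipschitz function $f_\eta(|\Omega|)$ with different slopes on the two sides of $|B|$; and the whole selection argument is run only for sets contained in a fixed ball $B_R$, after which a separate truncation lemma (using the non-sharp stability of Fusco--Maggi--Pratelli and a De Giorgi-type decay estimate for $u_\Omega$ far from the origin) is needed to pass to general sets of finite measure. Your sketch does not address either.
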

\noindent
In this paper we provide a positive answer to the above conjecture.
\vskip.2cm
Let us notice that the previous result is {\it sharp}, since the power $2$ on the asymmetry can not be replaced by any smaller power.
Indeed one can verify that the family of ellipsoids
\[
\Omega_{\varepsilon}=\Big\{(x',x_N)\in \mathbb{R}^{N}\, :\, |x'|^2+(1+\varepsilon)\,x_N^2\le 1\Big\},\qquad 0<\varepsilon\ll 1,
\]
are such that
\[
\mathcal{A}(\Omega_\varepsilon)\simeq \varepsilon\qquad \mbox{ and }\qquad |\Omega_\varepsilon|^{2/N}\,\lambda(\Omega_\varepsilon)- |B|^{2/N}\,\lambda(B)\simeq \varepsilon^2 .
\]
We mention that the following weaker version of \eqref{BWN} was already known, 
\[
|\Omega|^{2/N}\,\lambda(\Omega)-|B|^{2/N}\,\lambda(B)\ge\sigma\,\begin{cases}
\mathcal{A}(\Omega)^3,& N=2,\\
\mathcal{A}(\Omega)^4,& N\ge 3,
\end{cases}
\]
obtained by Bhattacharya \cite{Bha} (for the case $N=2$) and more recently by Fusco, Maggi and Pratelli in \cite{FMP1} for the general case. For ease of completeness, we also mention 
\cite{Po} and \cite{Sni} for similar partial results and some probabilistic applications.

\subsection{The result of this paper}
Actually, we are going to prove a slightly more general version of \eqref{BWN}.
To state our result, let us consider the following optimal Poincar\'e-Sobolev constants for the embedding \(W^{1/2}_0(\Omega)\hookrightarrow L^{q}(\Omega)\)
\begin{equation}
\label{autolavoro}
\lambda_{2,q}(\Omega)=\min_{u\in W^{1,2}_0(\Omega)}\left\{\int_\Omega |\nabla u|^2\, dx\, :\, \|u\|_{L^q(\Omega)}=1\right\},
\end{equation}
where the exponent $q$ satisfies
\begin{equation}
\label{q}
1\le q<2^*:=\left\{\begin{array}{lr}
\displaystyle \frac{2\,N}{N-2},&\qquad \mbox{ if } N\ge 3,\\
&\\
+\infty,& \mbox{ if }N=2.\\
\end{array}
\right.
\end{equation}
Of course, when $q=2$ we are back to the principal frequency mentioned at the beginning.
We also point out that for $q=1$, the quantity $1/\lambda_{2,1}(\Omega)$ is usually referred to as the {\it torsional rigidity} of the set $\Omega$. 
Observe that the shape functional $\Omega\mapsto \lambda_{2,q}(\Omega)$ verifies the scaling law
\[
\lambda_{2,q}(t\,\Omega)=t^{N-2-\frac{2}{q}\, N}\, \lambda_{2,q}(\Omega),
\]
the exponent $N-2-2/q\, N$ being negative. In particular, the quantity
\[
|\Omega|^{\frac{2}{N}+\frac{2}{q}-1}\, \lambda_{2,q}(\Omega),
\]
is scaling invariant. 
Still by means of Schwarz symmetrization, the following general family of Faber-Krahn inequalities can be derived 
\begin{equation}
\label{FK}
|\Omega|^{\frac{2}{N}+\frac{2}{q}-1}\, \lambda_{2,q}(\Omega)\ge |B|^{\frac{2}{N}+\frac{2}{q}-1}\, \lambda_{2,q}(B),
\end{equation}
where $B$ is any $N-$dimensional ball. Again, equality in \eqref{FK} is possible if and only if $\Omega$ is a ball. 
The main result of the paper is the following sharp quantitative improvement of \eqref{FK}.
\begin{teoA}
\label{thm:fkstability}
Let $q$ be an exponent verifying \eqref{q}. There exists a constant  \(\sigma_{2,q}\), depending only on the dimension \(N\) and $q$, such that for every open set \(\Omega\subset \mathbb{R}^N\) with finite measure we have
\begin{equation}
\label{eq:fkstability}
|\Omega|^{\frac{2}{N}+\frac{2}{q}-1}\,\lambda_{2,q}(\Omega)-|B|^{\frac{2}{N}+\frac{2}{q}-1}\, \lambda_{2,q}(B)\ge \sigma_{2,q}\, \mathcal{A}(\Omega)^2.
\end{equation}
\end{teoA}
As already mentioned, by choosing \(q=2\) we obtain a  proof of the Bhattacharya-Weitsman and Nadirashvili  Conjecture. 
\par
We also remark that, as explained in \cite[Remark 3.6]{BP}, the above Theorem allows to improve the exponent in the quantitative stability inequality for the second Dirichlet eigenvalue of the Laplacian proved in \cite[Theorem 3.5]{BP}. 

\subsection{Strategy of the proof}
We start recalling the usual strategy used to derive quantitative versions of Faber-Krahn inequalities. As the proof of \eqref{FK} is based on the P\'olya-Szeg\H{o} principle \eqref{PS},  the central core of all the already exhisting stability results is represented by 
Talenti's proof of \eqref{PS} (see \cite{Ta1}). This combines the Coarea Formula, the convexity of the function $t\mapsto t^2$ and 
the standard Isoperimetric Inequality 
\begin{equation}
\label{isosciarpa}
|\Omega|^\frac{1-N}{N}\,P(\Omega)\ge |B|^\frac{1-N}{N}\, P(B),
\end{equation}
applied to the superlevel sets of 
a function $u$ achieving $\lambda_{2,q}(\Omega)$, where $P(\cdot)$ denotes the perimeter of a set. The main idea of the papers \cite{Bha,FMP1,HN} and \cite{Me} is that
 of substituting the classical isoperimetric statement \eqref{isosciarpa} with an improved quantitative 
version. For simply connected sets in dimension $N=2$ or for convex sets one can appeal to the so called {\it Bonnesen inequalities} 
(see \cite{Os}), like in \cite{Bha,HN,Me}. More generally, one can apply the striking recent result of \cite{FMPiso}, proving a sharp quantitative version of \eqref{isosciarpa} valid for every set and every dimension. Then the main difficulty is that of estimating the ``propagation of asymmetry'' 
from the superlevel sets of the optimal function $u$ to the whole domain $\Omega$. This is a very delicate step, which usually results in 
a (non sharp) estimate like the ones recalled above. It is worth mentioning the recent paper \cite{BCFP} for some recent developments on quantitative versions of the P\'olya-Szeg\H{o} principle.
\vskip.2cm
In this paper on the contrary, we use a different strategy. Indeed, the proof of our Main Theorem is based on the \emph{selection principle} introduced by Cicalese and Leonardi in \cite{CL} to give a new proof of the previously recalled quantitative isoperimetric inequality of \cite{FMPiso}.
\par
The  selection principle turns out to be a very flexible technique and after the paper \cite{CL} it has been applied to a wide variety of geometric problems, see for instance \cite{AFM, BDF} and \cite{DM}. Up to now however it has been used only for problems where the main term is given, roughly speaking, by the \empty{perimeter} of \(\Omega\). As we will explain below, this is due to the fact the selection principle highly relies on the regularity theory for sets  minimizing some (perturbed) shape functional. If the dominating term of the functional is given by a area-type term, then well developed techniques in Geometric Measure Theory ensure the desired regularity.
\vskip.2cm
Let us now  explain the main ideas behind our proof. First by an application of the \emph{Kohler-Jobin inequality} (\cite{KJ}) we will show in Section \ref{sec:KJ} that \eqref{eq:fkstability} is implied by the following  inequality
\begin{equation}
\label{energyintro}
E(\Omega)-E(B_1)\ge \sigma\,\mathcal A(\Omega)^2,\qquad\mbox{ for every $\Omega$ such that } |\Omega|=|B_1|,
\end{equation}
where \(\sigma\) is a  dimensional constant and $B_1$ is the ball of radius $1$ and centered at the origin. Here \(E(\Omega)\) is the \emph{energy functional} of \(\Omega\),
\begin{equation}
\label{eq:torsionintro}
E(\Omega)=\min_{u\in W^{1,2}_0(\Omega)} \frac{1}{2}\, \int_\Omega |\nabla u|^2\, dx-\int_\Omega u\, dx= \frac{1}{2}\, \int_\Omega |\nabla u_\Omega|^2\, dx-\int_\Omega u_{\Omega}\, dx,
\end{equation}
where \(u_\Omega\in W^{1,2}_{0}(\Omega)\) is the (unique) function achieving the above minimum.
\par
Suppose now by contradiction that \eqref{energyintro} is false. Since it is pretty easy to see that \eqref{energyintro} can only fail in the small asymmetry regime (i.e. on sets converging in $L^1$ to the ball), we find a sequence of sets \(\Omega_j\) such that
\begin{equation}
\label{contraintro}
|\Omega_j|=|B_1|, \qquad \varepsilon_j:=\mathcal A (\Omega_j)\to 0\qquad \text{and} \qquad E(\Omega_j)-E(B_1)\le \sigma \mathcal A(\Omega_j)^2,
\end{equation}
with \(\sigma\) as small as we wish. We now look for an ``improved''  sequence of sets \(U_j\), still contradicting \eqref{energyintro} and enjoying some additional smoothness properties. In the spirit of Ekeland's variation principle, these sets will be selected through some minimization problem. Roughly speaking we look for sets \(U_j\) which solve the following 
\begin{equation}
\label{intromin}
\min\Big\{E(\Omega)+\sqrt{\varepsilon_j^2+\sigma(\mathcal A(\Omega)-\varepsilon_j)^2}\,:\, |\Omega|=|B_1| \Big\}.
\end{equation}
One can easily show that the sequence \(U_j\) still contradict \eqref{energyintro} and  that \(\mathcal A(U_j)\to 0\) (see Lemma \ref{lm:prop1}). Relying on the minimality  of \(U_j\), one then would like to show that the \(L^1\) convergence to \(B_1\) can be improved to a smooth convergence. If this is the case, then the second order expansion of \(E(\Omega)\) for smooth  nearly spherical sets  done in Section \ref{sec:fuglede}  shows that \eqref{contraintro} cannot hold true if \(\sigma\) is sufficiently small. 
\par
The key point is thus to prove (uniform) regularity estimates for sets solving \eqref{intromin}. For this, first one would like to get rid of volume constraints applying some sort of Lagrange multiplier principle to show that \(U_j\) minimizes
\begin{equation}
\label{lagrange}
E(\Omega)+\sqrt{\varepsilon_j^2+\sigma(\mathcal A(\Omega)-\varepsilon_j)^2}+\Lambda\,|\Omega|.
\end{equation}
Then, taking advantage of the fact that we are considering  a ``min--min'' problem, the previous is equivalent to require that $u_j=u_{U_j}$ minimizes  
\begin{equation}
\label{funzioni}
\begin{split}
\frac 1 2 \int_{\mathbb{R}^N} |\nabla v|^2\,dx -\int_{\mathbb{R}^N} v\, dx +\Lambda\,\big |\{v>0\}\big|+\sqrt{\varepsilon_j^2+\sigma(\mathcal A(\{v>0\})-\varepsilon_j)^2},
\end{split}
\end{equation}
among all functions with compact support.
Since we are now facing a perturbed free boundary type problem, we aim to apply the techniques of Alt and Caffarelli \cite{AC} (see also \cite{B,BL}) to show the regularity of \(\partial U_j=\partial\{u_{j}>0\}\) and  to obtain  the smooth convergence of \(U_j\) to \(B_1\).
\vskip 0.2cm
Even if this will be the general strategy, several non-trivial modifications have to be done to the above sketched proof. First of all, although solutions to \eqref{funzioni} enjoy some mild regularity property, we cannot expect  \(\partial \{u_j>0\}\) to be smooth.  Indeed, by formally computing the optimality condition\footnote{That is differentiating the functional along  perturbation of the form \(v_t=u_j\circ ({\rm Id}+tV)\) where \(V\) is a smooth vector field, see Appendix \ref{sec:shape} and Lemma \ref{lm:EL} below.} of \eqref{funzioni} and assuming that \(B_1\) is the unique optimal ball for \(\{u_j>0\}\) in \eqref{eq:asymmetry}, one gets that \(u_j\) should satisfy
\[
\left|\frac{\partial u_j}{\partial \nu}\right|^2=\Lambda+\frac{\sigma(\mathcal A(\{u_j>0\})-\varepsilon_j)}{\sqrt{\varepsilon_j^2+\sigma(\mathcal A(\{u_j>0\})-\varepsilon_j)^2}}\,\big(1_{\mathbb{R}^N\setminus \overline B_1}-1_{B_1}\big),\qquad \text{on \(\partial \{u_j>0\},\)}
\]
where $1_A$ denotes the characteristic function of a set $A$ and $\nu$ is the outer normal versor.
This means that  the normal derivative of \(u_j\) is discontinuous at points where \(U_j=\{u_j>0\}\) crosses \(\partial B_1\). Since 
classical elliptic regularity implies that if \(\partial U_j\) is \(C^{1,\gamma}\) then \(u_j\in C^{1,\gamma}(\overline{U_j})\), it is clear that the sets \(U_j\) can not enjoy too much smoothness properties.
\par
To overcome this difficulty, inspired by \cite{ATW}, we replace the Fraenkel asymmetry with a new ``distance'' between a set \(\Omega\) and the set of balls, which behaves like a squared \(L^2\) distance between the boundaries (see Definition \ref{defalpha}). In particular it dominates the square of the Fraenkel asymmetry (see Lemma \ref{lem:alphaprop}) and it is differentiable with respect to the variations needed to compute the optimality conditions (see Lemma \ref{lm:EL}).
\par
A second technical difficulty is that no global  Lagrange multiplier principle is available. Indeed, since the energy $E$ is negative and
\[
E(t\, \Omega)=t^{-N-2}\, E(\Omega)\qquad \mbox{ and }\qquad |t\, \Omega|=t^N\, |\Omega|,\qquad t>0,
\]
by a simple scaling argument one sees that the infimum of \eqref{lagrange} is identically  \(-\infty\). Reducing to a priori bounded set and following \cite{AAC}, we can however 
replace the term \(\Lambda\, |\Omega|\) with a term of the form \(f(|\Omega|)\), for a suitable strictly increasing function vanishing when \(|\Omega|=|B_1|\), 
see Lemma \ref{torsionelimitato} below. At this point we are able to perform the strategy described above to obtain \eqref{energyintro} for uniformly bounded sets \(\Omega\), with a constant \( \sigma\) depending on \({\rm diam}(\Omega)\). 
\par
In Section \ref{sec:second} we will finally show how to pass from bounded to general sets. For this last step, 
we will take advantage of the non-optimal quantitative stability inequality proved in \cite{FMP1}.

\section{First step: reduction to the energy functional}
\label{sec:KJ}

For every $\Omega\subset\mathbb{R}^N$ open set with finite measure,  the {\it energy functional}  is defined as 
\begin{equation}
\label{eq:torsion}
E(\Omega)=\min_{u\in W^{1,2}_0(\Omega)} \frac{1}{2}\, \int_\Omega |\nabla u|^2\, dx-\int_\Omega u\, dx.
\end{equation}
The function $u_\Omega$ achieving the above minimum is unique and will be called {\it energy function of $\Omega$}, and it satisfies
\begin{equation}
\label{equationeenergia}
-\Delta u_\Omega =1\quad \mbox{ in }\Omega,\qquad u_\Omega =0,\quad \mbox{ on }\partial\Omega,
\end{equation}
in weak sense.
Multiplying the above equation by \(u_\Omega\) and integrating by parts one sees that
\begin{equation}\label{energyequivalent}
E(\Omega)=-\frac 1 2 \int_\Omega |\nabla u_{\Omega}|^2\,dx=-\frac1 2 \int_\Omega u_{\Omega}\,dx.
\end{equation}
By means of an easy homogeneity argument, we have
\begin{equation}
\label{enrgytorsion}
E(\Omega)=-\frac{1}{2}\, \max_{u\in W^{1,2}_0(\Omega)}\left\{\left(\int_\Omega u\, dx\right)^2\, :\, \|\nabla u\|_{L^2(\Omega)}=1\right\}=-\frac{1}{2}\, \frac{1}{\lambda_{2,1}(\Omega)}.
\end{equation}
In other words $E(\Omega)$ coincides with the opposite of the torsional rigidity of $\Omega$ (up to the multiplicative factor $1/2$). In particular we should pay attention to the fact that $E(\Omega)$ {\it is always a negative quantity}.
Then the Faber-Krahn inequality \eqref{FK} for $q=1$ can now be rewritten 
\begin{equation}
\label{sv}
E(\Omega)\, |\Omega|^{-\frac{N+2}{N}}\ge E(B)\, |B|^{-\frac{N+2}{N}},
\end{equation}
where $B$ is any ball and equality can hold if and only if $\Omega$ itself is a ball. Sometimes we will refer to this inequality as the {\it Saint-Venant inequality}.
\vskip.2cm
The quantity $\lambda_{2,q}$ defined in \eqref{autolavoro} and the energy functional are linked by the following ``isoperimetric'' inequality, due to Marie-Th\'er\`ese Kohler-Jobin (\cite[Theorem 3]{KJ82} and \cite[Th\'eor\`eme 1]{KJ}), see also \cite{Br} for some recent generalizations of this inequality. 
\begin{kj}
Let $q>1$ be an exponent verifying \eqref{q}. For every $\Omega\subset\mathbb{R}^N$ open set with finite measure, we have
\begin{equation}
\label{kj}
\lambda_{2,q}(\Omega)\, (-E(\Omega))^\vartheta\ge \lambda_{2,q}(B)\,(- E(B))^\vartheta,\qquad \mbox{ with }\ \vartheta(q,N)=\left(\displaystyle\frac{1}{q}-\frac{N-2}{2N}\right)\, \frac{2\,N}{N+2},
\end{equation}
where $B$ is any ball. Equality holds in \eqref{kj} if and only if $\Omega$ itself is a ball. 
\end{kj}
The next result shows that quantitative estimates for the energy functional $E$, automatically translate into estimates for the Faber-Krahn inequality.
\begin{prop}
\label{prop:trick}
Let $q>1$ be an exponent verifying \eqref{q}. Suppose that there exists a constant $\sigma_E>0$ such that 
\begin{equation}
\label{goal}
E(\Omega)\, |\Omega|^{-\frac{N+2}{N}}- E(B)\, |B|^{-\frac{N+2}{N}}\ge \sigma_E\, \mathcal{A}(\Omega)^2,
\end{equation}
for every open set $\Omega\subset\mathbb{R}^N$ with finite measure. Then we also have
\[
|\Omega|^{\frac{2}{N}+\frac{2}{q}-1}\,\lambda_{2,q}(\Omega)-|B|^{\frac{2}{N}+\frac{2}{q}-1}\, \lambda_{2,q}(B)\ge \sigma_{2,q}\, \A(\Omega)^2,
\]
for some constant $\sigma_{2,q}>0$ depending only on $\sigma_E$ and $q$.
\end{prop}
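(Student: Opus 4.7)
The plan is to chain the Kohler-Jobin inequality \eqref{kj} with the assumed stability \eqref{goal} for the energy functional. Both sides of the target inequality are scaling invariant under $\Omega\mapsto t\,\Omega$ (as is \eqref{goal}), so I may normalize so that $|\Omega|=|B|=1$ and reduce to proving
\[
\lambda_{2,q}(\Omega)-\lambda_{2,q}(B)\ge \sigma_{2,q}\,\mathcal{A}(\Omega)^2.
\]
Under this normalization, the hypothesis \eqref{goal} reads $-E(\Omega)\le -E(B)-\sigma_E\,\mathcal{A}(\Omega)^2$. Since $E(\Omega)\le 0$ for every admissible set, this automatically forces
\[
t:=\frac{\sigma_E\,\mathcal{A}(\Omega)^2}{-E(B)}\in[0,1),\qquad \frac{-E(\Omega)}{-E(B)}\le 1-t.
\]
Observe that $q<2^*$ combined with $q>1$ yields $\vartheta(q,N)=\bigl(1/q-(N-2)/(2N)\bigr)\cdot 2N/(N+2)>0$ strictly.

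Next, I apply the Kohler-Jobin inequality \eqref{kj} (with the same volume normalization) to obtain
\[
\lambda_{2,q}(\Omega)\ge \lambda_{2,q}(B)\,\left(\frac{-E(B)}{-E(\Omega)}\right)^{\vartheta}\ge \lambda_{2,q}(B)\,(1-t)^{-\vartheta}.
\]
Finally, the function $f(t):=(1-t)^{-\vartheta}-1$ is smooth and convex on $[0,1)$ with $f(0)=0$ and $f'(0)=\vartheta$; hence $f(t)\ge \vartheta\,t$ for all $t\in[0,1)$. Substituting gives
\[
\lambda_{2,q}(\Omega)-\lambda_{2,q}(B)\ge \vartheta\,\lambda_{2,q}(B)\,t=\frac{\vartheta\,\lambda_{2,q}(B)\,\sigma_E}{-E(B)}\,\mathcal{A}(\Omega)^2,
\]
and the constant $\sigma_{2,q}:=\vartheta\,\lambda_{2,q}(B)\,\sigma_E/(-E(B))$ depends only on $N$, $q$ and $\sigma_E$ since $B$ is the fixed unit-volume ball. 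Undoing the volume normalization using scale invariance yields the stated inequality on arbitrary sets.

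There is no real obstacle: the argument is a clean algebraic transfer, in which the convexity of $(1-t)^{-\vartheta}$ pushes the $\mathcal{A}^2$-remainder from $E$ to $\lambda_{2,q}$ uniformly over the whole asymmetry range $\mathcal{A}\in[0,2]$, with no need to split into small- and large-asymmetry regimes. The only subtlety worth flagging is that $t<1$ strictly comes for free from \eqref{goal} together with the sign condition $E(\Omega)\le 0$, so the Kohler-Jobin substitution is legitimate for every admissible $\Omega$.
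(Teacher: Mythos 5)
Your proof is correct, and it takes a genuinely different (and in my view cleaner) route than the paper's. The paper writes $\lambda_{2,q}(\Omega)/\lambda_{2,q}(B)-1\ge (E(B)/E(\Omega))^\vartheta-1$, sets $s=E(B)/E(\Omega)\ge 1$, and linearizes $s^\vartheta-1$ from below using \emph{concavity} of $s\mapsto s^\vartheta$ on the bounded interval $[1,2]$ (chord below graph), which necessitates a case split: the regime $-E(B)\le -2E(\Omega)$ is handled by the chord estimate together with the Saint-Venant inequality $-E(\Omega)\le -E(B)$, while the regime $-E(B)>-2E(\Omega)$ is handled crudely via $\mathcal{A}(\Omega)<2$. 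You instead encode the hypothesis as $-E(\Omega)/(-E(B))\le 1-t$ with $t=\sigma_E\mathcal{A}(\Omega)^2/(-E(B))\in[0,1)$ and exploit \emph{convexity} of $(1-t)^{-\vartheta}$ (tangent at $t=0$ below graph), which gives the required linear lower bound $(1-t)^{-\vartheta}-1\ge\vartheta\,t$ uniformly over the whole admissible range of $t$, with no case split. Both arguments produce an explicit dimensional constant; yours, $\sigma_{2,q}=\vartheta\,\lambda_{2,q}(B)\,\sigma_E/(-E(B))$, is a single clean formula, whereas the paper's is a minimum of two constants from the two cases. The only point worth making explicit in your write-up is the positivity $-E(\Omega)>0$, which is what forces $t<1$ strictly from \eqref{goal} — you do flag this, and it is correct since $E(\Omega)<0$ for any set of positive measure.
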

\begin{proof}
Without loss of generality, let us suppose that $|\Omega|=1$ and let $B$ be a ball having unit measure. By \eqref{kj} 
one obtains
\begin{equation}
\label{cappio}
\frac{\lambda_{2,q}(\Omega)}{\lambda_{2,q}(B)}-1\ge \left(\frac{E(B)}{E(\Omega)}\right)^\vartheta-1.
\end{equation}
By concavity, for every $0<\vartheta\le 1$ we have
\[
t^\vartheta-1\ge (2^{\vartheta}-1)\, (t-1),\qquad t\in[1,2].
\]
From \eqref{cappio} we can easily infer that if $-E(B)\le -2\, E(\Omega)$, then
\[
\frac{\lambda_{2,q}(\Omega)}{\lambda_{2,q}(B)}-1\ge c_\vartheta\, \left(\frac{E(B)}{E(\Omega)}-1\right)\ge \frac{c_\vartheta\,\sigma_E}{-E(B)}\, \mathcal{A}(\Omega)^2,
\]
where in the last inequality we used that $-E(\Omega)\le -E(B)$ by \eqref{sv}. On the other hand, if $-E(B)>-2\, E(\Omega)$, still by \eqref{cappio}
\[
\frac{\lambda_{2,q}(\Omega)}{\lambda_{2,q}(B)}-1\ge 2^\vartheta-1\ge \frac{2^\vartheta-1}{4}\, \mathcal{A}(\Omega)^2,
\]
since $\mathcal{A}(\Omega)<2$.
\end{proof}
\begin{oss}
It is well-known that for $N\ge 3$ we have
\[
\lim_{q\to 2^*} \lambda_{2,q}(\Omega)=\inf\left\{\int_{\mathbb{R}^N} |\nabla u|^2\, dx\, :\ u\in W^{1,2}_0(\Omega),\ \|u\|_{L^{2^*}(\mathbb{R}^N)}=1\right\},
\]
and the latter is the best costant in the Sobolev inequality, a quantity which does not depend on the set $\Omega$. Clearly this implies that the constant $\sigma_{2,q}$ in \eqref{eq:fkstability} must converge to $0$ as $q$ goes to $2^*$. A closer inspection of the proof of Proposition \ref{prop:trick} shows that 
\[
\sigma_{2,q}\simeq 2^{\vartheta(q,N)}-1\simeq (2^*-q),
\]
as $q$ goes to $2^*$. 
The conformal case $N=2$ is a little bit different. In this case we have (see \cite[Lemma 2.2]{RW})
\[
\lim_{q\to+\infty} \lambda_{2,q}(\Omega)=0\qquad \mbox{ and }\qquad \lim_{q\to+\infty} q\, \lambda_{2,q}(\Omega)=8\, \pi\, e,
\] 
for every set $\Omega$. The asymptotic behaviour of the constant $\sigma_{2,q}$ is then given by
\[
\sigma_{2,q}\simeq (2^{\vartheta(q,2)}-1)\, \lambda_{2,q}(B)\simeq \frac{8\, \pi\,e}{q^2},
\]
as $q$ goes to $+\infty$.
\end{oss}

\section{Second step: sharp stability for nearly spherical sets}
\label{sec:fuglede}

In this section we show the validity of a stronger form of \eqref{energyintro} for sets smoothly close to the ball $B_1$ of unit radius and centered at the origin. We start with two definitions.
\begin{definition}\label{almostspherical}
An open bounded set $\Omega\subset\mathbb{R}^N$ is said {\it nearly spherical of class $C^{2,\gamma}$ parametrized by \(\varphi\)}, if there exists $\varphi\in C^{2,\gamma}(\partial B_1)$ with $\|\varphi\|_{L^\infty}\le 1/2$, such that $\partial\Omega$ is represented by
\[
\partial\Omega=\{x\in\mathbb{R}^N\, :\, x=(1+\varphi(y))\,y, \mbox{ for } y\in\partial B_1\}.
\]
\end{definition}
\begin{definition}\label{def:hunmezzo}Given a function \(\varphi:\partial B_1\to \R\) we define 
\[
\|\varphi\|^2_{H^{1/2}(\partial B_1)}:=\int_{\partial B_1}\varphi^2 \,d\mathcal H^{N-1}+\int_{B_1} \left|\nabla  H(\varphi)\right|^2\, dx,
\]
where \( H(\varphi)\) is the $W^{1,2}$ harmonic extension of \(\varphi\), i.e.
\[
\Delta H(\varphi) =0\quad \text{in \(B_1\),}\qquad 
 H(\varphi)=\varphi \quad \text{on \(\partial B_1\).}
\]
\end{definition}
It can be easily proved  that the above norm is equivalent to the classical \(H^{1/2}\) norm and that \(H^{1/2}(\partial B_1)\) is a Hilbert space with this norm. Moreover, thanks to the following Poincar\'e-Wirtinger trace inequality (see for instance  \cite[Section 4]{BDP})
\[
\int_{\partial B_1} \left|w-\mean{\partial B_1} w\right|^2\, d\mathcal H^{N-1}\le \,\int_{B_1} |\nabla w|^2\, dx,\qquad w\in W^{1,2}(B_1),\]
we have 
\begin{equation}
\label{normequiv}
\|\nabla  H( \varphi)\|_{L^2(B_1)}\le \|\varphi\|_{H^{1/2}(\partial B_1)}\le \sqrt{2}\,\|\nabla   H( \varphi)\|_{L^2(B_1)},\quad \mbox{ for every } \, \varphi\,\mbox{ s.\,t. }\int_{\partial B_1} \varphi=0.
\end{equation}
The main result of this section is then the following, where we denote by
\begin{equation}
\label{defbar}
x_\Omega=\frac{1}{|\Omega|}\int_\Omega x\, dx,
\end{equation}
the barycenter of \(\Omega\).
\begin{theorem}
\label{prop:fine}
Let $0<\gamma\le 1$. Then there exists \(\delta_1=\delta_1(N,\gamma)\) such that if \(\Omega\) is a nearly spherical set of class $C^{2,\gamma}$ parametrized by $\varphi$ with 
\[
\|\varphi\|_{C^{2,\gamma}}\le \delta_1,\qquad |\Omega|=|B_1|\qquad \mbox{ and }\qquad x_\Omega=0,
\] 
then
\begin{equation}
\label{ballstab}
E(\Omega)-E(B_1)\ge \frac{1}{32\,N^2}\,\left\|\varphi\right\|^2_{H^{1/2}(\partial B_1)}.
\end{equation}
\end{theorem}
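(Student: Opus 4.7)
The plan is to adapt Fuglede's classical method, originally devised for the perimeter in his stability proof of the isoperimetric inequality, to the torsional energy $E$. Parametrizing $\partial\Omega$ by $x=(1+\varphi(\omega))\omega$ for $\omega\in\partial B_1$, one expands $E(\Omega)-E(B_1)$ in powers of $\varphi$ up to second order, shows that the resulting quadratic form is coercive in the $H^{1/2}(\partial B_1)$ norm once the constraints $|\Omega|=|B_1|$ and $x_\Omega=0$ are taken into account, and absorbs all cubic and higher order remainders using the $C^{2,\gamma}$ smallness of $\varphi$. The appearance of the $H^{1/2}$ norm (rather than the $H^1$ norm present in Fuglede's original theorem) is natural: the first shape derivative of $E$ at $B_1$ is nonlocal and couples to $\varphi$ via the Dirichlet-to-Neumann operator of the ball, not via the boundary curvature.

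\textbf{Integration by parts identity.} Set $W(x)=(1-|x|^2)/(2N)$, which satisfies $-\Delta W\equiv 1$ in $\mathbb{R}^N$. Writing $u_\Omega = W+h_\Omega$ with $h_\Omega$ harmonic in $\Omega$ and $h_\Omega=-W$ on $\partial\Omega$, two applications of Green's formula give
\[
-2E(\Omega)=\int_\Omega W\,dx+\int_{\partial\Omega} W\,\partial_\nu u_\Omega\,d\mathcal{H}^{N-1}.
\]
Since $W\equiv 0$ on $\partial B_1$, subtracting the analogous identity for $B_1$ yields
\[
2\bigl(E(\Omega)-E(B_1)\bigr)=\Bigl(\int_{B_1}W\,dx-\int_\Omega W\,dx\Bigr)-\int_{\partial\Omega} W\,\partial_\nu u_\Omega\,d\mathcal{H}^{N-1}.
\]
The volume-type bracket equals $\frac{1}{2N}\int_{\partial B_1}\varphi^2\,d\mathcal{H}^{N-1}$ modulo $O(\|\varphi\|_{C^0}\|\varphi\|_{L^2}^2)$, after using $|\Omega|=|B_1|$ to eliminate $\int_{\partial B_1}\varphi$ at leading order. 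For the boundary integral, $W|_{\partial\Omega}=-\varphi/N+O(\varphi^2)$, and linearizing $-\Delta u=1$ in $\Omega$, $u=0$ on $\partial\Omega$ around $B_1$ gives $u_\Omega\circ T=u_{B_1}+H(\varphi/N)+O(\|\varphi\|^2)$, where $T(y)=(1+\varphi(y))y$ and $H$ is the harmonic extension of Definition \ref{def:hunmezzo}. Collecting all quadratic contributions (from the expansion of $W|_{\partial\Omega}$, of $\partial_\nu u_\Omega$, and of the surface measure on $\partial\Omega$) then reduces $2(E(\Omega)-E(B_1))$ to a quadratic form of the schematic type
\[
Q(\varphi)=\frac{\alpha_N}{N^2}\int_{\partial B_1}\varphi\,\partial_\nu H(\varphi)\,d\mathcal{H}^{N-1}+\frac{\beta_N}{N^2}\int_{\partial B_1}\varphi^2\,d\mathcal{H}^{N-1},
\]
with positive constants $\alpha_N,\beta_N$, plus a remainder $R(\varphi)$ satisfying $|R(\varphi)|\le C(N)\,\|\varphi\|_{C^{2,\gamma}}\,\|\varphi\|_{H^{1/2}(\partial B_1)}^2$.

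\textbf{Coercivity and remainder.} Decomposing $\varphi=\sum_{k\ge 0}\varphi_k$ into spherical harmonics, the Dirichlet-to-Neumann map acts as $\partial_\nu H(\varphi_k)=k\,\varphi_k$ on $\partial B_1$, so $Q$ diagonalizes with eigenvalues growing linearly in $k$. The constraint $|\Omega|=|B_1|$ forces $\varphi_0=O(\|\varphi\|_{L^2}^2)$ and the barycenter condition $x_\Omega=0$ forces $\varphi_1=O(\|\varphi\|_{L^2}^2)$, so the low modes only contribute at cubic order and can be moved into the remainder. On the complementary subspace spanned by modes $k\ge 2$, the eigenvalues of $Q$ are bounded below by a dimension-dependent constant multiple of the corresponding $H^{1/2}$ eigenvalues, which in view of \eqref{normequiv} gives $Q(\varphi)\ge \frac{1}{16\,N^2}\,\|\varphi\|_{H^{1/2}(\partial B_1)}^2$. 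Choosing $\delta_1=\delta_1(N,\gamma)$ small enough so that the $C^{2,\gamma}$-controlled remainder $R(\varphi)$ is absorbed into half of the leading term yields \eqref{ballstab} with the constant $1/(32\,N^2)$.

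\textbf{Main obstacle.} The technically delicate step is the precise second-order expansion in Step 2 and the verification that the quadratic form has exactly the Dirichlet-to-Neumann structure displayed above. This requires a careful accounting of the linearization of $u_\Omega$ with respect to the domain, of the Jacobian of the surface measure on $\partial\Omega$, and of the several cross terms coming from the product $W|_{\partial\Omega}\cdot\partial_\nu u_\Omega$; only after these combine correctly does the $H^{1/2}$ norm emerge with a quantitative constant. Once this identification is secured, the spherical-harmonic coercivity argument and the cubic remainder bound via the $C^{2,\gamma}$ assumption are standard.
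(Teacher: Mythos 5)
Your proposed route is genuinely different in its entry point from the paper's. The paper invokes Dambrine's second-order Taylor expansion of the shape functional $E$ around $B_1$ (Lemma \ref{dambrin}, proved in Appendix \ref{sec:shape} via the Hadamard formula and the explicit divergence-free flow of Lemma \ref{vectorfield}), whereas you propose to expand directly the Green's-formula identity
\[
2\bigl(E(\Omega)-E(B_1)\bigr)=\Bigl(\int_{B_1}W\,dx-\int_\Omega W\,dx\Bigr)-\int_{\partial\Omega} W\,\partial_\nu u_\Omega\,d\mathcal{H}^{N-1},\qquad W(x)=\frac{1-|x|^2}{2N}.
\]
This identity is correct, and the idea of bypassing shape derivatives is appealing; it avoids the Hadamard/flow machinery. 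The subsequent spherical-harmonic coercivity argument is likewise correct in spirit and is essentially the paper's Steklov eigenvalue argument (Step 1 of the proof of Theorem \ref{prop:fine}) in a different guise: the eigenvalue $k$ of $\partial_\nu H(\cdot)$ on degree-$k$ harmonics, compared with the eigenvalue $1+k$ of the $H^{1/2}$ norm, yields coercivity on modes $k\ge 2$, with the $k=0,1$ modes suppressed up to quadratic order by the volume and barycenter constraints, exactly as in Step 2 of the paper.

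However, there are two genuine problems. First, your claimed quadratic form $Q(\varphi)=\frac{\alpha_N}{N^2}\int_{\partial B_1}\varphi\,\partial_\nu H(\varphi)+\frac{\beta_N}{N^2}\int_{\partial B_1}\varphi^2$ with \emph{positive} $\alpha_N,\beta_N$ has the wrong sign on $\beta_N$. Carrying out the expansion of your bracket and of $\int_{\partial\Omega}W\partial_\nu u_\Omega$ (using $W|_{\partial\Omega}=-\varphi/N+O(\varphi^2)$, $\partial_\nu u_{B_1}=-1/N$, the shape derivative $\dot u=H(\varphi/N)$, the Hessian $\nabla^2 u_{B_1}=-N^{-1}\mathrm{Id}$, and the surface Jacobian $(1+\varphi)^{N-1}$), and then invoking the constraint $\int\varphi=-\frac{N-1}{2}\int\varphi^2+O(\varphi^3)$ to eliminate the linear term, one lands exactly on the paper's form $\frac{1}{N^2}\bigl(\int_{\partial B_1}\varphi\,\partial_\nu H(\varphi)-\int_{\partial B_1}\varphi^2\bigr)$, i.e.\ $\alpha_N=1$ and $\beta_N=-1$. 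The negativity of $\beta_N$ is not cosmetic: it is precisely what makes the Hessian degenerate on the $k=0$ and $k=1$ modes (translations and dilations of the ball leave $E$ stationary to second order), so the constraints are structurally necessary, not merely technically convenient; a form with $\beta_N>0$ would be coercive on all of $H^{1/2}$ and the whole constraint discussion would be redundant, a clear internal inconsistency in your write-up. Second, and more importantly, you explicitly defer the entire second-order expansion and remainder estimate to a ``main obstacle'' paragraph without carrying it out. This is the technical heart of the result: in the paper it occupies all of Appendix \ref{sec:shape}, requiring a carefully constructed volume-preserving flow, Schauder estimates for the pulled-back equation, and the delicate estimate \eqref{evai} that controls $|e''(t)-e''(0)|$ by $\widetilde\omega(\|\varphi\|_{C^{2,\gamma}})\|X_\varphi\cdot\nu_{B_1}\|^2_{H^{1/2}}$. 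Without that estimate the $C^{2,\gamma}$ smallness cannot absorb the remainder into the quadratic term, and the proof is incomplete.
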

The proof of the above Theorem is based on the following Lemma, which is due to Dambrine, see \cite[Theorem 1]{Da}. For the sake of completeness  we give a sketch of its proof in Appendix \ref{sec:shape} at the end of the paper.
\begin{lemma}\label{dambrin}  Let $0<\gamma\le 1$, there exist a modulus of continuity \(\omega\) and a constant \(\delta_2=\delta_2(N,\gamma)\), such that, for every  \(C^{2,\gamma}\) nearly spherical set \(\Omega\) parametrized by \(\varphi\) with \(\|\varphi\|_{C^{2,\gamma}}\le \delta_2\) and \(|\Omega|=|B_1|\), we have
\begin{equation}\label{taylor}
E(\Omega)\ge E(B_1)+\frac 1 2\, \partial ^2 E(B_1)[\varphi,\varphi]-\omega \big(\|\varphi\|_{C^{2,\gamma}}\big)\,\|\varphi\|^2_{H^{1/2}(\partial B_1)},
\end{equation}
where, for every \(\varphi \in H^{1/2}(\partial B_1)\) we set
\begin{equation}
\label{hessian}
\partial ^2 E(B_1)[\varphi,\varphi]:=\frac {1}{N^2}\Big( \int_{ B_1} |\nabla H(\varphi)|^2\,dx-\int_{\partial B_1}\varphi^2\,d\mathcal H^{N-1}\Big).
\end{equation}
\end{lemma}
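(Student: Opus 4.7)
\smallskip

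\textbf{Plan of proof.} The target inequality will be obtained by combining the Taylor expansion provided by Lemma \ref{dambrin} with a coercivity estimate for the quadratic form \(\partial^2 E(B_1)\) on the set of admissible perturbations. More precisely, I will show that under the volume and barycenter constraints, there exists a dimensional constant \(c_0\) (which the computation below gives as \(c_0=1/(8N^2)\)) such that
\[
\tfrac{1}{2}\,\partial^2 E(B_1)[\varphi,\varphi]\ge c_0\,\|\varphi\|^2_{H^{1/2}(\partial B_1)},
\]
whenever \(\|\varphi\|_{C^{2,\gamma}}\) is small enough. Plugging this into \eqref{taylor} and choosing \(\delta_1\) so small that \(\omega(\delta_1)\le c_0/2\) will give \(E(\Omega)-E(B_1)\ge (c_0/2)\|\varphi\|^2_{H^{1/2}}\ge (1/(32N^2))\|\varphi\|^2_{H^{1/2}}\), as required.

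\smallskip

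\textbf{Spherical harmonic computation.} Decompose \(\varphi=\sum_{k\ge 0}\varphi_k\) into spherical harmonics on \(\partial B_1\). The harmonic extension of \(\varphi_k\) is \(H(\varphi_k)(x)=|x|^k\varphi_k(x/|x|)\), and a direct computation in polar coordinates (using \(-\Delta_{S^{N-1}}\varphi_k=k(k+N-2)\varphi_k\)) yields
\[
\int_{B_1}|\nabla H(\varphi_k)|^2\,dx=k\,\|\varphi_k\|^2_{L^2(\partial B_1)}.
\]
Combined with Parseval's identity, this gives
\[
\partial^2 E(B_1)[\varphi,\varphi]=\frac{1}{N^2}\sum_{k\ge 0}(k-1)\,\|\varphi_k\|^2_{L^2(\partial B_1)},\qquad\|\varphi\|^2_{H^{1/2}(\partial B_1)}=\sum_{k\ge 0}(k+1)\,\|\varphi_k\|^2_{L^2(\partial B_1)}.
\]
The quadratic form is thus negative on modes \(k=0\) (volume) and degenerate on modes \(k=1\) (translations). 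For \(k\ge 2\), the elementary inequality \(k-1\ge (k+1)/3\) shows that the positive modes alone control one third of the \(H^{1/2}\) norm of the higher harmonics.

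\smallskip

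\textbf{Controlling the bad modes.} Writing \(|\Omega|\) and \(x_\Omega\) in polar coordinates via \(\Omega=\{r y:y\in\partial B_1,\ 0\le r\le 1+\varphi(y)\}\), the constraints \(|\Omega|=|B_1|\) and \(x_\Omega=0\) read
\[
\int_{\partial B_1}\big[(1+\varphi)^N-1\big]\,d\mathcal H^{N-1}=0,\qquad \int_{\partial B_1}y_i\,\big[(1+\varphi)^{N+1}-1\big]\,d\mathcal H^{N-1}=0.
\]
Isolating the linear term in each identity and estimating the remainder using \(\|\varphi\|_{L^\infty}\le\delta_1\) gives
\[
\Big|\int_{\partial B_1}\varphi\,d\mathcal H^{N-1}\Big|+\sum_{i=1}^N\Big|\int_{\partial B_1}y_i\,\varphi\,d\mathcal H^{N-1}\Big|\le C_N\,\|\varphi\|_{L^\infty}\,\|\varphi\|^2_{L^2(\partial B_1)}.
\]
Since these integrals coincide (up to multiplicative constants) with \(\|\varphi_0\|_{L^2}\) and the components of \(\varphi_1\), and since \(\|\varphi\|_{L^\infty}\le\|\varphi\|_{C^{2,\gamma}}\) and \(\|\varphi\|^2_{L^2}\le\|\varphi\|^2_{H^{1/2}}\), I obtain
\[
\|\varphi_0\|^2_{L^2(\partial B_1)}+\|\varphi_1\|^2_{L^2(\partial B_1)}\le C_N\,\|\varphi\|^2_{C^{2,\gamma}}\,\|\varphi\|^2_{H^{1/2}(\partial B_1)}.
\]

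\smallskip

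\textbf{Conclusion.} Dropping the \(k=0\) and \(k=1\) terms (the former negative, the latter zero) from the Fourier expansion and using \(k-1\ge (k+1)/3\) for \(k\ge 2\):
\[
\sum_{k\ge 0}(k-1)\|\varphi_k\|^2\ge -\|\varphi_0\|^2+\tfrac{1}{3}\big(\|\varphi\|^2_{H^{1/2}}-\|\varphi_0\|^2-2\|\varphi_1\|^2\big)\ge \tfrac{1}{3}\|\varphi\|^2_{H^{1/2}}-C_N\,\|\varphi\|^2_{C^{2,\gamma}}\,\|\varphi\|^2_{H^{1/2}}.
\]
Choosing \(\delta_1\) small enough so that \(C_N\,\delta_1^2\le 1/12\) forces \(\partial^2 E(B_1)[\varphi,\varphi]\ge (1/(4N^2))\|\varphi\|^2_{H^{1/2}}\), and then further shrinking \(\delta_1\) so that the modulus of continuity in Lemma \ref{dambrin} satisfies \(\omega(\delta_1)\le 1/(32N^2)\) absorbs the error term in \eqref{taylor} and yields \eqref{ballstab}. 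The main subtlety is the linearization of the two nonlinear constraints: once this is carried out, the rest is a clean Fuglede-type orthogonality argument, and the smallness of \(\delta_1\) serves both to linearize the constraints and to control the Taylor remainder.
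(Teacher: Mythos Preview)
Your proposal does not prove Lemma \ref{dambrin}; it proves Theorem \ref{prop:fine} instead. You explicitly \emph{use} Lemma \ref{dambrin} as an input (``combining the Taylor expansion provided by Lemma \ref{dambrin} with a coercivity estimate\ldots''), and your conclusion is \eqref{ballstab}, not \eqref{taylor}. The content of Lemma \ref{dambrin} is precisely the Taylor expansion with $H^{1/2}$-controlled remainder that you are taking for granted; the coercivity of $\partial^2E(B_1)$ on the constrained subspace is a separate downstream step.

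What is actually required for Lemma \ref{dambrin} is a shape-derivative argument: one connects $B_1$ to $\Omega$ by a one-parameter family $\Omega_t=\Phi_t(B_1)$ generated by a divergence-free vector field $X_\varphi$ (so that $|\Omega_t|\equiv|B_1|$ and hence $e'(0)=0$ by Saint--Venant), computes $e''(0)$ explicitly to identify it with $\partial^2E(B_1)[X_\varphi\cdot\nu,X_\varphi\cdot\nu]$, and then---this is the substantive analytic work---shows the uniform bound $|e''(t)-e''(0)|\le\omega(\|\varphi\|_{C^{2,\gamma}})\|X_\varphi\cdot\nu\|_{H^{1/2}}^2$ by pulling back to $B_1$ and using Schauder estimates on the linearized state equation. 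Finally one passes from $X_\varphi\cdot\nu$ to $\varphi$ at the cost of another $\omega(\|\varphi\|)\|\varphi\|_{H^{1/2}}$ error. None of this appears in your write-up. Your spherical-harmonic computation and the linearization of the volume/barycenter constraints are correct and useful, but they belong to the proof of Theorem \ref{prop:fine}, not here.
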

By using this result, we can now prove Theorem \ref{prop:fine}.
\begin{proof}[Proof of Theorem \ref{prop:fine}]
By assumption
\[
|\Omega|=\int_{\partial B_1} \frac{(1+\varphi)^N}{N}\, d\mathcal{H}^{N-1}=|B_1|\qquad\text{and}\qquad x_\Omega=
\int_{\partial B_1} y\, \frac{(1+\varphi)^{N+1}}{N+1}\, d\mathcal{H}^{N-1}=0.
\]
Thanks to  the smallness assumption on $\varphi$ we get 
\begin{equation}
\label{misuraperiferia}
\left|\int_{\partial B_1} \varphi\, d\mathcal{H}^{N-1}\right|=\left|\sum_{h=2}^N {N \choose h} \int_{\partial B_1} \frac{\varphi^h}{N}\, d\mathcal{H}^{N-1}\right|
\le C\, \int_{\partial B_1} \varphi^2\, d\mathcal{H}^{N-1}\le\, C\, \delta_1\, \|\varphi\|_{H^{1/2}(\partial B_1)},
\end{equation}
and
\begin{equation}
\label{bariperiferia}
\left|\int_{\partial B_1} y_i\, \varphi\, d\mathcal{H}^{N-1}\right|\le \sum_{h=2}^N {N \choose h} \int_{\partial B_1}\left|\frac{\varphi^h}{N+1}\right|\, d\mathcal{H}^{N-1}\le C\, \delta_1\, \|\varphi\|_{H^{1/2}(\partial B_1)},
\end{equation}
where \(C=C(N)\) is a dimensional constant. Thus we obtain that \(\varphi\) belongs to \(\mathcal M_{C\delta_1}\), where we define 
\[
\mathcal{M}_{\delta}:=\left\{\xi\in H^{1/2}(\partial B_1)\, :\, \left|\int_{\partial B_1} \xi\, d\mathcal{H}^{N-1}\right|+\left|\int_{\partial B_1} x\, \xi\, d\mathcal{H}^{N-1}\right|\le \delta\, \|\xi\|_{H^{1/2}(\partial B_1)}\right\}.
\]
By Lemma \ref{dambrin}, if $\delta_1\le \delta_2$ we can infer
\begin{equation}
\label{kebab}
E(\Omega)\ge E(B_1)+\frac 1 2 \partial ^2 E(B_1)[\varphi,\varphi]-\omega\big(\|\varphi\|_{C^{2,\gamma}}\big)\|\varphi\|^2_{H^{1/2}(\partial B_1)}.
\end{equation}
We now claim the following:   there exists \(\widehat \delta=\widehat \delta(N)>0\) such that if \(\delta\le\widehat \delta\) then
\begin{equation}
\label{conclude}
 \partial ^2 E(B_1)[\xi,\xi]\ge \frac{1}{8\,N^2}\|\xi\|^2_{H^{1/2}(\partial B_1)},\qquad \mbox{ for every } \, \xi \in \mathcal M_{\delta}.
\end{equation}
By choosing \(\delta_1\ll \min\{\widehat \delta,\delta_2\}\) sufficiently small it is clear that \eqref{conclude} together with \eqref{kebab} concludes the proof of \eqref{ballstab}. We are thus  left to prove \eqref{conclude}, which will be done in the  two steps below.
\vskip.2cm\noindent
\(\bullet\)\,{\it Step 1:} Let \(\mathcal M_0\) be 
\[
\mathcal{M}_0=\left\{\xi\in H^{1/2}(\partial B_1)\, :\, \int_{\partial B_1} \xi\, d\mathcal{H}^{N-1}=\int_{\partial B_1} x_i\, \xi\, d\mathcal{H}^{N-1}=0,\ i=1,\dots,N\right\},
\]
then
\begin{equation}\label{stek1}
 \partial ^2 E(B_1)[\xi,\xi]\ge \frac{1}{4N^2}\|\xi\|^2_{H^{1/2}(\partial B_1)}, \qquad\mbox{ for every } \xi\in \mathcal M_0.
\end{equation}
To see this, just notice that 
\[
\min\left\{\frac{\displaystyle\int_{B_1} |\nabla H(\xi)|^2\, dx}{\displaystyle \int_{\partial B_1} \xi^2\, d\mathcal{H}^{N-1}}\, :\, \xi\in\mathcal{M}_0\setminus\{0\}\right\}=2,
\]
in every dimension $N\ge 2$. Indeed the above minimum is the Rayleigh quotient of a Stekloff eigenvalue problem on \(\partial B_1\)  which has as associated eigenspace the  homogeneous  harmonic polynomials of degree $2$, see \cite[Section 4]{BDP} and \cite{Mu}. From this,  the definition of \(\partial ^2 E\) \eqref{hessian} and \eqref{normequiv} we get
\[
\partial ^2 E(B_1)[\xi,\xi]\ge \frac{1}{2N^2}\int_{B_1} \big|\nabla H(\xi)\big|^2\,dx\ge \frac{1}{4N^2}\|\xi\|^2_{H^{1/2}(\partial B_1)} \qquad\mbox{ for every } \xi\in \mathcal M_0,
\] 
which is \eqref{stek1}.
\vskip.2cm
\noindent 
\(\bullet\)\,{\it Step 2:} For every \(\xi\) in \(\mathcal M_{\delta}\) let us consider its $L^2$ projection on \(\mathcal M_0^{\perp}\), given by
\[
\Pi(\xi)=a_{0}\, Y_{0}+\sum_{i=1}^N a_{1,i}\, Y_{1,i},
\]
where
\[
Y_{0}=\sqrt{\frac{1}{N\,|B_1|}}\qquad \qquad\qquad Y_{1,i}(x)=\sqrt{\frac{1}{|B_1|}}\, x_i,\quad x\in\partial B_1,\quad i=1,\dots,N.
\]
and
\[
a_{0}=\int_{\partial B_1} \xi\,Y_0\, d\mathcal{H}^{N-1}\qquad \qquad\qquad a_{1,i}=\int_{\partial B_1} \xi\,Y_{1,i}\, d\mathcal{H}^{N-1},\quad i=1,\dots,N.
\]
It is immediate to check that \(\xi-\Pi(\xi)\in \mathcal M_0\). Moreover by Green formula
\begin{equation}
\label{sigaretta1}
\|\xi-\Pi(\xi)\|^2_{H^{1/2}(\partial B_1)}=\, \|\xi\|^2_{H^{1/2}(\partial B_1)}-\|\Pi(\xi)\|^2_{H^{1/2}(\partial B_1)},
\end{equation}
and, by the definition of \(\mathcal M_{\delta}\),
\begin{equation}
\label{sigaretta2}
\|\Pi(\xi)\|^2_{H^{1/2}(\partial B_1)}=a_{0}^2+2\, \sum_{i=1}^N a_{1,i}^2\le C\, \delta^2\, \|\xi\|^2_{H^{1/2}(\partial B_1)}.
\end{equation}
By bilinearity and Step 1, we have
\begin{equation}
\label{odiolosplit}
\begin{split}
\partial^2 E[\xi,\xi]&=\partial^2 E[\xi-\Pi(\xi),\xi-\Pi(\xi)]+2\,\partial^2 E[\xi,\Pi(\xi)]-\partial^2 E[\Pi(\xi),\Pi(\xi)]\\
&\ge \frac{1}{4N^2}\|\xi-\Pi(\xi)\|^2_{H^{1/2}(\partial B_1)} -2\|\xi\|_{H^{1/2}(\partial B_1)}\|\Pi(\xi)\|_{H^{1/2}(\partial B_1)}-\|\Pi(\xi)\|_{H^{1/2}(\partial B_1)}^2,
\end{split}
\end{equation}
where we have used the trivial estimate
\[
\left|\partial^2 E(B_1)[\xi_1,\xi_2]\right|\le \|\xi_1\|_{H^{1/2}(\partial B_1)}\|\xi_2\|_{H^{1/2}(\partial B_1)}\qquad\mbox{ for every } \,\xi_1,\, \xi_2 \in H^{1/2}(\partial B_1).
\]
Equation \eqref{odiolosplit}, together with \eqref{sigaretta1} and \eqref{sigaretta2},  gives 
\[
\partial^2 E[\xi,\xi]\ge \frac{1}{4N^2}\|\xi\|_{H^{1/2}(\partial B_1)}^2-C\,\delta\,\|\xi\|_{H^{1/2}(\partial B_1)}^2, 
\]
from which \eqref{conclude} follows, choosing \(\widehat \delta\) small enough.
\end{proof}

\section{Third step: stability for bounded sets with small asymmetry} 

Throughout the rest of the paper we will denote by $B_R(x_0)$ the ball
\[
B_R(x_0)=\{x\in\mathbb{R}^N\, :\, |x-x_0|<R\}.
\] 
When $x_0$ coincides with the origin, we will simply use the notation $B_R$.
\vskip.2cm
\subsection{Stability via a selection principle} The aim of this section is to prove the validity of the quantitative Saint-Venant inequality for bounded sets with small asymmetry. For this, we need to replace the Fraenkel asymmetry \(\A(\Omega)\) with a smoother asymmetry functional, as explained in the Introduction.
\begin{definition} 
Given a bounded set \(\Omega\subset\mathbb{R}^N\), we define
\begin{equation}\label{defalpha}
\alpha(\Omega)=\int_{\Omega\Delta B_1(x_\Omega)} \big|1-|x-x_\Omega|\big|\, dx,
\end{equation}
where \(x_\Omega\) is the barycenter of \(\Omega\) introduced in  \eqref{defbar}.
Notice that \(\alpha(\Omega)=0\) if and only if \(\Omega\) is a ball of radius \(1\), moreover we can write 
\begin{equation}
\label{alphacomodo}
\alpha(\Omega)=\beta_N+\int_{\Omega} (|x-x_\Omega|-1)\, dx,\quad \mbox{ where }\quad\beta_N=\int_{B_1}\big(1-|x|\big)\, dx.
\end{equation}
\end{definition}Below we summarize the main properties of \(\alpha\).
\begin{lemma}\label{lem:alphaprop}Let \(R\ge 2\), then: 
\begin{enumerate}
\item[(i)] there exists a constant \(C_1=C_1(N)\) such that for every \(\Omega\)
\[
C_1\,\alpha(\Omega)\ge |\Omega\Delta B_1(x_\Omega)|^2;
\]
\item[(ii)] there exists a constant \(C_2=C_2(R)\) such that for every \(\Omega_1, \Omega_2\subset B_R\), we have
\[
|\alpha(\Omega_1)-\alpha(\Omega_2)|\le C_2\,|\Omega_1\Delta \Omega_2|;
\]
\item[(iii)] there exists two constants $\delta_3=\delta_3(N)>0$ and $C_{3}=C_3(N)>0$ such that for every nearly spherical set $\Omega$ with $\|\varphi\|_{L^\infty}\le \delta_3$, we have
\[
\alpha(\Omega)\le C_{3}\, \|\varphi\|^2_{L^2(\partial B_1)}.
\]
\end{enumerate}
\end{lemma}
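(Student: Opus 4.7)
For part (i), I would reduce to $x_\Omega=0$ by translation and denote $A=\Omega\Delta B_1$, $m=|A|$, $d(x)=\bigl|1-|x|\bigr|$. The bathtub principle gives
\[
\alpha(\Omega)=\int_A d\,dx \ge \int_{A^*} d\,dx,
\]
where $A^*=\{d\le t^*\}$ is the spherical shell of the same measure $m$ centered about $\partial B_1$. Since $|\{d\le t\}|\le c_N\,t$ for $t\le 1$, one gets $t^*\ge m/c_N$ in the regime where $m$ is small (which is the only interesting one, and is guaranteed in the applications by $|\Omega|=|B_1|$, whence $m\le 2|B_1|$). A layer-cake estimate then yields $\int_{A^*} d\,dx\ge c'_N\,(t^*)^2\ge c''_N\,m^2$, which is (i).

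For part (ii), the convenient starting point is the rewriting
\[
\alpha(\Omega)=\beta_N+\int_\Omega \bigl(|x-x_\Omega|-1\bigr)\,dx
\]
of \eqref{alphacomodo}. Adding and subtracting $\int_{\Omega_1}(|x-x_{\Omega_2}|-1)\,dx$ splits $\alpha(\Omega_1)-\alpha(\Omega_2)$ into a ``symmetric difference'' piece bounded by $(R+1)\,|\Omega_1\Delta\Omega_2|$ using $x,\,x_{\Omega_2}\in B_R$, and a ``barycenter shift'' piece bounded via the triangle inequality by $|\Omega_1|\cdot|x_{\Omega_1}-x_{\Omega_2}|$. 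A direct computation from $x_{\Omega_i}=|\Omega_i|^{-1}\int_{\Omega_i} x\,dx$, together with the lower volume bound implicit in the applications, gives $|x_{\Omega_1}-x_{\Omega_2}|\le C(R)\,|\Omega_1\Delta\Omega_2|$, and (ii) follows.

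For part (iii), I first translate so as to reduce to $x_\Omega=0$. Since the nearly spherical parametrization $\varphi$ is centered at the origin rather than at $x_\Omega$, this requires a short analysis: the identity $\int_{\partial B_1} y\,d\mathcal H^{N-1}(y)=0$ and the smallness of $\|\varphi\|_{L^\infty}$ yield $|x_\Omega|\le C(N)\,\|\varphi\|_{L^2(\partial B_1)}$, from which one checks that $\Omega-x_\Omega$ is still star-shaped about the origin and admits a parametrization $\tilde\varphi$ with $\|\tilde\varphi\|_{L^\infty}\le C\,\|\varphi\|_{L^\infty}$ and $\|\tilde\varphi\|_{L^2(\partial B_1)}^2\le C\,\|\varphi\|_{L^2(\partial B_1)}^2$ (to first order $\tilde\varphi(z)\approx \varphi(z)-x_\Omega\cdot z$). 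Then in polar coordinates
\[
\alpha(\Omega)=\int_{\partial B_1}\int_{\min(1,\,1+\tilde\varphi(y))}^{\max(1,\,1+\tilde\varphi(y))} |1-r|\,r^{N-1}\,dr\,d\mathcal H^{N-1}(y),
\]
and on the inner interval one has $|1-r|\le|\tilde\varphi(y)|$, the interval has length $|\tilde\varphi(y)|$, and $r^{N-1}\le(3/2)^{N-1}$. Hence each inner integral is at most $C(N)\,\tilde\varphi(y)^2$, and integrating over $\partial B_1$ yields (iii).

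The main obstacle is the re-centering in part (iii): one must verify that shifting by the (small but nonzero) barycenter preserves both the star-shapedness and the quadratic control on the boundary parametrization. Everything else reduces to the bathtub principle, a direct estimate on the barycenter shift, and a routine polar computation.
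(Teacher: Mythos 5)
Your argument follows essentially the same route as the paper for all three parts: a bathtub/rearrangement argument for (i), a split into a symmetric-difference term plus a barycenter-shift term for (ii), and a direct polar computation for (iii). Two remarks are worth making. First, in (ii) no lower volume bound is actually needed: from $x_{\Omega_1}-x_{\Omega_2}=\frac{1}{|\Omega_1|}\bigl(\int_{\Omega_1\setminus\Omega_2}x\,dx-\int_{\Omega_2\setminus\Omega_1}x\,dx\bigr)+\bigl(\frac{1}{|\Omega_1|}-\frac{1}{|\Omega_2|}\bigr)\int_{\Omega_2}x\,dx$ one gets $|\Omega_1|\,|x_{\Omega_1}-x_{\Omega_2}|\le 2R\,|\Omega_1\Delta\Omega_2|$, so the factor $|\Omega_1|$ you multiply by in the barycenter-shift piece cancels the denominator, exactly as in the paper's bound $|\Omega_1\cap\Omega_2|\,|x_{\Omega_1}-x_{\Omega_2}|\le C(R)\,|\Omega_1\Delta\Omega_2|$. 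Second, and more importantly, for (iii) the paper silently reduces to $x_\Omega=0$: its proof writes $\alpha(\Omega)=\int_{\Omega\setminus B_1}(|x|-1)\,dx+\int_{B_1\setminus\Omega}(1-|x|)\,dx$, which is the barycentred formula only under that normalization, and $x_\Omega=0$ is indeed all that is used in the application (Proposition \ref{thm:sel}(ii) guarantees $x_{U_j}=0$). You instead attempt to prove the inequality without this normalization by recentering, and you correctly flag the recentering as the delicate point; but as you sketch it — shifting by $x_\Omega$ and reparametrizing — the star-shapedness of $\Omega-x_\Omega$ about the origin and the control $\|\tilde\varphi\|_{L^2}\lesssim\|\varphi\|_{L^2}$ really use a $C^1$ bound on $\varphi$, not just $\|\varphi\|_{L^\infty}\le\delta_3$ (the $C^{2,\gamma}$ regularity in Definition \ref{almostspherical} gives qualitative smoothness but no quantitative bound). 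This extra step is therefore not fully justified from the stated hypotheses, although in practice it is moot because the lemma is only invoked with $x_\Omega=0$. Apart from this, your polar estimate (bounding the inner radial integral by $C(N)\tilde\varphi^2$ directly, rather than via the paper's elementary polynomial inequalities) is a small but clean simplification.
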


\begin{proof}The proof of (i) can be obtained by a simple rearrangement argument, similar to that used in the proof of \cite[Theorem 2.2]{BDP}. First of all, we can suppose for simplicity that $x_\Omega=0$, then 
\begin{equation}
\label{alpha}
\alpha(\Omega)=\int_{\Omega\setminus B_1} (|x|-1)\, dx+\int_{B_1\setminus\Omega} (1-|x|)\, dx.
\end{equation}
We then introduce the annular regions
\[
T_1=\{x\in\mathbb{R}^N\, :\, 1<|x|<R_1\}\qquad \mbox{ and }\qquad T_2=\{x\in\mathbb{R}^N\, :\, R_2<|x|<1\},
\]
where the two radii $R_1$ and $R_2$ are such that $|T_1|=|\Omega\setminus B_1|$ and $|T_2|=|B_1\setminus\Omega|$, i.e.
\[
R_1=\left(1+\frac{|\Omega\setminus B_1|}{|B_1|}\right)^\frac{1}{N}\qquad \mbox{ and }\qquad R_2=\left(1-\frac{|B_1\setminus\Omega|}{|B_1|}\right)^\frac{1}{N}.
\] 
By using this and the fact that in \eqref{alpha} we are integrating two monotone functions of the modulus, we get
\[
\begin{split}
\alpha(\Omega)&\ge \int_{T_1} (|x|-1)\, dx+\int_{T_2} (1-|x|)\, dx\\
&=\omega_N\, \left[\frac{R_1^{N+1}-1}{N+1}-\frac{R_1^N-1}{N}+\frac{R_2^{N+1}-1}{N+1}-\frac{R_2^N-1}{N}\right]\ge \frac{1}{C_1}\,|\Omega\Delta B_1|^2.
\end{split}
\]
In order to prove (ii), we first notice that by using \eqref{alphacomodo} and triangular inequality, we get
\[
\begin{split}
|\alpha(\Omega_1)-\alpha(\Omega_2)|&\le \left|\int_{\Omega_1} |x-x_{\Omega_1}|\, dx-\int_{\Omega_2}|x-x_{\Omega_2}|\, dx \right|+ |\Omega_1\Delta \Omega_2|\\
&\le\int_{\Omega_1\cap\Omega_2} |x_{\Omega_1}-x_{\Omega_2}|\, dx+\int_{\Omega_1\setminus\Omega_2} |x-x_{\Omega_1}|\, dx\\
&+\int_{\Omega_2\setminus\Omega_1} |x-x_{\Omega_2}|\, dx+|\Omega_1\Delta \Omega_2|.
\end{split}
\]
Finally, by using that
\[
|\Omega_1\cap\Omega_2|\,|x_{\Omega_1}-x_{\Omega_2}|\le C(R)\,|\Omega_1\Delta \Omega_2|,
\]
and that \(|x-x_\Omega|\le 2R\) for every \(x\in B_R\), we can conclude.
\vskip.2cm\noindent
We then prove property (iii), for nearly spherical sets. By definition of $\alpha(\Omega)$
\[
\begin{split}
\alpha(\Omega)&=\int_{\Omega\setminus B_1} (|x|-1)\, dx+\int_{B_1\setminus \Omega} (1-|x|)\, dx\\
&=\int_{\{\varphi\ge 0\}} \frac{(1+\varphi(y))^{N+1}-1}{N+1}\, d\mathcal{H}^{N-1}-\int_{\{\varphi\ge 0\}} \frac{(1+\varphi(y))^{N}-1}{N}\, d\mathcal{H}^{N-1}\\
&+\int_{\{\varphi< 0\}} \frac{(1+\varphi(y))^{N+1}-1}{N+1}\, d\mathcal{H}^{N-1}-\int_{\{\varphi< 0\}} \frac{(1+\varphi(y))^{N}-1}{N}\, d\mathcal{H}^{N-1}.
\end{split}
\]
By observing that
\[
\frac{(1+t)^{N+1}-1}{N+1}\le t+\frac{N}{2}\, t^2,\quad t\in\mathbb{R}\qquad\mbox{ and }\qquad \frac{(1+t)^{N}-1}{N}\ge t+\frac{N-1}{4}\, t^2,\quad |t|\le \frac{3}{2\, (N-2)},
\]
we obtain the estimate.
\end{proof}
This is the main result of this section.
\begin{theorem}
\label{thm:stability_lim}
For every $R\ge 2$, there exist two constants $\widehat\sigma=\widehat \sigma(N,R)>0$ and $\widehat\varepsilon=\widehat \varepsilon(N,R)>0$ such that
\begin{equation}
\label{stability_lim}
E(\Omega)- E(B_1)\ge \widehat \sigma\,\alpha(\Omega), 
\end{equation}
for all sets \(\Omega\) contained in \(B_{R}\) with $|\Omega|=|B_1|$ and $\alpha(\Omega)\le \widehat \varepsilon$. 
\end{theorem}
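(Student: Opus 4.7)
The plan is a selection-principle argument in the spirit of Cicalese--Leonardi, as outlined in the introduction. Fix a small $\sigma>0$ to be chosen at the end. Assuming the theorem fails, we obtain a sequence $\Omega_j\subset B_R$ with $|\Omega_j|=|B_1|$, $\varepsilon_j:=\alpha(\Omega_j)\to 0$, and $E(\Omega_j)-E(B_1)<\sigma\,\alpha(\Omega_j)$. We fix a slightly larger radius $R'>R$ and replace $\Omega_j$ by a minimiser $U_j$ of
$$\Omega\ \mapsto\ E(\Omega)+\sqrt{\varepsilon_j^2+\sigma\,(\alpha(\Omega)-\varepsilon_j)^2},\qquad \Omega\subset B_{R'},\ |\Omega|=|B_1|,$$
whose existence follows from the $L^1$-lower semicontinuity of $E$ on equibounded sets and the $L^1$-Lipschitz property of $\alpha$ given by Lemma \ref{lem:alphaprop}(ii). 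Comparing $U_j$ with both $\Omega_j$ and $B_1$, and using the subadditivity of $t\mapsto\sqrt{\varepsilon_j^2+\sigma t^2}$, one checks that $U_j$ still violates the desired inequality (with a constant comparable to $\sigma$) and that $\alpha(U_j)\to 0$, so by Lemma \ref{lem:alphaprop}(i), up to a translation, $U_j\to B_1$ in $L^1$.

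The second step is to dispense with the volume constraint. Since $E(t\Omega)=t^{-N-2}E(\Omega)$ and $E$ is negative, a linear Lagrange penalty $\Lambda|\Omega|$ would produce an infimum equal to $-\infty$. Following the device of \cite{AAC}, we instead replace the constraint by a penalisation $f(|\Omega|)$, where $f$ is strictly increasing and vanishes at $|B_1|$, chosen so that $U_j$ is a free minimiser over $\{\Omega\subset B_{R'}\}$ of
$$\Omega\ \mapsto\ E(\Omega)+\sqrt{\varepsilon_j^2+\sigma\,(\alpha(\Omega)-\varepsilon_j)^2}+f(|\Omega|).$$
By the variational identity \eqref{eq:torsion}, this is equivalent to saying that the energy function $u_j:=u_{U_j}$ minimises
$$v\ \mapsto\ \frac{1}{2}\int_{B_{R'}}|\nabla v|^2\,dx-\int_{B_{R'}}v\,dx+f\bigl(|\{v>0\}|\bigr)+\sqrt{\varepsilon_j^2+\sigma\,(\alpha(\{v>0\})-\varepsilon_j)^2}$$
over $v\in W^{1,2}_0(B_{R'})$, which is a perturbed Alt--Caffarelli-type free boundary problem.

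Now comes the crucial regularity step. The $\alpha$-term is, by Lemma \ref{lem:alphaprop}(ii), an $L^1$-Lipschitz functional of $\{v>0\}$, hence of lower order with respect to the leading Dirichlet-plus-volume part of the functional. The Alt--Caffarelli theory and its refinements \cite{AC,B,BL} then yield uniform Lipschitz continuity and non-degeneracy of $u_j$ together with uniform $C^{1,\gamma}$ estimates on the free boundary $\partial U_j$; combined with $U_j\to B_1$ in $L^1$, this forces $\partial U_j$ to be a $C^{1,\gamma}$ graph over $\partial B_1$ for $j$ large, and Schauder theory applied to \eqref{equationeenergia} upgrades the convergence to $C^{2,\gamma}$. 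After translating so that $x_{U_j}=0$, the sets $U_j$ are nearly spherical of class $C^{2,\gamma}$ with parametrisations $\varphi_j$ obeying $\|\varphi_j\|_{C^{2,\gamma}(\partial B_1)}\to 0$, and Theorem \ref{prop:fine} combined with Lemma \ref{lem:alphaprop}(iii) gives
$$E(U_j)-E(B_1)\ \ge\ \frac{1}{32\,N^2}\|\varphi_j\|_{H^{1/2}(\partial B_1)}^2\ \ge\ c(N)\,\|\varphi_j\|_{L^2(\partial B_1)}^2\ \ge\ c'(N)\,\alpha(U_j),$$
which contradicts the selection as soon as $\sigma$ is chosen below $c'(N)/2$. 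The main obstacle is precisely this regularity step: the $\alpha$-perturbation is not of standard Alt--Caffarelli form, and the $f$-penalisation must be calibrated so that minimisers sit exactly at volume $|B_1|$ while still fitting inside $B_{R'}$; everything else is either a soft $L^1$-compactness argument or a direct consequence of the results already obtained in Sections \ref{sec:KJ} and \ref{sec:fuglede}.
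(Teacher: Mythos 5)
Your overall architecture matches the paper's: contradiction, Ekeland-type selection, volume penalisation \`a la \cite{AAC}, Alt--Caffarelli regularity, and a final comparison with the Fuglede-type expansion of Theorem~\ref{prop:fine} via Lemma~\ref{lem:alphaprop}(iii). However there are two places where the reasoning, as written, does not hold up.

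First, you claim that because the $\alpha$-term is $L^1$-Lipschitz in $\{v>0\}$ it is ``of lower order with respect to the leading Dirichlet-plus-volume part of the functional.'' This is precisely the misconception the paper warns against (see the Remark following \eqref{quasimin}): an $L^1$-Lipschitz perturbation produces a term of the \emph{same order} as the volume penalty $f_{\widehat\eta}(|\{v>0\}|)$, and quasi-minimality in this sense does not, by itself, yield the regularity you need. What actually makes the argument work is quantitative: the Lipschitz constant of the $\alpha$-perturbation comes multiplied by $\sigma$, and $\sigma$ can be chosen much smaller than the penalisation parameter $\widehat\eta$; every subsequent estimate (Lemmas~\ref{lemma1}, \ref{lemma2}, the non-degeneracy and density bounds) exploits this hierarchy $C_2\sigma\le\widehat\eta/2$ explicitly. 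Without isolating this point, the deduction ``$\alpha$ is $L^1$-Lipschitz $\Rightarrow$ Alt--Caffarelli applies'' is a gap.

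Second, the regularity upgrade is considerably more delicate than ``uniform $C^{1,\gamma}$ on the free boundary, then Schauder on \eqref{equationeenergia} gives $C^{2,\gamma}$.'' Alt--Caffarelli's improvement-of-flatness theorem (\cite[Theorem 8.1]{AC}, Theorem~\ref{thm:ac} here) requires: (a) a flatness hypothesis, which must be extracted from the $L^1$-convergence to $B_1$ via uniform density estimates and Kuratowski convergence of the boundaries (Lemmas~\ref{lm:prop2} and \ref{kuraconv}); and (b) continuity, in fact $C^{k,\gamma}$ regularity, of the free boundary datum $q_{u_j}$. Point (b) is not automatic: it requires writing out the Euler--Lagrange identity \eqref{EL}, and this is exactly where the choice of $\alpha$ over $\mathcal{A}$ (and the differentiability of $\alpha$ under domain perturbations, Lemma~\ref{lm:EL}) is needed. ``Schauder applied to \eqref{equationeenergia}'' improves interior regularity, not the regularity of the free boundary; the $C^{2,\gamma}$ bound on $\partial U_j$ comes from the boundary regularity theory once $q_{u_j}$ is shown smooth (Lemma~\ref{lm:qu}), not from elliptic interior estimates.

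A smaller point: the paper does not first minimise under the constraint $|\Omega|=|B_1|$ and then argue that this constrained minimiser is also a free minimiser of the $f$-penalised functional --- it directly minimises $\mathcal{G}_{\widehat\eta,j}=\mathcal{F}_{\widehat\eta}+\sqrt{\varepsilon_j^2+\sigma^2(\alpha-\varepsilon_j)^2}$ over all $\Omega\subset B_R$, accepting that the minimiser will have $|\Omega_j|\neq|B_1|$ (though close, Lemma~\ref{lm:prop1}(i)) and rescaling at the very end. Your two-step presentation would need a separate argument to show the constrained and unconstrained minimisers coincide, which is not obvious. Likewise, existence itself is not a direct consequence of ``$L^1$-lower semicontinuity of $E$'': the energy is not lower semicontinuous along arbitrary $L^1$-convergent sequences of sets, and the paper has to first produce a minimising sequence with equi-bounded perimeter (Lemma~\ref{existence}) before extracting a limit.
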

In order to prove Theorem \ref{thm:stability_lim}, we argue by contradiction. Up to rename \(\sigma\), we assume  that there exists a sequence of sets \(\widetilde{\Omega}_j\subset B_R\) such that 
\begin{equation}
\label{contradiction}
|\widetilde\Omega_j|=|B_1|,\qquad\eps_j:=\alpha(\widetilde{\Omega}_j)\to 0 \qquad  \text{  while } \qquad
E(\widetilde{\Omega}_j)-E(B_1)\le \sigma^4 \eps_j,
\end{equation}
where \(\sigma<1\) is a suitably small parameter that will be chosen later\footnote{We put \(\sigma^4\) just to simplify some of the computations below.}. The key ingredient is given by the following.
\begin{prop}[Selection Principle]
\label{thm:sel}
Let $R\ge 2$ then there exists $\widetilde\sigma=\widetilde \sigma(N,R)>0$ such that if \(\sigma \le \widetilde\sigma(N,R)\) and \(\widetilde{\Omega}_j\) are as in \eqref{contradiction}, then we can find a sequence of smooth open sets \(U_j\subset B_R\) satisfying: 
\begin{enumerate}
\item[(i)] \(|U_j|=|B_1|\); 
\item[(ii)] \(x_{U_j}=0\);
\item[(iii)] \(\partial U_j\) are converging to \(\partial B_1\) in \(C^k\) for every \(k\);
\item[(iv)] there holds
\begin{equation}
\label{disfottuta}
\limsup_{j \to \infty}\frac{E(U_j)-E(B_1)}{\alpha(U_j)}\le \widetilde{C}\,\sigma,
\end{equation}
for some constant \(\widetilde C=\widetilde C(N,R)\).
\end{enumerate}
\end{prop}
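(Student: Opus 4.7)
The plan is to realize $U_j$ as a minimizer of a penalized shape functional of the type sketched in the Introduction, but with the smooth asymmetry $\alpha$ in place of $\mathcal A$. For each $j$ I define
\[
\mathcal F_j(\Omega):=E(\Omega)+\sqrt{\varepsilon_j^{\,2}+\sigma\,(\alpha(\Omega)-\varepsilon_j)^2},
\]
and I take $U_j$ to be a minimizer of $\mathcal F_j$ over all (quasi-)open sets $\Omega\subset B_{2R}$ with $|\Omega|=|B_1|$. Existence follows from standard compactness of shape functionals under a uniform diameter bound, combined with the lower semicontinuity of $E$ with respect to $\gamma$-convergence and the $L^1$-continuity of $\alpha$ on $B_{2R}$ provided by Lemma \ref{lem:alphaprop}\,(ii). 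Property (i) is built into the admissible class; for (ii) I simply translate $U_j$ so that $x_{U_j}=0$, which is permissible because $\mathcal F_j$ is translation invariant, $|U_j|=|B_1|$ forces $|x_{U_j}|\le R$, and the translated set still lies in $B_{2R}$.

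\textbf{$L^1$ closeness and property (iv) by direct comparison.} Testing the minimality of $U_j$ against $\widetilde\Omega_j$, and using $\alpha(\widetilde\Omega_j)=\varepsilon_j$ together with the contradiction assumption $E(\widetilde\Omega_j)-E(B_1)\le\sigma^4\varepsilon_j$ and the Saint--Venant bound $E(U_j)\ge E(B_1)$, I obtain
\[
\sqrt{\varepsilon_j^{\,2}+\sigma\,(\alpha(U_j)-\varepsilon_j)^2}\le (1+\sigma^4)\,\varepsilon_j.
\]
Squaring and simplifying yields $|\alpha(U_j)-\varepsilon_j|\le\sqrt{3}\,\sigma^{3/2}\varepsilon_j$, so for $\sigma$ small one has $\tfrac12\varepsilon_j\le\alpha(U_j)\le 2\varepsilon_j$; in particular $\alpha(U_j)\to 0$ and $U_j\to B_1$ in $L^1$ by Lemma \ref{lem:alphaprop}\,(i). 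Moreover, since the penalty is bounded below by $\varepsilon_j$, the same chain of inequalities gives $E(U_j)-E(B_1)\le\sigma^4\varepsilon_j\le 2\sigma^4\,\alpha(U_j)$, from which (iv) follows with $\widetilde C$ a fixed dimensional constant (e.g.\ $\widetilde C=2$, since $\sigma\le 1$).

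\textbf{Regularity and $C^k$ convergence: the hard step.} What remains is to upgrade $L^1$ convergence to $C^k$ convergence. The obstruction is the volume constraint: the scaling $E(t\Omega)=t^{-N-2}E(\Omega)$ forbids a global Lagrange multiplier. Following \cite{AAC} and Lemma \ref{torsionelimitato}, I replace $|\Omega|=|B_1|$ by a suitable additive penalization $f(|\Omega|)$ with $f$ strictly increasing and vanishing at $|B_1|$, so that minimizers of the unconstrained problem in $B_{2R}$ coincide with those of the constrained one. Exploiting the min--min structure of $E$, this translates into saying that $u_j:=u_{U_j}$ minimizes, among nonnegative functions with $\overline{\{v>0\}}\subset B_{2R}$,
\[
\frac12\int|\nabla v|^2\,dx-\int v\,dx+f(|\{v>0\}|)+\sqrt{\varepsilon_j^{\,2}+\sigma\,(\alpha(\{v>0\})-\varepsilon_j)^2}.
\]
Thanks to the $L^1$-Lipschitz character of $\alpha$ (Lemma \ref{lem:alphaprop}\,(ii)), the $\alpha$-term is a lower-order, uniformly bounded perturbation of a one-phase Bernoulli functional. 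The Alt--Caffarelli theory \cite{AC} and its extensions in \cite{B,BL} then apply to yield Lipschitz regularity and nondegeneracy of $u_j$ and $C^{1,\gamma}$ regularity of $\partial U_j$ with bounds uniform in $j$. Combined with $L^1$ convergence to $B_1$, this forces $\partial U_j\to\partial B_1$ in $C^{1,\gamma}$; a Schauder bootstrap based on $-\Delta u_j=1$ inside $U_j$ together with the Euler--Lagrange condition of Lemma \ref{lm:EL} then upgrades the convergence to $C^k$ for every $k$, giving (iii). The main difficulty is precisely this regularity analysis: one has to verify that the Alt--Caffarelli scheme is robust under the non-local, non-quadratic perturbation coming from $\alpha$ and the volume penalty $f$, all uniformly in $j$. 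Every other item in the statement follows from soft comparison arguments, so this regularity step is the real technical core of the selection principle.
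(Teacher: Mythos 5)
Your overall blueprint (pick $U_j$ as minimizers of a penalized functional involving $\alpha$, verify the soft comparison inequalities, then do Alt--Caffarelli regularity) matches the paper's, but two steps that you treat as routine are in fact where the argument actually lives, and as written one of them is a genuine gap.

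The more serious issue is the claim that by replacing $|\Omega|=|B_1|$ with an additive penalty $f(|\Omega|)$ you obtain an unconstrained problem whose minimizers \emph{coincide} with those of the constrained one. This is not justified, and it is not what happens in the paper. The paper never establishes constrained/penalized equivalence: it minimizes the fully penalized functional $\mathcal G_{\widehat\eta,j}(\Omega)=E(\Omega)+f_{\widehat\eta}(|\Omega|)+\sqrt{\varepsilon_j^2+\sigma^2(\alpha(\Omega)-\varepsilon_j)^2}$ over all $\Omega\subset B_R$ (no volume constraint at all), and the resulting minimizer $\Omega_j$ satisfies only $\bigl||\Omega_j|-|B_1|\bigr|\le C\sigma^4\varepsilon_j$, \emph{not} $|\Omega_j|=|B_1|$. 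The sets $U_j$ in the Proposition are obtained only at the very end by a small dilation $U_j=\lambda_j\Omega_j$ to restore unit volume, after the regularity theory has already been carried out for $\Omega_j$. If instead you minimize over the constrained class and then try to switch to a penalized formulation, you would need an Aguilera--Alt--Caffarelli type argument showing the penalized minimizer has exactly the right volume, which is delicate and not what Lemma \ref{torsionelimitato} gives you (that lemma only identifies $B_1$ as a minimizer of $\mathcal F_{\widehat\eta}$). Working with the constrained problem also leaves you with an uncontrolled Lagrange multiplier in the Euler--Lagrange identity, whereas the whole point of the penalization $f_{\widehat\eta}$ is to avoid that.

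The second weak spot is the existence claim. The constraint $|\Omega|=|B_1|$ is not closed under $\gamma$-convergence: the $\gamma$-limit of a minimizing sequence can have strictly smaller measure, so ``standard compactness of shape functionals'' does not by itself give a minimizer in your admissible class. The paper's existence proof (Lemma \ref{existence}) does real work here: it truncates the energy functions $u_k$ at suitable small levels to extract a minimizing sequence of sets with equibounded perimeter, and it uses the Lipschitz character of $\alpha$ and the $\widehat\eta$-monotonicity of $f_{\widehat\eta}$ to control the mass lost in the limit. That argument, carried out for the penalized functional, is what makes the limit actually attain the infimum. You would need an analogous argument for your constrained class, and it is not clear it would close without again replacing the hard constraint by the penalty.

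Aside from these two points, your derivation of (iv) and of $\alpha(U_j)\to 0$ by testing against $\widetilde\Omega_j$ is the same computation as the paper's (Lemma \ref{lm:prop1}), and you correctly identify the Alt--Caffarelli regularity step (culminating in Lemma \ref{lm:EL}, Lemma \ref{lm:qu} and Theorem \ref{thm:ac}) as the technical core. The translation step to enforce $x_{U_j}=0$ is also fine once one knows $U_j$ is close to a unit ball, since then the translated set automatically lies in $B_R$; the paper does this via Kuratowski convergence in Lemma \ref{kuraconv}.
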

The proof of the Selection Principle is quite involved and will occupy the rest of the section. By combining this result and the stability estimate for nearly spherical sets, we can conclude the proof of Theorem \ref{thm:stability_lim}.
\begin{proof}[Proof of Theorem \ref{thm:stability_lim}] 
As above, arguing by contradiction we can exhibit a sequence of sets $\{U_j\}$ smoothly converging to the ball $B_1$ and having the properties expressed by Proposition \ref{thm:sel}. In particular, for $j\in\mathbb{N}$ large enough each $U_j$ is a nearly spherical set of class $C^{2,\gamma}$, satisfying the hypotheses of Theorem \ref{prop:fine}. The latter, Lemma \ref{lem:alphaprop} (iii) and Proposition \ref{thm:sel} (iv) then give 
\[
\frac{1}{32N^2C_{3}}\le \limsup_{j\to\infty} \frac{E(U_j)-E(B_1)}{\alpha(U_j)}\le \widetilde{C}\, \sigma.
\]
By choosing $\sigma$ suitably small, we get the desired contradiction. 
\end{proof}

\subsection{Proof of the Selection Principle: a penalized minimum problem} 
In order to prove Proposition \ref{thm:sel} above, we would like to use the local regularity theory for free boundary-type problem. As explained in the Introduction,  we need to get rid of the volume constraint $|\Omega|=|B_1|$. To this end we introduce the following function (see \cite{AAC})
\[
f_\eta(s)=
\begin{cases}
\eta(s-\omega_N)\quad &\text{if \(s\le \omega_N\)},\\
(s-\omega_N)/\eta&\text{if \(s\ge \omega_N\)}.
\end{cases}
\]
Notice that the function \(f_\eta\) defined above satisfies the following key property
\begin{equation}
\label{f-prop}
\eta\,(s_1-s_2)\le f_\eta(s_1)-f_\eta(s_2)\le \frac 1 \eta\, (s_1-s_2),
\end{equation}
for every \(0\le s_2\le s_1\).
\begin{lemma}
\label{torsionelimitato}
For every \(R\ge 2\) there exists a \(\widehat\eta=\widehat \eta(R)\) such that, up to translation, \(B_1\) is a minimizer of
\begin{equation}\label{Feta}
\F_{\widehat\eta}(\Omega)=E(\Omega)+f_{\widehat\eta}(|\Omega|),
\end{equation}
among all sets contained in $B_R$. Moreover, there exists a costant $C_4=C_4(N,R)>0$ such that for any other ball $B_r$ with $0\le r\le R$, there holds
\begin{equation}
\label{eq:coercivoraggi}
\F_{\widehat\eta }(B_r)-\F_{\widehat \eta }(B_1)\ge  \frac{|r-1|}{C_4}.
\end{equation}
\end{lemma}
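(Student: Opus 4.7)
The plan is to use the Saint-Venant inequality \eqref{sv} to reduce the problem to a one-dimensional calculation over balls, and then to tune the parameter $\widehat\eta$ appropriately. For any $\Omega\subset B_R$, let $r^{*}=(|\Omega|/\omega_{N})^{1/N}\in[0,R]$, so that $|B_{r^{*}}|=|\Omega|$. Since $f_{\widehat\eta}$ depends only on the volume, \eqref{sv} immediately yields $\mathcal{F}_{\widehat\eta}(\Omega)\ge\mathcal{F}_{\widehat\eta}(B_{r^{*}})$. Because both the energy and the volume are translation-invariant, the ``up to translation'' clause follows once we show that $B_{1}$ minimizes $\mathcal{F}_{\widehat\eta}$ over balls. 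Using the scaling $E(B_{r})=r^{N+2}E(B_{1})$ and setting $e:=-E(B_{1})>0$, this amounts to proving that
\[
g(r)\,:=\,-e\,r^{N+2}+f_{\widehat\eta}(\omega_{N}\,r^{N}),\qquad r\in[0,R],
\]
attains its minimum at $r=1$ (note $g(1)=-e$, since $f_{\widehat\eta}(\omega_{N})=0$) and satisfies $g(r)-g(1)\ge|r-1|/C_{4}$.

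I then split the one-dimensional analysis according to the piecewise definition of $f_{\widehat\eta}$. For $r\in[0,1]$, where $f_{\widehat\eta}(\omega_{N}r^{N})=\widehat\eta\,\omega_{N}(r^{N}-1)$, the telescoping factorizations $1-r^{N+2}\ge(1-r)$ and $1-r^{N}\le N(1-r)$ give
\[
g(r)-g(1)\,=\,e\,(1-r^{N+2})-\widehat\eta\,\omega_{N}\,(1-r^{N})\,\ge\,(1-r)\,\bigl(e-\widehat\eta\,\omega_{N}\,N\bigr).
\]
For $r\in[1,R]$, where $f_{\widehat\eta}(\omega_{N}r^{N})=\omega_{N}(r^{N}-1)/\widehat\eta$, the bounds $r^{N}-1\ge(r-1)$ and $r^{N+2}-1\le(N+2)\,R^{N+1}\,(r-1)$ give
\[
g(r)-g(1)\,=\,\frac{\omega_{N}(r^{N}-1)}{\widehat\eta}-e\,(r^{N+2}-1)\,\ge\,(r-1)\,\Bigl(\frac{\omega_{N}}{\widehat\eta}-e\,(N+2)\,R^{N+1}\Bigr).
\]
Choosing $\widehat\eta=\widehat\eta(N,R)$ small enough that $\widehat\eta\,\omega_{N}\,N\le e/2$ and $e\,(N+2)\,R^{N+1}\le\omega_{N}/(2\widehat\eta)$ simultaneously, both square-bracketed coefficients are positive and bounded below in terms of $N$ and $R$. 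This produces a uniform lower bound $g(r)-g(1)\ge|r-1|/C_{4}(N,R)$ on $[0,R]$, which delivers the minimality of $B_{1}$ and the quantitative estimate \eqref{eq:coercivoraggi} in one stroke.

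The main (and essentially only) obstacle is the simultaneous control of the two slopes of $f_{\widehat\eta}$: the slope $\widehat\eta$ on the lower branch must be small to avoid outweighing the energy gain, while the slope $1/\widehat\eta$ on the upper branch must be large enough to dominate the superlinear growth of $-e\,r^{N+2}$ for radii up to $R$. Both conditions force $\widehat\eta$ to be small, with the $R$-dependence arising only from the upper branch; the hypothesis $R\ge 2$ ensures that both conditions remain compatible and nontrivial. Once this balance is observed, the remaining steps are elementary calculus after Saint-Venant has been invoked.
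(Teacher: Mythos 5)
Your proof is correct and follows essentially the same strategy as the paper's: reduce to balls by symmetrization (the paper invokes P\'olya--Szeg\H{o} directly, you invoke the Saint-Venant inequality \eqref{sv}, which is its immediate consequence), then analyze the one-dimensional function $g(r)=\F_{\widehat\eta}(B_r)$ on $[0,R]$. The only difference is cosmetic: the paper argues via the sign of $g'(r)$ and one-sided limits at $r=1$, whereas your telescoping factorizations of $1-r^{N+2}$, $1-r^N$, $r^{N}-1$, $r^{N+2}-1$ give the global bound $g(r)-g(1)\ge |r-1|/C_4$ in one stroke, which is arguably cleaner.
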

\begin{proof}By using the P\'olya-Szeg\H{o} principle \eqref{PS} it is easily seen that among minimizers of  \(\F_\eta\)  there is a ball of radius \(r(\eta)\le R\). Let us show that we can choose \(\eta\) such that \(r=1\). To  this aim, we introduce
\[
g(r)=\F_\eta(B_r)=r^{N+2}E(B_1)+f_\eta(\omega_N r^N).
\]
Assume that \(1< r\le R\), then
\[
g'(r)=r^{N-1}\,\left((N+2)\, r^2\, E(B_1)+\frac{N\omega_N}{\eta}\right)\ge r^{N-1}\left(-(N+2)\,R^2\, |E(B_1)|+\frac{N\omega_N}{\eta}\right)>0,
\]
if \(\eta\) is small enough.  For \(r\le 1\) we notice that we can easily choose \(\eta\ll1\) such that
\[
r\mapsto g(r),\qquad 0<r\le 1,
\]
admits a minimum in \(r=1\). Moreover it is easy to see that with the above choice of \(\eta\) there exists a constant \(C=C(N,R)\) such that
\[
\lim_{r\to 1^-}g'(r)\le -1/C\quad \lim_{r\to 1^+} g'(r)\ge 1/C,
\] 
from which \eqref{eq:coercivoraggi} follows.
\end{proof}

Up to a translation and a (small) dilation the sets \(U_j\) constructed in Proposition \ref{thm:sel} are given by the family of  minimizers of the following penalized problems
\begin{equation}
\label{mingj}
\min\Big\{\mathcal{G}_{\widehat\eta,j}(\Omega)\ :\  \Omega\subset B_R \Big\}.
\end{equation}
Here the functionals $\mathcal{G}_{\widehat\eta,j}$ are given by
\[
\G_{\widehat\eta,j}(\Omega)=\F_{\widehat \eta} (\Omega)+\sqrt{\eps_j^2+\sigma^2(\alpha(\Omega)-\eps_j)^2}.
\]
Following a by now classical approach, in order to find a minimizer to \eqref{mingj}, we need to extend the functionals \(\G_{\widehat \eta,j}\) to the class of {\em quasi-open } sets.  Referring to \cite[Chapter 4]{BuBu} for a complete account on the theory of these sets, we simply recall here the main facts needed in the sequel. 

A Borel set \(U\) is said {\em quasi-open} if there is a \(W^{1,2}(\mathbb{R}^N)\) function \(u\) such that
\[
U=\{x\, :\, \widetilde u(x)>0\}, 
\]
where \(\widetilde u\) is the precise representative of \(u\),  uniquely defined outside a set of zero capacity, see \cite[Section 4.8]{EG}. Given a quasi-open set \(U\) we can define 
\[
W^{1,2}_0(U)=\Big\{ v\in W^{1,2}(\mathbb{R}^N):\ {\rm Capacity} \left(\{v\ne 0\}\cap (\mathbb{R}^N\setminus U)\right)=0\Big\},
\]
which is a strongly closed and convex subset of \(W^{1,2}\) (hence also weakly closed). Then for a quasi-open set $U$ its \emph{energy} is still defined as
\begin{equation}\label{energyquasiopen}
E(U)=\inf_{v\in W^{1,2}_0(U)} \frac 1  2 \int_U |\nabla v|^2\, dx-\int_U v \, dx.
\end{equation}
The function \(u_U\) achieving the above infimum is still called the \emph{energy function} of \(U\). The following ``minimum principle'' is easily seen to holds true
\begin{equation}
\label{maxprin}
U=\{x\, :\, \widetilde u_U(x)>0\}. 
\end{equation}
We are now ready to prove the following.
\begin{lemma}
\label{existence}
There exists \(\sigma_1=\sigma_1(N,R)>0\) such that if \(\sigma \le \sigma_1\) then the infimum \eqref{mingj} is attained by a quasi-open set \(\Omega_j\). Moreover the perimeter of  \(\Omega_j\) is bounded independently on \(j\).
\end{lemma}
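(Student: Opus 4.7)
The plan is to apply the direct method of the calculus of variations after rewriting the problem in terms of non-negative functions. Using \eqref{energyquasiopen} and \eqref{maxprin}, the minimization of $\mathcal{G}_{\widehat\eta,j}$ over quasi-open subsets of $B_R$ is equivalent to minimizing
\[
\mathcal{J}_j(v) := \tfrac{1}{2}\int_{B_R} |\nabla v|^2\, dx - \int_{B_R} v\, dx + f_{\widehat\eta}\big(|\{v>0\}|\big) + \sqrt{\eps_j^2 + \sigma^2\big(\alpha(\{v>0\}) - \eps_j\big)^2}
\]
over non-negative $v\in W^{1,2}_0(B_R)$, since for any admissible $v$ one has $\mathcal{J}_j(u_{\{v>0\}}) \le \mathcal{J}_j(v)$. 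The infimum is finite: $\mathcal{J}_j(u_{B_1}) \le E(B_1)+2\eps_j$ yields an upper bound, while Poincar\'e's inequality in $B_R$ combined with $f_{\widehat\eta}\ge -\omega_N\widehat\eta$ gives a uniform lower bound. A minimizing sequence $v_k \ge 0$ is thus bounded in $W^{1,2}_0(B_R)$ and, up to extraction, converges weakly in $W^{1,2}$, strongly in $L^2$, and a.e.\ to some $v_\infty \ge 0$.

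Lower semicontinuity of the Dirichlet and linear terms is standard, and $v\mapsto f_{\widehat\eta}(|\{v>0\}|)$ is lsc along the subsequence since $1_{\{v_\infty>0\}}\le \liminf 1_{\{v_k>0\}}$ pointwise a.e.\ while $f_{\widehat\eta}$ is non-decreasing and continuous. The genuine obstacle is the square-root term: the map $t\mapsto \sqrt{\eps_j^2+\sigma^2(t-\eps_j)^2}$ is not monotone in $t$, so mere lower semicontinuity of $\alpha$ is insufficient. To close the argument I need strong $L^1$-convergence of the positivity sets, after which the Lipschitz estimate of Lemma~\ref{lem:alphaprop}(ii) makes $\alpha$ (and hence the square-root term) continuous along the subsequence.

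The $L^1$-compactness will be produced by proving a uniform perimeter bound and then invoking BV compactness. Following the Alt--Caffarelli strategy, I compare an almost-minimizer $\Omega$ with $\Omega\setminus B_r(x_0)$ and $\Omega\cup B_r(x_0)$ at $x_0\in \partial\Omega$. Using \eqref{f-prop} to turn volume variations into a bona fide surface tension of strength $\widehat\eta$, and bounding the $\alpha$-variation via $|\psi'|\le \sigma$ together with Lemma~\ref{lem:alphaprop}(ii) (with constant $C_2$), one obtains density estimates with effective coefficient $\widehat\eta-\sigma C_2$. Choosing $\sigma_1 := \widehat\eta/(2C_2)$ keeps this strictly positive for $\sigma\le \sigma_1$, so the density estimates hold uniformly and translate, by a standard covering argument, into a $j$-independent bound $P(\Omega_j)\le C(N,R)$. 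Technically, the cleanest way to combine these observations is to first minimize the perimeter-penalized functional $\mathcal{J}_j(v)+\delta P(\{v>0\})$ --- for which existence is immediate by BV compactness and $L^1$-continuity of the $\alpha$-term --- and then pass to the limit $\delta\to 0$, using the uniform perimeter bound just described to obtain compactness and to produce a minimizer $\Omega_j$ of \eqref{mingj} satisfying the required perimeter estimate simultaneously.

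The hard core of the argument is thus the density estimate: everything else is routine, but the non-monotonicity of the square-root term in $\alpha$ is what makes the Alt--Caffarelli-type comparison delicate, and it is precisely why the smallness assumption $\sigma\le \sigma_1$ is needed so that the effective surface tension coming from $f_{\widehat\eta}$ is not destroyed by the $\alpha$-perturbation.
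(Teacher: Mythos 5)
Your route is genuinely different from the paper's: instead of density estimates plus a $\delta$-perimeter regularization, the paper uses a coarea-based truncation on the levels of the energy function of an arbitrary minimizing sequence $\mathcal O_k$. Testing near-minimality of $\mathcal O_k=\{u_k>0\}$ against $\{u_k>t_k\}$ with $t_k=1/\sqrt{k}$, the coarea formula yields a level $s_k\le t_k$ for which $P(\{u_k>s_k\})$ is bounded by a constant depending only on $N$ and $R$; the sets $W_k=\{u_k>s_k\}$ are shown to still be minimizing, and one passes to the $L^1$ limit with a short comparison argument (using \eqref{f-prop} and Lemma~\ref{lem:alphaprop}(ii), exactly the $\widehat\eta$ versus $C_2\sigma$ competition you identify) to reconcile $W=\{\widetilde w>0\}$ with the $L^1$ limit $W_\infty$ of the indicators. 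No density estimates, no regularization.

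Your proposal has a concrete gap. First, the Alt--Caffarelli comparisons with $\Omega\setminus B_r(x_0)$ and $\Omega\cup B_r(x_0)$ genuinely require a minimizer (or a quasi-minimizer with vanishing error at all scales). The $k$-th element of a minimizing sequence is only an $\eps_k$-minimizer, so the density estimates and the ensuing perimeter bound degrade as $k\to\infty$; you correctly sense this and fall back on the $\delta$-regularization. But there the step ``existence is immediate by BV compactness'' is not correct as stated: BV compactness gives $1_{\{v_k>0\}}\to 1_A$ in $L^1$, while the semicontinuity of the Dirichlet part only produces a weak limit $v$ with $\{v>0\}\subset A$ a.e., possibly strictly. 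You cannot evaluate the functional at $A$ because $A$ is merely a set of finite perimeter, not quasi-open and not the positivity set of an admissible function, and you cannot evaluate it at $\{v>0\}$ because the perimeter term is only lower semicontinuous towards $P(A)$, not towards $P(\{v>0\})$. In the paper's $\delta=0$ setting the mismatch is closed by the comparison
\[
\widehat\eta\,|A\setminus\{v>0\}|\;\le\; C_2\sigma\,|A\setminus\{v>0\}|,
\]
forcing $|A\setminus\{v>0\}|=0$ when $C_2\sigma<\widehat\eta$. With your extra $\delta P$ term the same comparison only yields
\[
(\widehat\eta-C_2\sigma)\,|A\setminus\{v>0\}|\;\le\;\delta\big(P(\{v>0\})-P(A)\big),
\]
and since a measurable subset of $A$ can have strictly larger perimeter than $A$, the right-hand side can be positive and the conclusion does not follow. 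So the $\delta$-problem existence needs a separate argument (or a different relaxation) that you have not supplied. The truncation mechanism in the paper is precisely what avoids this: it produces bounded perimeter for a minimizing sequence of the \emph{original} problem, so no auxiliary perimeter term ever appears in the functional.
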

\begin{proof} 
Let \(\{\mathcal{O}_k\}_{k\in\mathbb{N}}\subset B_R\) be a minimizing sequence satisfying
\[
\G_{\widehat\eta,j}(\mathcal{O}_k)\le \inf \G_{\widehat\eta,j}+\frac{1}{k},\qquad k\in\mathbb{N}.
\]
Denoting with  \(u_k=\widetilde u_{\mathcal{O}_k}\) the precise representative of the  energy function of \(\mathcal{O}_k\), \eqref{maxprin} yields
\begin{equation}\label{omegak}
\mathcal{O}_k=\{x\, :\, u_k(x)>0\}.
\end{equation} 
Let us set $t_k=1/\sqrt{k}$, then we define
\[
V_k=\{x\, :\, u_k(x)>t_k\}.
\]
Notice that the function \(v_k=(u_k-t_k)_+\) is the energy function for \(V_k\).
By this and by 
\[
\G_{\widehat\eta,j}(\mathcal{O}_k)\le \G_{\widehat\eta,j}(V_k)+1/k,
\]
we infer
\[
\begin{split}
\frac 1  2 \int |\nabla u_k|^2\, dx&-\int u_k\, dx +f_{\widehat\eta}(|\{u_k>0\}|)+\sqrt{\eps_j^2+\sigma^2\,(\alpha(\{u_k>0\})-\eps_j)^2}\\
&\le  \frac 1  2 \int_{\{u_k>t_k\}} |\nabla u_k|^2\, dx-\int_{\{u_k>t_k\}} (u_k-t_k)_+\, dx\\
&+f_{\widehat\eta}(|\{u_k>t_k\}|)+\sqrt{\eps_j^2+\sigma^2\,(\alpha(\{u_k>t_k\})-\eps_j)^2}+\frac 1 k.
\end{split}
\]
Using \eqref{f-prop}, the Lipschitz character of the function $t\mapsto \sqrt{\varepsilon_j^2+\sigma^2\, (t-\varepsilon_j)^2}$ and Lemma \ref{lem:alphaprop} (ii) we obtain
\[
\begin{split}
\frac 1 2 \int_{\{0<u_k<t_k\}}|\nabla u_k|^2\, dx+\widehat \eta\, |\{0<u_k<t_k\}|&\le t_k\, |\{u_k>0\}|+\sigma\, |\alpha(\{u_k>0\})-\alpha(\{u_k>t_k\})|+\frac 1 k \\
&\le t_k\, |\{u_k>0\}|+C_2\,\sigma\, |\{0<u_k<t_k\}|+\frac 1 k.
\end{split}
\]
Choosing \(\sigma\) such that $C_2\,\sigma\le \widehat\eta/2$ we obtain
\[
\frac 1 2 \int_{\{0<u_k<t_k\}}|\nabla u_k|^2\, dx+\frac{\widehat \eta}{2}\, |\{0<u_k<t_k\}|\le |B_R|\, t_k +\frac{1}{k}.
\]
By co-area formula, Cauchy-Schwarz inequality and recalling that \(\widehat \eta <1\), we infer
\[
\begin{split}
\widehat\eta \int_{0}^{t_k}P(\{u_k>s\})\,ds &=  \widehat \eta \int_{\{0<u_k<t_k\}} |\nabla u_k|\, dx\\
 &\le \frac {\widehat \eta} {2} \int_{\{0<u_k<t_k\}}|\nabla u_k|^2\, dx+\frac{\widehat \eta}{2}\, |\{0<u_k<t_k\}|\\
 &\le |B_R|\, t_k +\frac {1}{k}.
\end{split}
\]
By recalling that \(t_k =1/\sqrt{k}\), we can find a level \(0\le s_k\le1/\sqrt{k}\) such that the sets \[W_k=\{x\, :\, u_k(x)>s_k\},\] satisfy
\begin{equation}\label{perimeter}
\begin{split}
P(W_k)\le \frac{2\,\widehat\eta}{\widehat\eta\, t_k}\int_{0}^{t_k}P(\{u_k>s\})\,ds\le \frac{2|B_R|}{\widehat \eta}+\frac{2}{\widehat\eta\, t_k k}= C(N,R)+\frac{2}{\widehat\eta\sqrt k}.
\end{split}
\end{equation}
 We claim that \(W_k\) is still a minimizing sequence. Indeed, using \eqref{f-prop} and Lemma \ref{lem:alphaprop} (ii), for \(\sigma\) such that $C_2\,\sigma\le\widehat\eta/2 $ we have
\begin{equation}
\label{serve}
\begin{split}
\G_{\widehat\eta,j}(W_k)&=\int_{\{u_k>s_k\}}|\nabla u_k|^2\, dx-\int_{\{u_k>s_k\}} (u_k-s_k)_+\, dx\\
&\quad +f_{\widehat \eta}(|\{u_k>s_k\}|)+\sqrt{\eps_j^2+\sigma^2\,(\alpha(\{u_k>s_k\})-\eps_j)^2}\\
&\le \G_{\widehat\eta,j}(\mathcal{O}_k)+s_k\,|\{u_k>0\}|+ f_{\widehat \eta}(|\{u_k>s_k\}|)-f_{\widehat \eta}(|\{u_k>0\}|)\\
&\quad+\sigma\, \big|\alpha(\{u_k>s_k\})-\alpha(\{u_k>0\})\big|\\
&\le \G_{\widehat\eta,j}(\mathcal{O}_k)+|B_R|/\sqrt{k}-(\widehat \eta-C_2\,\sigma)\,  |\{0<u_k<s_k\}|\le \G_{\widehat\eta,j}(\mathcal{O}_k)+|B_R|/\sqrt{k},
\end{split}
\end{equation}
where we used again that $(u_k-s_k)_+$ is the energy function of $W_k$.
\par
By compactness of sets with equi-bounded perimeter, \eqref{perimeter} implies the existence of a Borel set \(W_\infty\) such that 
 \[
 1_{W_k} \to1_{ W_\infty}\ \text{ in \(L^1(B_R)\)}\qquad \text{and}\qquad P(W_\infty)\le C(N,R).
 \]
 Setting $w_k=(u_k-s_k)_+$, it is immediate to see that this is an equi-bounded sequence in $W^{1,2}_0(B_R)$, thus up to subsequences we can infer the existence of $w\in W^{1,2}_0(B_R)$ such that 
\[
\lim_{k\to\infty}\| w_k-w\|_{L^2}=0.
\]
If we set \(W=\{x\, :\, \widetilde w(x)>0\}\), then 
\[
1_{W}(x)\le \liminf_{k\to\infty} 1_{W_k}(x)=1_{W_\infty}(x),\qquad \mbox{ for a.e. }x\in B_R,
\]
which implies \(|W\setminus W_\infty|=0\). By the semicontinuity of the Dirichlet integral and  the continuity of \(\alpha(\cdot)\) with respect to the \(L^1\) convergence of sets, passing to the limit as \(k\) goes to \(\infty\) in \eqref{serve}, we get
\[
\begin{split}
E(W) +f_{\widehat \eta} (|W_\infty|)&+\sqrt{\eps_j^2+\sigma^2(\alpha(W_\infty)-\eps_j)^2}\\
&\le \frac 1 2 \int |\nabla w|^2\,dx-\int w\,dx+ f_{\widehat \eta} (|W_\infty|)+\sqrt{\eps_j^2+\sigma^2(\alpha(W_\infty)-\eps_j)^2}\\
&\le \inf \G_{\widehat\eta,j} \le E(W)+f_{\widehat \eta} (|W|)+\sqrt{\eps_j^2+\sigma^2(\alpha(W)-\eps_j)^2}. 
\end{split}
\]
This in turn gives 
\[
f_{\widehat\eta}(|W_\infty|)-f_{\widehat\eta}(|W|)\le \sigma\, |\alpha(W)-\alpha(W_\infty)|
\]
which together with Lemma \ref{lem:alphaprop} (ii), \eqref{f-prop} and $|W\setminus W_\infty|=0$ yields
\[
\widehat \eta\, |W_\infty\setminus W|\le C_2\,\sigma\, |W_\infty\setminus W|.
\]
Since $C_2\, \sigma\le \widehat\eta/2$, this implies that \(|W\Delta W_\infty|=0\), so that \(W\) is the desired minimizer $\Omega_j$.
\end{proof}

\subsection{Proof of the Selection Principle: properties of the minimizers}

\begin{lemma} [Properties of  minimizers, Part I]\label{lm:prop1} The sequence of minimizers \(\{\Omega_j\}_{j}\) found in Lemma \ref{existence} satisfies the following properties:
\begin{itemize}
\item[(i)] \(|\alpha(\Omega_j)-\eps_j|\le 3\,\sigma \,\eps_j\) and \(\big||\Omega_j|-|B_1|\big|\le C_5\,\sigma^4\, \eps_j\), where \(C_5=C_5(R,N)\);
\item[(ii)] up to translations \(\Omega_j\to B_1\) in \(L^1\);
\item [(iii)] the following inequality holds true
\begin{equation}
\label{deficit}
0\le \F_{\widehat\eta}(\Omega_j)-\F_{\widehat\eta }(B_1)\le \sigma^4 \eps_j.
\end{equation}
\end{itemize}
\end{lemma}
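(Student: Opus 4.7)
The plan is to extract all three properties from a single two-sided inequality obtained by testing the minimality of $\Omega_j$ against the sequence $\widetilde{\Omega}_j\subset B_R$ produced in the contradiction hypothesis \eqref{contradiction}. Since $|\widetilde{\Omega}_j|=|B_1|$ gives $f_{\widehat\eta}(|\widetilde{\Omega}_j|)=0$ and $\alpha(\widetilde{\Omega}_j)=\varepsilon_j$ makes the square-root term collapse to $\varepsilon_j$, the inequality $\mathcal{G}_{\widehat\eta,j}(\Omega_j)\le \mathcal{G}_{\widehat\eta,j}(\widetilde{\Omega}_j)$ reads
\[
\mathcal{F}_{\widehat\eta}(\Omega_j)+\sqrt{\varepsilon_j^2+\sigma^2(\alpha(\Omega_j)-\varepsilon_j)^2}\le E(\widetilde{\Omega}_j)+\varepsilon_j\le E(B_1)+(1+\sigma^4)\,\varepsilon_j.
\]
Combined with the trivial bounds $\mathcal{F}_{\widehat\eta}(\Omega_j)\ge \mathcal{F}_{\widehat\eta}(B_1)=E(B_1)$ (Lemma \ref{torsionelimitato}) and $\sqrt{\varepsilon_j^2+\sigma^2(\alpha(\Omega_j)-\varepsilon_j)^2}\ge \varepsilon_j$, this gives (iii) at once by discarding the square-root term; squaring the residual inequality $\sqrt{\varepsilon_j^2+\sigma^2(\alpha(\Omega_j)-\varepsilon_j)^2}\le (1+\sigma^4)\varepsilon_j$ yields $(\alpha(\Omega_j)-\varepsilon_j)^2\le 3\sigma^2\varepsilon_j^2$ provided $\sigma<1$, hence the first estimate in (i).

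For the volume estimate in (i), I would exploit Saint--Venant together with the coercivity estimate \eqref{eq:coercivoraggi}. Setting $r_j=(|\Omega_j|/\omega_N)^{1/N}$ so that $|B_{r_j}|=|\Omega_j|$, inequality \eqref{sv} gives $E(\Omega_j)\ge E(B_{r_j})$, whence $\mathcal{F}_{\widehat\eta}(\Omega_j)\ge \mathcal{F}_{\widehat\eta}(B_{r_j})$. Combining this with (iii) and \eqref{eq:coercivoraggi} yields
\[
\frac{|r_j-1|}{C_4}\le \mathcal{F}_{\widehat\eta}(B_{r_j})-\mathcal{F}_{\widehat\eta}(B_1)\le \sigma^4\,\varepsilon_j.
\]
For $\sigma$ small enough (so that $r_j\in[1/2,3/2]$), one has $|r_j^N-1|\le C|r_j-1|$, giving the desired bound $\big||\Omega_j|-|B_1|\big|=\omega_N|r_j^N-1|\le C_5\,\sigma^4\varepsilon_j$.

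For (ii), the uniform perimeter bound established in Lemma \ref{existence}, together with $\Omega_j\subset B_R$, yields $L^1$-compactness of $\{1_{\Omega_j}\}$; after translating each $\Omega_j$ suitably, along a subsequence we extract a limit set $\Omega_\infty$ with $|\Omega_\infty|=|B_1|$ by (i). To identify $\Omega_\infty$ with a unit ball I would prove lower semicontinuity of $E$ along this convergence: the energy functions $u_j=u_{\Omega_j}$ are uniformly bounded in $W^{1,2}_0(B_R)$, so a weak limit $u$ exists and vanishes a.e.\ outside $\Omega_\infty$, hence belongs to $W^{1,2}_0(\Omega_\infty)$ in the quasi-open sense recalled around \eqref{energyquasiopen}; consequently $E(\Omega_\infty)\le \frac12\int|\nabla u|^2\,dx-\int u\,dx\le \liminf_j E(\Omega_j)$. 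Combining this with (iii) forces $\mathcal{F}_{\widehat\eta}(\Omega_\infty)=\mathcal{F}_{\widehat\eta}(B_1)$, and the equality case of Saint--Venant then forces $\Omega_\infty$ to be a translate of $B_1$; since every subsequence of $\{\Omega_j\}$ has such a limit, the full sequence converges to $B_1$ in $L^1$ after a suitable translation. The subtlety I expect to be the main technical obstacle is precisely this last lower-semicontinuity step: one must transfer the vanishing of $u_j$ outside $\Omega_j$ to the limit $u$ using only $L^1$-convergence of sets rather than the stronger $\gamma$-convergence, and for this the quasi-open formulation \eqref{energyquasiopen}--\eqref{maxprin} together with the uniform $L^\infty$ and $W^{1,2}$ control coming from the inclusion in $B_R$ are essential.
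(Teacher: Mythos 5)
Your arguments for (iii) and for both halves of (i) are essentially identical to the paper's: you test minimality against $\widetilde\Omega_j$, extract the two-sided chain of inequalities, discard the square-root term to get (iii), square the residual inequality to get the bound on $\alpha(\Omega_j)$, and use P\'olya--Szeg\H{o}/Saint--Venant together with \eqref{eq:coercivoraggi} and \eqref{deficit} for the volume estimate. That part is correct and matches the paper.

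For (ii) you depart from the paper, and the departure introduces a real gap. The paper's argument is much shorter: by part (i), $\alpha(\Omega_j)\le(1+3\sigma)\varepsilon_j\to0$; by the uniform perimeter bound from Lemma \ref{existence}, $\{\Omega_j\}$ is $L^1$-precompact in $B_R$; by Lemma \ref{lem:alphaprop}~(ii), $\alpha$ is Lipschitz with respect to $L^1$ convergence of sets, so any $L^1$-limit $\Omega_\infty$ satisfies $\alpha(\Omega_\infty)=0$, which by definition of $\alpha$ forces $\Omega_\infty$ to be a unit ball. No lower semicontinuity of $E$ is ever invoked. Your route instead passes through ``$u_j\rightharpoonup u$, $u$ vanishes a.e.\ outside $\Omega_\infty$, hence $u\in W^{1,2}_0(\Omega_\infty)$.'' That ``hence'' is not valid: membership in $W^{1,2}_0(\Omega_\infty)$ in the quasi-open sense requires that $\{\widetilde u\ne0\}$ lie in $\Omega_\infty$ up to a set of zero \emph{capacity}, and a.e.\ vanishing (i.e.\ vanishing up to Lebesgue-null sets) is strictly weaker. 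It is precisely for this reason that $L^1$-convergence of $\Omega_j$ does \emph{not} in general imply $E(\Omega_\infty)\le\liminf E(\Omega_j)$ --- one needs $\gamma$-convergence or additional uniform regularity (say, density estimates as in Lemma \ref{lm:prop2}, which at this stage of the paper have not yet been proved). You flag this as the main technical obstacle, but do not resolve it; a clean proof of (ii) should instead use the $L^1$-continuity of $\alpha$ already provided in Lemma~\ref{lem:alphaprop}~(ii), which is exactly the tool designed to make this step immediate.
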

\begin{proof}
We start noticing that by the minimality property of \(\Omega_j\) and by the definition  \eqref{contradiction} of $\widetilde\Omega_j$
\begin{equation}\label{1}
\begin{split}
\F_{\widehat \eta}(\Omega_j)+\eps_j&\le \F_{\widehat \eta}(\Omega_j)+\sqrt{\eps_j^2+\sigma^2(\alpha(\Omega_j)-\eps_j)^2}\\
&=\G_{\widehat\eta,j}(\Omega_j)\le \G_{\widehat\eta,j}(\widetilde \Omega_j)=\F_{\widehat \eta}(\widetilde\Omega_j)+\eps_j \le \F_{\widehat \eta}(B_1)+(1+\sigma^4)\,\eps_j,
\end{split}
\end{equation}
from which we obtain \eqref{deficit}. Moreover, since \(B_1\) minimizes \(\F_{\widehat \eta}\), from the previous we deduce that
\[
\sqrt{\eps_j^2+\sigma^2(\alpha(\Omega_j)-\eps_j)^2}\le \eps_j\,(1+\sigma^4), 
\]
which implies, since \(\sigma<1\),
\[
\eps_j^2+\sigma^2(\alpha(\Omega_j)-\eps_j)^2\le \eps^2_j\, (1+\sigma^4)^2\le \eps_j^2\,(1+3\,\sigma^4).
\]
From this we obtain the first part of point (i). To obtain the second we notice that if \(B_{\Omega_j}\) is a ball of the same measure as \(\Omega_j\), then by the P\`olya-Szeg\H{o} principle
\[
\F_{\widehat\eta}(B_{\Omega_j})\le \F_{\widehat \eta}(\Omega_j),
\]
hence, by \eqref{deficit} and \eqref{eq:coercivoraggi},
\[
\left||\Omega_j|^{1/N}-|B_1|^{1/N}\right|\le C_4\,\omega_N^{1/N}\,\sigma^4 \eps_j.
\]
To prove point (ii) we notice that, up to translations, we can assume that \(x_\Omega=0\). By Lemma \ref{existence} the sets  \(\Omega_j\) have equi-bounded perimeter hence they are pre-compact in \(L^1(B_R)\). By the continuity of \(\alpha(\cdot)\), with respect to the \(L^1\) convergence, and point (i) we see that any limit set \(\Omega_\infty\) satisfies \(\alpha(\Omega_\infty)=0\), from which point (ii) follows.
\end{proof}
\vskip.2cm
We now start studying the regularity of the sets \(\Omega_j\). In order to do this we recall that by \eqref{maxprin} \(\Omega_j=\{u_j>0\}\) where \(u_j=u_{\Omega_j}\) is the energy function of \(\Omega_j\). If \(v\in W^{1,2}_0(B_R)\), testing the minimality of \(\Omega_j\) with \(\{v>0\}\) and recalling the definition of energy \eqref{energyquasiopen}, we immediately see that \(u_j\) satisfies the following minimum property
\begin{equation}\label{umin}
\begin{split}
\frac 1 2 \int |\nabla u_j|^2\, dx&-\int u_j\, dx+f_{\widehat\eta}(|\{u_j>0\}|)+\sqrt{\eps_j^2+\sigma^2\,(\alpha(\{u_j>0\})-\eps_j)^2}\\
&\le \frac 1 2  \int |\nabla v|^2\, dx-\int v\, dx+f_{\widehat \eta}(|\{v>0\}|)+\sqrt{\eps_j^2+\sigma^2\,(\alpha(\{v>0\})-\eps_j)^2}.\\
\end{split}
\end{equation}
Using Lemma \ref{lem:alphaprop}, 
we obtain that \(u_j\) behaves like a perturbed minimum of the free boundary-type problem, more precisely
\begin{equation}\label{quasimin}
\begin{split}
\frac 1 2 \int |\nabla u_j|^2\, dx&-\int u_j\, dx+f_{\widehat\eta}(|\{u_j>0\}|)\\ &\le \frac 1 2  \int |\nabla v|^2\, dx-\int v\, dx+f_{\widehat \eta}(|\{v>0\}|)+C_2\,\sigma\, \big|\{u_j>0\}\Delta \{v>0\}\big|,
\end{split}
\end{equation}
for all \(v\in W^{1,2}_0(B_R)\).
\begin{oss}
The above two equations are the starting point to study  the regularity of \(\partial \Omega_j=\partial \{u_j>0\}\) using the techniques of Alt and Caffarelli, \cite{AC}.  We remark that \eqref{quasimin} can be summarized by saying the \(u_j\) is a \emph{quasi-minimizer} of the free boundary problem, in the spirit of \emph{perimeter quasi-minimizers}, see \cite[Part 3]{Maggi}. However in this kind of problems this notion can not provide too much regularity of \(\partial\{u_j>0\}\), indeed in general the volume term appearing in the right-hand side of \eqref{quasimin} is not lower order. To obtain our results we have to take advantage that the parameter \(\sigma\) multiplying such a term can be taken much smaller 
than \(\widehat \eta\). 
\end{oss}
After \cite{AC} it is by now well understood that the first step in order to prove regularity for solutions of \eqref{umin} is to show that 
\[
u_j(x)\sim \mathrm{dist}\left(x,\partial\{u_j>0\}\right), \qquad x\in\{u_j>0\},
\]
in some integral sense. This will be done in the next two Lemmas, which are the analogous of \cite[Lemma 3.4]{AC} and \cite[Lemma 3.2]{AC}.
\begin{lemma}
\label{lemma1}Let \(u_j\) be as above. There exists \(\sigma_2=\sigma_2(N,R)>0\) such that for every \(\kappa\in(0,1)\) one can find positive constants \(m\), \(\varrho_0\) depending only on \(\kappa\), \(R\) and the dimension,  such that, if \(\sigma\le \sigma_2\),  \(\varrho\le \varrho_0\), \(x_0\in B_R\) and 
\[
\mean{\partial B_{\varrho}(x_0)\cap B_R}u_j\, d\mathcal{H}^{N-1}\le m\, \varrho,
\]
then \(u=0\) in \(B_{\kappa \varrho}(x_0)\cap B_R\).
\end{lemma}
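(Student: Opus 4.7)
The plan is to follow the classical non-degeneracy argument of Alt--Caffarelli \cite[Lemma 3.4]{AC}, adapted to our perturbed quasi-minimality \eqref{quasimin}. Two features must be accommodated: the presence of the source $-\Delta u_j = 1_{\{u_j>0\}}$ means $u_j$ is only superharmonic (rather than harmonic) on its positivity set, which shifts the natural scaling of the volume estimate by a power of $\varrho$; and the $\alpha$-term in \eqref{quasimin} supplies a Lipschitz volume perturbation of strength $C_2\sigma$ (Lemma \ref{lem:alphaprop}(ii)) that must be absorbed by the volume term coming from $f_{\widehat\eta}$. By \eqref{f-prop} this dictates the choice $\sigma_2 := \widehat\eta/(2C_2)$, so that the effective coefficient $\widehat\eta - C_2\sigma$ exceeds $\widehat\eta/2$.

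\textbf{Step 1 (pointwise smallness on a shrunk ball).} Set $\kappa'=(1+\kappa)/2$. Let $h$ be the harmonic extension of $u_j$ from $\partial(B_\varrho(x_0)\cap B_R)$ to $B_\varrho(x_0)\cap B_R$, and let $w_0(x)=(\varrho^2-|x-x_0|^2)_+/(2N)$ be the torsion function of $B_\varrho(x_0)$. Since $-\Delta u_j\le 1$ on $B_\varrho(x_0)$ in the distributional sense, comparison yields $u_j\le h+w_0$. Poisson's representation of $h$ together with the hypothesis $\mean{\partial B_\varrho(x_0)\cap B_R}u_j\le m\varrho$ gives $\sup_{B_{\kappa'\varrho}(x_0)\cap B_R}h\le C(\kappa,N)\,m\,\varrho$, so, for $\varrho_0\le m$,
\[
M:=\sup_{B_{\kappa'\varrho}(x_0)\cap B_R}u_j \le C_*(\kappa,N)\,m\,\varrho.
\]

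\textbf{Step 2 (cutoff competitor).} Let $\eta$ be a smooth radial cutoff with $\eta\equiv 0$ on $B_{\kappa\varrho}(x_0)$, $\eta\equiv 1$ outside $B_{\kappa'\varrho}(x_0)$, and $\|\nabla\eta\|_\infty+\varrho\|\Delta\eta\|_\infty\le C(\kappa)/\varrho$. Set $v=\eta\,u_j\in W^{1,2}_0(B_R)$; then $v\le u_j$ and $\{v>0\}=\{u_j>0\}\setminus B_{\kappa\varrho}(x_0)$. Plugging $v$ into \eqref{quasimin}, applying \eqref{f-prop} to the $f_{\widehat\eta}$-difference, and using $\sigma\le\sigma_2$, we obtain
\[
\frac{\widehat\eta}{2}\,\big|\{u_j>0\}\cap B_{\kappa\varrho}(x_0)\big| \le \int(u_j-v)\,dx + \frac{1}{2}\int\big(|\nabla v|^2-|\nabla u_j|^2\big)\,dx.
\]
A single integration by parts converts the Dirichlet difference into $-\int(1-\eta^2)|\nabla u_j|^2-\int u_j^2\,\eta\,\Delta\eta\le C(\kappa)\,M^2\varrho^{N-2}$, while $\int(u_j-v)=\int(1-\eta)u_j\le C(\kappa)\,M\,\varrho^N$. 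Combined with $M\le C_*\,m\,\varrho$ from Step 1 this yields
\[
\big|\{u_j>0\}\cap B_{\kappa\varrho}(x_0)\big| \le C(\kappa,N,\widehat\eta)\,\big(m\,\varrho^{N+1}+m^2\,\varrho^N\big).
\]

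To upgrade this smallness to identical vanishing I would invoke the companion lower density estimate $|\{u_j>0\}\cap B_r(y)|\ge c(N,\widehat\eta)\,r^N$ at each point $y\in\partial\{u_j>0\}\cap B_R$ and each $r$ below a threshold; this estimate itself follows from \eqref{quasimin} by an analogous Caccioppoli-type argument on concentric balls, or equivalently by iterating Step 2 on dyadic scales. For $m\le m_0(\kappa,N,\widehat\eta)$ sufficiently small, the lower density contradicts the upper bound whenever $B_{\kappa\varrho}(x_0)$ meets $\partial\{u_j>0\}$; so $B_{\kappa\varrho}(x_0)$ is entirely inside either $\{u_j>0\}$ or $\{u_j=0\}$, and the first alternative is ruled out by the mean value hypothesis via comparison with the torsion function. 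The main obstacle is precisely this last passage: because of the source term $-\Delta u_j=1$, the cutoff competitor alone produces only an integral smallness of the ``wrong'' dimension $L^{N+1}$ instead of pure vanishing, so the density lower bound (or a dyadic iteration) is an unavoidable ingredient.
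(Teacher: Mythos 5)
Your Step 1 matches the paper's, and the absorption of the $\alpha$-perturbation via $C_2\sigma\le\widehat\eta/2$ is exactly right. The difficulty lies in Step 2 and in the final passage, and you have correctly identified it yourself: the cutoff competitor $v=\eta u_j$ produces only a \emph{fixed} smallness $|\{u_j>0\}\cap B_{\kappa\varrho}|\le C(m^2\varrho^N+m\varrho^{N+1})$, which is not a self-closing estimate, and the lower density bound you then invoke is \emph{not available} at this stage of the paper. Density estimates (the paper's Lemma \ref{lm:prop2}) are established downstream, precisely as a consequence of the present lemma together with Lemma \ref{lemma2}; the standard derivation of the positive-side lower density bound uses non-degeneracy, which is what Lemma \ref{lemma1} encodes. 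So appealing to that estimate here is circular, and the alternative you gesture at (``Caccioppoli on concentric balls'' or ``dyadic iteration of Step 2'') does not evidently produce a lower bound of the form $|\{u_j>0\}\cap B_r(y)|\ge c\,r^N$ --- Caccioppoli-type inequalities control interior Dirichlet energy by exterior data, not measure from below, and iterating Step 2 only propagates smallness, not positivity.

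The paper avoids this obstacle by a different choice of competitor. Instead of $v=\eta u_j$, take $w$ to solve $-\Delta w=1$ in the annulus $B_{\sqrt{\kappa}\varrho}\setminus B_{\kappa\varrho}$ with $w=\delta_\varrho$ on the outer sphere and $w=0$ on the inner, and set $v=\min\{u_j,w\}$ inside $B_{\sqrt{\kappa}\varrho}$, $v=u_j$ outside. Testing \eqref{quasimin} with this $v$ and then multiplying the equation for $w$ by $(u_j-w)_+$ and integrating by parts converts the Dirichlet difference into a \emph{boundary} term $\int_{\partial B_{\kappa\varrho}}\frac{\partial w}{\partial\nu}u_j$, which is controlled by $\frac{\delta_\varrho+\varrho^2}{\varrho}\int_{\partial B_{\kappa\varrho}}u_j$. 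Crucially, the prefactor scales like $m+\varrho$, so after the $W^{1,1}$ trace inequality (which bounds $\int_{\partial B_{\kappa\varrho}}u_j$ by $\int_{B_{\kappa\varrho}}|\nabla u_j|^2$ and $|\{u_j>0\}\cap B_{\kappa\varrho}|$), the entire right-hand side becomes $(m+\varrho)\cdot C$ times the left-hand side. Choosing $m$ and $\varrho_0$ so that $(m+\varrho)C\le\widehat\eta/4$ then absorbs everything and forces both $\int_{B_{\kappa\varrho}}|\nabla u_j|^2=0$ and $|\{u_j>0\}\cap B_{\kappa\varrho}|=0$ in one shot, with no appeal to density estimates. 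If you want to repair your argument, this is the missing ingredient: replace the cutoff competitor with the Poisson-annulus competitor so that the estimate becomes self-absorbing.
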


\begin{proof} Being \(j\) fixed for notational simplicity we drop the subscript. Morever, being \(x_0\) fixed we simply write \(B_{\varrho}\) for \(B_{\varrho}(x_0)\). 
\par
It is well known that \(u\) (extended to \(0\) outside \(B_R\)) satisfies \(-\Delta u\le 1\) in the weak sense, hence the function
\[
u(x)+\frac{|x-x_0|^2-\varrho^2}{2N},
\]
is subharmonic in \(B_\varrho(x_0)\). Therefore for every \(\kappa \in (0,1)\) there is \(C=C(\kappa,N)\) such that
\begin{equation}
\label{sup}
\begin{split}
\delta_\varrho:=\sup_{B_{\sqrt{\kappa}\varrho}} u &\le C\,\Big( \mean{\partial B_{\varrho}} u\,d\mathcal{H}^{N-1} +\varrho^2\Big)\le C\,(m\varrho+\varrho^2). 
\end{split}
\end{equation}
Let \(w\) be the solution of 
\[
\begin{cases}
-\Delta w=1&\text{in \(B_{\sqrt \kappa \varrho}\setminus B_{ \kappa \varrho}\)},\\
w=\delta_\varrho  &\text{on \(\partial B_{\sqrt \kappa \varrho}\)},\\
w=0 &\text{on \(B_{\kappa \varrho}\)}.
\end{cases}
\]
Since \(w\ge u\) on \(\partial B_{\sqrt{\kappa}\varrho}\), the function 
\[
v=
\begin{cases}
u\quad &\text{on \(\R^N\setminus B_{\sqrt{\kappa}\varrho}\)}\\
\min\{u, w\}&\text{on \(B_{\sqrt{\kappa}\varrho}\)},
\end{cases}
\]
satisfies
\[
\{v>0\} \subset \{u>0\}\qquad \mbox{ and }\qquad \{v>0\}\setminus B_{\sqrt{\kappa}\,\varrho}=\{u>0\}\setminus B_{\sqrt{\kappa}\,\varrho}.
\] 
In particular $v\in W^{1,2}_0(B_R)$ and \eqref{quasimin} gives
\[
\begin{split}
&\frac 1 2  \int_{B_{\sqrt \kappa \varrho}} |\nabla u|^2\, dx-\int_{B_{\sqrt \kappa \varrho}}u\, dx +f_{\widehat\eta} (|\{u>0\}|)\\
&\le 
\frac 1 2 \int_{B_{\sqrt \kappa \varrho}} |\nabla v|^2\, dx-\int_{B_{\sqrt \kappa \varrho}}v\, dx +f_{\widehat \eta} (|\{v>0\}|)+C_2\,\sigma\, \big|(\{u>0\}\setminus \{v>0\})\cap B_{\sqrt \kappa  \varrho}\big|.
\end{split}
\]
Since $v=0$ in $B_{\kappa\varrho}$,  
\[
|\{u>0\}\cap B_{ \kappa \varrho}|\le |(\{u>0\}\setminus \{v>0\})\cap B_{\sqrt \kappa \varrho}|.
\]
Using \eqref{f-prop} and choosing $\sigma>0$ such that \(C_2\,\sigma \le \widehat \eta /2\), the above two equations and the definition of $v$ give
\begin{equation}
\label{freddissimo}
\begin{split}
\frac 1 2  \int_{B_{ \kappa \varrho}}& |\nabla u|^2\, dx-\int_{B_{\kappa\varrho}} u\, dx + \frac{\widehat\eta}{2}\,|\{u>0\}\cap B_{ \kappa \varrho}|\\
&\le \frac 1 2  \int_{B_{ \kappa \varrho}} |\nabla u|^2\, dx-\int_{B_{\kappa\varrho}} u\, dx + \frac{\widehat \eta}{2}\,|(\{u>0\}\setminus \{v>0\})\cap B_{\sqrt \kappa \varrho}|\\
&\le \frac 1 2  \int_{B_{\sqrt \kappa \varrho}\setminus B_{ \kappa \varrho}} \left(|\nabla v|^2-|\nabla u|^2\right)\, dx- \int_{B_{\sqrt \kappa \varrho}\setminus B_{ \kappa \varrho}} (v-u)\, dx\\
&\le   \int_{(B_{\sqrt \kappa \varrho}\setminus B_{ \kappa \varrho})\cap \{u>w\}} \big(|\nabla w|^2-\nabla u \cdot \nabla w\big)\, dx -\int_{(B_{\sqrt \kappa \varrho}\setminus B_{\kappa \varrho})\cap \{u>w\}} (w-u)\, dx.
\end{split}
\end{equation}
Multiplying the equation satisfied by \(w\) by \((u-w)_+\), integrating over \(B_{\sqrt \kappa\varrho}\setminus B_{\kappa \varrho}\) we obtain
\begin{equation}
\label{fredderrimo}
 \int_{(B_{\sqrt \kappa \varrho}\setminus B_{ \kappa \varrho})\cap \{u>w\}} \big(|\nabla w|^2-\nabla u \cdot \nabla w\big)\, dx -\int_{(B_{\sqrt \kappa \varrho}\setminus B_{ \kappa \varrho})\cap \{u>w\}}(u-w)\, dx=\int_{\partial B_{\kappa\varrho}} \frac{\partial w}{\partial\nu}\, u\, d\mathcal{H}^{N-1},
 \end{equation}
since $w\equiv 0$ on $\partial B_{\kappa\varrho}$ and $w\ge u$ on $\partial B_{\sqrt{\kappa}\varrho}$.
An explicit computation gives
\[
\left|\frac {\partial w}{\partial \nu}\right|
\le C(N,\kappa)\, \frac{\delta_{\varrho}+\varrho^2}{\varrho} \qquad\text{on \(\partial B_{\kappa \varrho}\),}
\]
and combining \eqref{freddissimo} and \eqref{fredderrimo} we get
\begin{equation}
\label{freddo}
\frac 1 2  \int_{B_{ \kappa \varrho}} |\nabla u|^2\, dx-\int_{B_{\kappa\varrho}} u\, dx+ \frac{\widehat \eta}{2}\,\left|\{u>0\}\cap B_{ \kappa \varrho}\right| \le C\,\frac{\delta_{\varrho}+\varrho^2}{\varrho}\,\int_{\partial B_{\kappa \varrho}} u\, d\mathcal{H}^{N-1}.
\end{equation}
Now the classical trace inequality in \(W^{1,1}\), \eqref{sup} and Cauchy-Schwarz inequality imply
\[
\begin{split}
\int_{\partial B_{\kappa \varrho}} u\, d\mathcal{H}^{N-1}&\le C(N,\kappa)\,\left(\frac{1}{\varrho}\int_{B_{\kappa \varrho}}u\, dx+\int_{B_{\kappa \varrho}}|\nabla u|\, dx\right)\\
&\le C(N,\kappa)\Bigg(\Big(\frac{\delta_\varrho}{\varrho}+\frac{1}{2} \Big)\,\big|\{u>0\}\cap B_{\kappa \varrho}\big|+\frac{1}{2} \int_{B_{\kappa \varrho}}|\nabla u|^2\, dx\Bigg).
\end{split}
\]
Combining the above estimate with \eqref{freddo}, recalling \eqref{sup} and choosing \(m\) and \(\varrho_0\) such that $(m+\varrho)\, C(N,\kappa)\le \widehat\eta/4$, we obtain
\[
\begin{split}
\frac{\widehat\eta}{2}\,\left(\int_{B_{ \kappa \varrho}} |\nabla u|^2\, dx + |\{u>0\}\cap B_{ \kappa \varrho}|\right)&\le (m+\varrho)\,C\, \left(\int_{B_{ \kappa \varrho}} |\nabla u|^2\, dx +|\{u>0\}\cap B_{ \kappa \varrho}|\right)\\
&\le\frac{\widehat\eta}{4} \left(\int_{B_{ \kappa \varrho}} |\nabla u|^2\, dx +|\{u>0\}\cap B_{ \kappa \varrho}|\right).
\end{split}
\]
This clearly implies \(u=0\) on \(B_{\kappa \varrho}\).
\end{proof}
\begin{lemma}\label{lemma2}Let \(u_j\) be as in Lemma \ref{lemma1}. There exists a constant \(M\) depending only on the dimension and on \(R\), such that  if \(x_0\in B_R\) and 
\begin{equation}\label{M}
\mean{\partial B_{\varrho}(x_0)\cap B_R}u_j\, d\mathcal{H}^{N-1}\ge M \varrho,
\end{equation}
then \(u>0\) in \(B_{\varrho}(x_0)\).
\end{lemma}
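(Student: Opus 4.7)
The plan is to argue by contradiction via a harmonic comparison, following the non-degeneracy strategy of Alt and Caffarelli \cite[Lemma 3.2]{AC}. Fix $j$, write $u=u_j$ and $B_\varrho=B_\varrho(x_0)$, and suppose that $u$ vanishes at some point $y_0\in B_\varrho\cap B_R$ while the trace average satisfies $\mean{\partial B_\varrho\cap B_R}u\,d\mathcal{H}^{N-1}\ge M\varrho$. The goal is to produce a contradiction whenever $M=M(N,R,\widehat\eta)$ is chosen large enough and $\sigma\le\sigma_2$.

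The natural competitor is $v\in W^{1,2}_0(B_R)$ obtained by setting $v=u$ on $\mathbb{R}^N\setminus B_\varrho$ and $v=h$ on $B_\varrho$, where $h$ is the harmonic extension of the boundary trace $u|_{\partial B_\varrho}$. Since $u\ge 0$ on $\partial B_\varrho$ and $-\Delta u\le 1$ distributionally in $B_R$, a comparison of $h-u$ with the torsion function of $B_\varrho$ gives $h\ge u\ge 0$ pointwise in $B_\varrho$; therefore $v\ge u$ and $\{v>0\}\setminus\{u>0\}\subset B_\varrho\cap\{u=0\}$. Integrating by parts using $\Delta h=0$ and $h-u\in W^{1,2}_0(B_\varrho)$ yields the identity
\[
\tfrac{1}{2}\int|\nabla u|^2-\int u\;-\;\Big(\tfrac{1}{2}\int|\nabla v|^2-\int v\Big)=\tfrac{1}{2}\int_{B_\varrho}|\nabla(h-u)|^2+\int_{B_\varrho}(h-u)\ge 0.
\]

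Plugging this identity into the quasi-minimality \eqref{quasimin}, and using \eqref{f-prop} together with $|\{v>0\}|\ge|\{u>0\}|$, the second summand on the left is controlled by
\[
\int_{B_\varrho}(h-u)\;\le\;\Big(\tfrac{1}{\widehat\eta}+C_2\sigma\Big)\big|B_\varrho\cap\{u=0\}\big|.
\]
Since $h\ge 0$ and $u=0$ on $\{u=0\}$, the left hand side dominates $\int_{B_{\varrho/2}\cap\{u=0\}}h$; on the other hand, the Poisson representation of $h$ yields $h(y)\ge c_N\mean{\partial B_\varrho}u\ge c_N M\varrho$ for every $y\in B_{\varrho/2}$. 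Combining these,
\[
c_N\,M\,\varrho\,\big|B_{\varrho/2}\cap\{u=0\}\big|\;\le\;\Big(\tfrac{1}{\widehat\eta}+C_2\sigma\Big)\big|B_\varrho\cap\{u=0\}\big|.
\]

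The main obstacle lies in turning this into a genuine contradiction: the Poisson lower bound is only effective on the concentric half-ball, whereas the minimality estimate controls the full $B_\varrho$. I would resolve this exactly as in \cite{AC}, either by iterating the comparison on a sequence of shrinking concentric balls to propagate the vanishing of $u$ inward, or by first localising $y_0$ via Lemma \ref{lemma1} and running the same argument on a smaller ball centred at $y_0$, on which the vanishing set of $u$ occupies a definite fraction of the ambient ball. After choosing $\sigma$ so that $C_2\sigma\le\widehat\eta/2$ (as in Lemma \ref{lemma1}) and $M$ sufficiently large in terms of $N$, $R$ and $\widehat\eta$, the resulting estimate contradicts the assumption, proving the lemma in its contrapositive form.
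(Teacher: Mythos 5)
Your overall strategy (quasi-minimality with a positive competitor inside $B_\varrho$, combined with the Poisson representation) is the right one, but there are two concrete problems: a false inequality, and an unresolved gap at exactly the point where you acknowledge uncertainty.

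\textbf{The harmonic competitor and the inequality $h\ge u$.} Because $-\Delta u\le 1$ and $u$ is \emph{superharmonic} on $\{u>0\}$, the correct comparison is the opposite of what you claim. If $v_0$ solves $-\Delta v_0=1$ in $B_\varrho$, $v_0=u$ on $\partial B_\varrho$, then indeed $u\le v_0$ in $B_\varrho$; but $v_0 = h + w_0$ where $w_0\ge 0$ is the torsion function of $B_\varrho$, so what follows is $u\le h+w_0$, not $u\le h$. In fact $u$ can exceed $h$ (for instance where $u$ is strictly superharmonic), so you cannot drop $\int_{\{u>0\}}(h-u)$ to pass from $\int_{B_\varrho}(h-u)$ to $\int_{B_\varrho\cap\{u=0\}}h$. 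This kills the chain
\[
c_N\,M\,\varrho\,\big|B_{\varrho/2}\cap\{u=0\}\big|\le\int_{B_\varrho}(h-u)\le C\,\big|B_\varrho\cap\{u=0\}\big|.
\]
The paper avoids the issue entirely by taking $v$ with $-\Delta v=1$ in $B_\varrho$, $v=u$ on $\partial B_\varrho$: with that competitor the zeroth-order term cancels identically and the quasi-minimality gives the clean estimate $\tfrac12\int_{B_\varrho}|\nabla(u-v)|^2\le \tfrac{C_2+1}{\widehat\eta}\,|\{u=0\}\cap B_\varrho|$, which is \eqref{uv}.

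\textbf{The ``propagation'' step.} You correctly flag the obstacle: the Poisson lower bound is only effective in the interior of $B_\varrho$, whereas the measure on the right-hand side involves all of $B_\varrho$. But the two fixes you propose do not work. Iterating on shrinking concentric balls yields an estimate relating $|B_{\varrho/2^{k+1}}\cap\{u=0\}|$ to $|B_{\varrho/2^k}\cap\{u=0\}|$, which gives no contradiction because the zero set could simply concentrate in the outer annuli; moreover, \eqref{M} does not automatically propagate to smaller balls. Localising via a density estimate at $y_0$ is circular, since Lemma~\ref{lm:prop2} (the density bounds) is itself derived from Lemmas~\ref{lemma1} and~\ref{lemma2}. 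The actual mechanism in the paper (following \cite[Lemma 3.2]{AC}) is the radial foliation argument: after scaling to $\varrho=1$, fix two disjoint balls $B_{1/8}(y_1),B_{1/8}(y_2)\subset B_{1/2}$; foliate $B_1$ by segments emanating from $y_i$; for each direction $\theta$ let $\psi_i(\theta)$ be the first zero of $u$ outside $B_{1/8}(y_i)$ along that ray; apply the fundamental theorem of calculus to $u-v$ along the ray (so $v(\psi_i(\theta))\le \sqrt{\zeta_i-r_i}\,\|\nabla(u-v)\|_{L^2({\rm ray})}$) and compare with the Poisson lower bound $v(\psi_i(\theta))\ge c(N)\,(1-|\psi_i(\theta)|)\,\mean{\partial B_1}u$. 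Integrating over $\theta$ relates $M^2\,|\{u=0\}\cap(B_1\setminus B_{1/8}(y_i))|$ to $\int|\nabla(u-v)|^2$, and hence, via \eqref{uv}, to $|\{u=0\}\cap B_1|$; using the two disjoint centres then gives $\tfrac{M^2}{2}\,|\{u=0\}\cap B_1|\le C\,|\{u=0\}\cap B_1|$, which is the sought contradiction for $M$ large. This is the essential new idea and it is missing from your argument.
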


\begin{proof}Again we drop the subscript \(j\). First notice that if \(M\) is large enough and \eqref{M} holds true then necessarily 
\(B_{\varrho}(x_0)\subset B_R\). Indeed, remember that $-\Delta u\le 1$ in $B_R$, then by the maximum principle
\[
u(x)\le \frac{R^2-|x|^2}{2N} \qquad x\in B_R.
\]
Thus, if \(B_\varrho(x_0)\cap \partial B_R\ne \emptyset\) then
\[
\mean{\partial B_{\varrho}(x_0)\cap B_R}u\, d\mathcal{H}^{N-1} \le C(N,R)\, \varrho,
\]
for some constant depending only on \(R\) and \(N\) and this would contradict \eqref{M} if $M>C(N,R)$. Then we can always assume that $B_\varrho(x_0)\subset B_R$. Let us now define \(v\in W^{1,2}_0(B_R)\) as 
\[
\begin{cases}
-\Delta v=1\quad&\text{on \(B_\varrho\)}\\
v=u &\text{in \(\R^N\setminus B_{\varrho}\)},
\end{cases}
\]
where we simply write \(B_\varrho\) for \(B_\varrho(x_0)\), since \(x_0\) is fixed. 
Of course, by the maximum principle there holds \(v>0\) in \(B_\varrho\) and since $u=v$ in the complementary of $B_\varrho$, we get 
\[
\{u>0\}\Delta \{v>0\}=\{u=0\}\cap B_\varrho.
\]
Using this, \eqref{quasimin} and \eqref{f-prop} we get
\[
\frac 1 2 \int_{B_\varrho} |\nabla u|^2\, dx-\int_{B_\varrho} u\, dx\le \frac 1 2  \int_{B_\varrho} |\nabla v|^2\, dx-\int_{B_\varrho} v\, dx+\left(\frac {1}{\widehat \eta}+C_2\,\sigma\right) |\{u=0\}\cap B_\varrho|.
\]
By appealing to the equation satisfied by \(v\) and the fact \(\sigma < 1<1/\widehat \eta\), the above equation becomes
\begin{equation}
\label{uv}
\frac 1 2 \int_{B_\varrho}|\nabla u -\nabla v|^2\, dx\le \frac{C_2+1}{\widehat \eta}\, \big|\{u=0\}\cap B_\varrho\big|.
\end{equation}
Through the scaling
\[
u(x)\mapsto \frac{1}{\varrho}\, u(x_0+\varrho\, x),
\]
we can assume that \(\varrho=1\). We want to bound the left-hand side of \eqref{uv} from below by a multiple of the right-hand side. In order to do this  we fix  two points \(y_1\) and \(y_2\) in \(B_{1/4}\) such that \(B_{1/8}(y_1)\) and \(B_{1/8}(y_2)\) are disjoint and contained in \(B_{1/2}\). For \(i=1,2\), let \(\zeta_i:\mathbb{S}^{N-1}\to \mathbb{R}^+\) be such that
\begin{equation}
\label{zeta}
\partial B_1=\{y_i+\zeta_i(\theta)\,\theta\, :\, \theta\in\mathbb{S}^{N-1}\}.
\end{equation}
Let us define
\[
\psi_i(\theta)=y_i+r_i(\theta)\,\theta
\]
where
\[
r_i(\theta)=\inf\left\{ \frac 1 8 \le r\le \zeta_i(\theta)\, :\, u(y_i+r\theta)=0\right\}.
\]
and we set the above infimum to be \(\zeta_i(\theta)\) if no such \(r\) exists. That is \(\psi_i(\theta)\) is the first point  outside \(B_{1/8}\) and lying on the segment joining \(y_i\) to \(y_i+\zeta_i(\theta)\,\omega\) where \(u\) vanishes. Hence
\begin{equation}\label{tfc}
\begin{split}
v(\psi_i(\theta))&=v(\psi_i(\theta))-u(\psi_i(\omega))\\
&\le \int_{r_i(\theta)}^{\zeta_i(\theta)}|\nabla u-\nabla v|(y_i+r\omega) \,dr\\
&\le\sqrt{ \zeta_i(\theta)-r_i(\theta)}\,\left( \int_{r_i(\theta)}^{\zeta_i(\theta)}|\nabla u-\nabla v|^2(y_i+r\omega)| \,dr\right)^{1/2}. 
\end{split} 
\end{equation}
By the maximum principle \(v\) is above the harmonic function sharing the same boundary data of \(u\), hence, by the Poisson representation formula it follows that
\begin{equation}\label{poisson}
\begin{split}
v(\psi_i(\theta))&\ge c(N) \big(1-|\psi_i(\theta)|\big)\, \mean{\partial B_1} u\, d\mathcal{H}^{N-1}.
\end{split}
\end{equation}
\begin{figure}
\includegraphics[scale=.35]{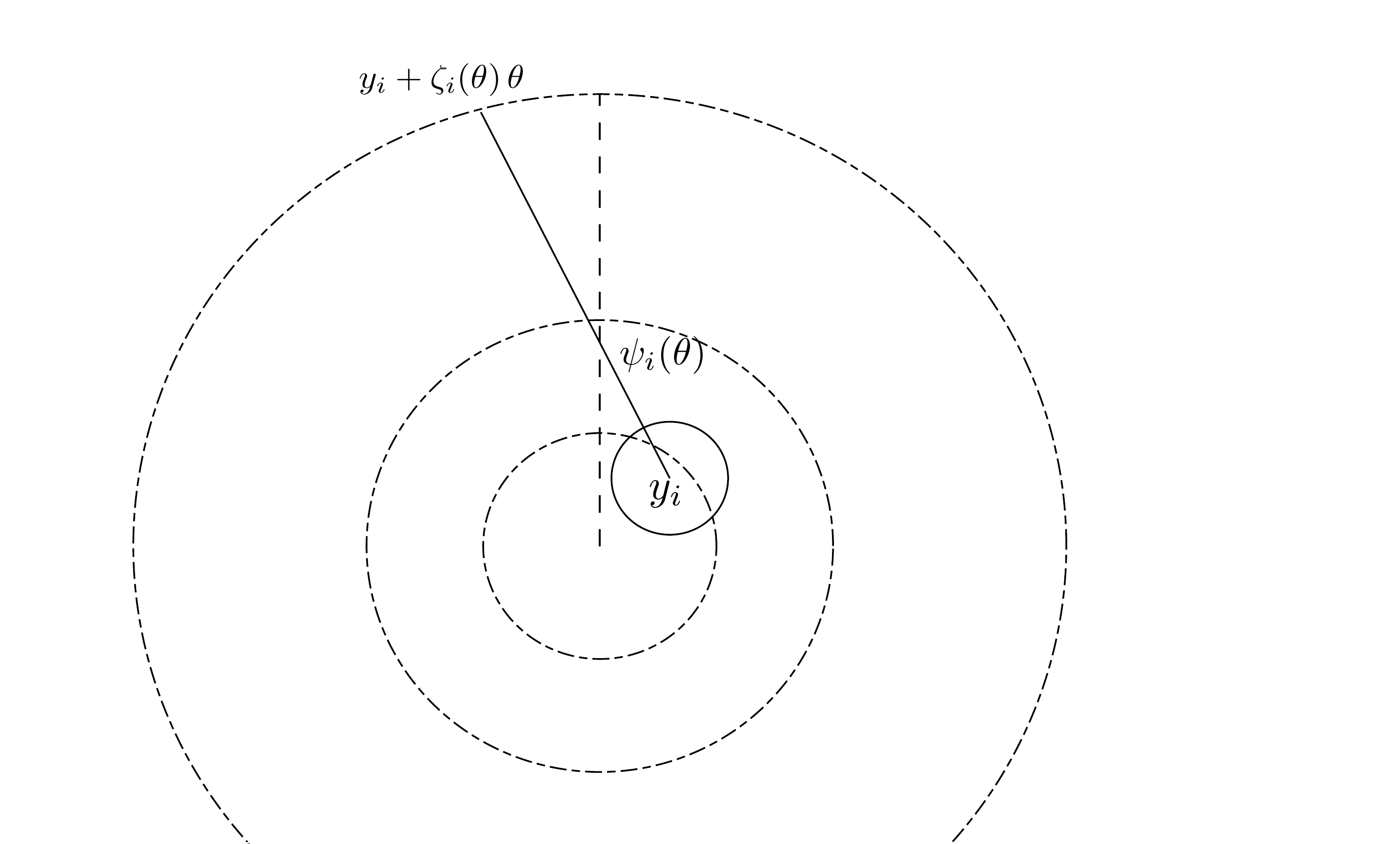}
\caption{The geometric construction of Lemma \ref{lemma2}}
\end{figure}
By elementary geometric considerations (see Figure 4.1),
\[
1-|\psi_i(\theta)|\ge c\, |\zeta_i(\theta)-r_i(\theta)|,
\]
and by construction
\[
|\zeta_i(\theta)-r_i(\theta)|\ge \mathcal H^{1}\big(\{r\in [1/8,\zeta_i(\theta)]\, :\,  u(y_i+r\theta)=0\}\big).
\]
Then integrating over \(\theta\in \mathbb S^{N-1}\) and using that $r_i(\theta)\ge 1/8$, \eqref{tfc},  \eqref{poisson}, \eqref{uv} and our assumption (recall that we have set  \(\varrho=1\)) imply
\[
M^2\, \big|\{u=0\}\cap (B_1\setminus B_{1/8}(y_i))\big|\le C(N,R)\,\big|\{u=0\}\cap B_1\big|.
\]
Since the balls \(B_{1/8}(y_1)\) and \(B_{1/8}(y_2)\) are disjoint, this gives
\[
\frac{M^2}{2}|\{u=0\}\cap B_1|\le C(N,R)\, |\{u=0\}\cap B_1|.
\]
By choosing $M$ large enough, the previous implies that $|\{u=0\}\cap B_1|=0$.
\end{proof}
From Lemma \ref{lemma1} and \ref{lemma2}, exactly as in \cite[Section 3]{AC}, we obtain the following. 
\begin{lemma}[Properties of  minimizers, Part II]\label{lm:prop2} Let \(u_j\) be as above, then \(\Omega_j=\{u_j>0\}\) is an open set. Moreover there exists a constant \(C_6=C_6(N,R)\) and a radius \(\varrho_0=\varrho_0(N,R)\) such that
\begin{itemize}
\item[(i)] For every \(x\in \Omega_j\) it holds 
\begin{equation}\label{nondegen}
\frac{1}{C_6}\, \dist(x,\partial \Omega_j)\le u_j(x)\le C_6\,\dist(x,\partial\Omega_j);
\end{equation}
\item[(ii)] the functions \(u_j\) are equi-Lipschitz \(\|\nabla u_j\|_{L^\infty (B_R)}\le C_6\);
\item[(iii)] for every \(x\in \partial\Omega_j\) and every \(\varrho\le \varrho_0\)
\begin{equation}\label{density}
\frac{1}{C_6}\le \frac{|\Omega_j\cap B_\varrho (x)|}{|B_\varrho(x)|}\le C_6.
\end{equation}
\end{itemize}
\end{lemma}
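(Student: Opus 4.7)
The plan is to follow the classical Alt--Caffarelli scheme of \cite[Section 3]{AC}, treating Lemmas \ref{lemma1} and \ref{lemma2} as the two dichotomy results that drive the regularity theory for the perturbed free boundary problem \eqref{umin}.

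\medskip

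The first step is to package those two lemmas into the following twin control on the spherical averages of $u_j$ at every free boundary point: for every $y\in\partial\Omega_j$ and every $\varrho\le\varrho_0$,
\[
m\,\varrho \le \mean{\partial B_\varrho(y)\cap B_R} u_j\, d\mathcal{H}^{N-1}\le M\,\varrho.
\]
The upper bound is the contrapositive of Lemma \ref{lemma2}: if it failed, that lemma would force $u_j>0$ on all of $B_\varrho(y)$, contradicting $y\in\partial\Omega_j$. The lower bound is the contrapositive of Lemma \ref{lemma1} applied with, e.g., $\kappa=1/2$: otherwise that lemma would give $u_j\equiv 0$ on $B_{\varrho/2}(y)$, again contradicting $y\in\partial\Omega_j$.

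\medskip

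The second step is to observe that $-\Delta u_j\le 1$ distributionally in $B_R$, so that $z\mapsto u_j(z)+|z|^2/(2N)$ is subharmonic there. This is obtained by inserting competitors of the form $(u_j-t\varphi)_+$, $\varphi\ge 0$ smooth and $t>0$ small, into \eqref{umin} and absorbing the non-local penalizations via the Lipschitz estimates \eqref{f-prop} and Lemma \ref{lem:alphaprop}(ii), exactly as in the proof of Lemma \ref{existence}. Combining this subharmonicity with the upper half of the twin control above and the mean value inequality at scale $\varrho\sim \dist(x,\partial\Omega_j)$ gives the linear upper bound $u_j(x)\le C\,\dist(x,\partial\Omega_j)$; interior $C^{1,\alpha}$ estimates for $-\Delta u_j=1$ in $\Omega_j$ then upgrade this to the full equi-Lipschitz bound (ii), from which openness of $\Omega_j=\{u_j>0\}$ follows by continuity of $u_j$. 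The matching lower bound in (i) is obtained through the standard Alt--Caffarelli non-degeneracy argument: the lower half of the twin control produces, for every $x\in\Omega_j$ with $d=\dist(x,\partial\Omega_j)$, a point on a sphere near $\partial\Omega_j$ where $u_j$ is of order $d$, and a Harnack-type propagation for the harmonic function $u_j+|\cdot-x|^2/(2N)$ inside $B_d(x)\subset\Omega_j$ transfers this lower bound to $u_j(x)$.

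\medskip

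Finally, (iii) is a standard consequence of (i) and (ii): at a boundary point $x\in\partial\Omega_j$ the Lipschitz bound $u_j\le C_6\varrho$ on $B_\varrho(x)$ combined with the lower sphere-average estimate prevents $|\Omega_j\cap B_\varrho(x)|$ from being too small (via the co-area formula), while the upper sphere-average estimate analogously rules out $|B_\varrho(x)\setminus\Omega_j|$ being negligible. I expect the main technical obstacle to be the rigorous derivation of $-\Delta u_j\le 1$ on all of $B_R$: the two penalization terms in \eqref{umin} are non-local functionals of $\{v>0\}$, so the variational derivation is not immediate. However, the smallness relation $C_2\,\sigma\le\widehat\eta/2$ already exploited in the proof of Lemma \ref{existence}, together with the Lipschitz estimates \eqref{f-prop} and Lemma \ref{lem:alphaprop}(ii), makes the induced perturbation to the Euler--Lagrange inequality absorbable into a lower-order term, reducing the problem to the classical Alt--Caffarelli analysis.
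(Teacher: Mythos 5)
Your proposal follows the Alt--Caffarelli scheme of \cite[Section 3]{AC}, which is exactly what the paper does (the paper offers no proof of its own and simply cites that reference), so the overall approach is the same. Most of the steps you sketch are sound and can be streamlined: the fact that $-\Delta u_j\le 1$ in $B_R$ is actually simpler than you anticipate, since it follows directly from $u_j$ being the energy function of the quasi-open set $\Omega_j$ (test \eqref{energyquasiopen} with $(u_j-t\varphi)_+$, $\varphi\ge 0$); the non-local penalizations never enter, and indeed the paper already invokes this inside the proof of Lemma \ref{lemma1}. For the non-degeneracy lower bound in (i), a cleaner route than the Harnack chain you describe is to apply the contrapositive of Lemma \ref{lemma1} at the point $x$ itself: since $u_j(x)>0$, one gets $\mean{\partial B_d(x)}u_j>m\,d$ with $d=\dist(x,\partial\Omega_j)$, and then superharmonicity of $u_j$ in $B_d(x)\subset\Omega_j$ (where $-\Delta u_j=1\ge 0$) gives $u_j(x)\ge\mean{\partial B_d(x)}u_j>m\,d$ directly.

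The one step where your argument is genuinely too thin is the density estimate (iii). As literally written, the upper inequality ``$\le C_6$'' is vacuous (the ratio is always $\le 1$), but what is needed --- and what \cite[Lemma 3.7]{AC} actually proves, and what Lemma \ref{kuraconv} implicitly uses to deduce Kuratowski convergence of $\partial\Omega_j$ --- is the two-sided bound $c\le |\Omega_j\cap B_\varrho(x)|/|B_\varrho(x)|\le 1-c$. Your claim that the upper sphere-average estimate ``analogously rules out $|B_\varrho(x)\setminus\Omega_j|$ being negligible'' does not hold: knowing $\mean{\partial B_\varrho(x)}u_j\le M\varrho$ is compatible with $\Omega_j$ filling almost all of $B_\varrho(x)$. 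The lower bound on the complement's density is \emph{not} a consequence of (i) and (ii) alone; it is obtained, as in \cite{AC}, by comparing $u_j$ with its harmonic replacement $v$ in $B_\varrho(x)$ and using the full quasi-minimality \eqref{quasimin} to get $\int_{B_\varrho}|\nabla(u_j-v)|^2\le C\,|\{u_j=0\}\cap B_\varrho|$, then bounding the left-hand side below by a Poincar\'e-type argument based on the non-degeneracy and the fact that $u_j(x)=0$ while $v(x)\gtrsim\varrho$. You would need to incorporate this harmonic-replacement step to make (iii) complete in the form actually required by the rest of the paper.
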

As in \cite[Theorem 4.5]{AC} we also have the following.
\begin{lemma}\label{weak}Let \(u_j\) as above, then  there exists a Borel function \(q_{u_j}\) such that
\begin{equation}\label{weaksol}
-\Delta u_j=1_{\Omega_j}-q_{u_j}\,\mathcal H^{N-1}\llcorner \partial^* \Omega_j.
\end{equation}
In addition \(0<c\le q_{u_j} \le C\), where \(c\) and \(C\) depends only on \(N\) and \(R\) and 
\[
\mathcal H^{N-1}\big( \partial \Omega_j\setminus \partial^*\Omega_j\big)=0.
\]
\end{lemma}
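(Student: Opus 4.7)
The proof follows the scheme of \cite[Theorem 4.5]{AC}, adapted to the quasi-minimality \eqref{umin}--\eqref{quasimin}, and splits into three ingredients: identifying the defect $-\Delta u_j - 1_{\Omega_j}$ as a non-positive Radon measure supported on $\partial\Omega_j$, proving two-sided $(N{-}1)$-dimensional density bounds on this measure, and invoking De Giorgi's structure theorem to pass from the topological to the reduced boundary. For the first ingredient, since $-\Delta u_j = 1$ weakly in the open set $\Omega_j$ and $u_j\equiv 0$ forces $\Delta u_j = 0$ on the interior of $\{u_j=0\}$, the distribution $\mu_j := 1_{\Omega_j}+\Delta u_j$ is already supported on $\partial\Omega_j$, so we only need $\mu_j \ge 0$. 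To this end, test \eqref{umin} against the competitor $v := (u_j - t\varphi)_+$, where $\varphi \in C^\infty_c(B_R)$ is non-negative and $t>0$; the identity $|\{v>0\}| = |\Omega_j| - |\{0<u_j\le t\varphi\}|$ together with \eqref{f-prop} yields a \emph{gain} $\widehat\eta\,|\{0<u_j\le t\varphi\}|$ in the $f_{\widehat\eta}$ term, whereas Lemma \ref{lem:alphaprop}(ii) bounds the change in the smoothed asymmetry by at most $C_2\sigma\,|\{0<u_j\le t\varphi\}|$. Because $\sigma$ is chosen so small that $C_2\sigma\le\widehat\eta/2$ (the very condition used in Lemmas \ref{existence}--\ref{lemma2}), these boundary-layer terms combine into a non-negative contribution, and dividing by $t$ and letting $t\to 0^+$ produces $\int\nabla u_j\cdot\nabla\varphi\le\int_{\Omega_j}\varphi$, i.e.\ $\mu_j\ge 0$. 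Riesz's representation theorem then upgrades $\mu_j$ to a non-negative Radon measure supported on $\partial\Omega_j$.

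For the density estimates of Step 2, the upper bound $\mu_j(B_\varrho(x_0))\le C\varrho^{N-1}$ at every $x_0\in\partial\Omega_j$ and $\varrho\le\varrho_0$ follows by integrating the identity $-\Delta u_j = 1_{\Omega_j}-\mu_j$ against a cut-off in $B_\varrho(x_0)$ and using the Lipschitz bound from Lemma \ref{lm:prop2}(ii), reproducing \cite[Lemma 4.2]{AC}. The lower bound $\mu_j(B_\varrho(x_0))\ge c\varrho^{N-1}$ follows the strategy of \cite[Lemma 4.3]{AC}: if the mass on $B_\varrho(x_0)$ were much smaller than $\varrho^{N-1}$, then the harmonic replacement of $u_j$ inside $B_\varrho(x_0)$ would be an energetically cheaper competitor by an amount that cannot be compensated by the perturbation of the volume and smoothed asymmetry terms (again using $C_2\sigma\le\widehat\eta/2$), while the non-degeneracy \eqref{nondegen} certifies that $u_j$ genuinely vanishes on a non-trivial portion of $B_\varrho(x_0)$.

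The two-sided density estimates for $\mu_j$, combined with the interior/exterior volume densities \eqref{density}, place us exactly in the setting of the final step of \cite[Theorem 4.5]{AC}: a standard blow-up analysis together with De Giorgi's structure theorem yields that $\Omega_j$ has locally finite perimeter, that $\mu_j$ is mutually absolutely continuous with $\mathcal H^{N-1}\llcorner\partial^*\Omega_j$ with Radon--Nikodym derivative $q_{u_j}$ satisfying $0<c\le q_{u_j}\le C$ for constants depending only on $N$ and $R$, and that $\mathcal H^{N-1}(\partial\Omega_j\setminus\partial^*\Omega_j)=0$. The principal obstacle in the whole argument is the sign check in Step 1: one must verify that the unfamiliar Fraenkel-type penalty $\sqrt{\varepsilon_j^2+\sigma^2(\alpha(\{v>0\})-\varepsilon_j)^2}$ does not corrupt the non-negativity of the defect measure; this is handled precisely by the ``volume dominates asymmetry'' principle $C_2\sigma\le\widehat\eta/2$ already exploited throughout this section, after which Steps 2 and 3 are close transcriptions of the original Alt--Caffarelli proof.
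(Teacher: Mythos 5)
Your proposal is correct and follows the same path the paper implicitly takes: the paper states this lemma with only the citation ``As in \cite[Theorem 4.5]{AC},'' and your three steps (non-negativity of the defect measure $\mu_j=1_{\Omega_j}+\Delta u_j$ via the one-sided competitor $(u_j-t\varphi)_+$, two-sided $(N-1)$-dimensional density estimates from the Lipschitz bound and non-degeneracy of Lemma \ref{lm:prop2}, and De Giorgi's structure theorem plus blow-up to identify $\mu_j=q_{u_j}\mathcal{H}^{N-1}\llcorner\partial^*\Omega_j$) are precisely what that reference does, adapted to the inhomogeneous equation $-\Delta u_j=1$ in $\Omega_j$. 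Your explicit verification that the smoothed-asymmetry penalty is dominated by the volume penalty whenever $C_2\sigma\le\widehat\eta/2$ is exactly the observation the paper relies on throughout this section to reduce the quasi-minimality \eqref{quasimin} to the Alt--Caffarelli setting, so no new ideas are needed and no gap appears.
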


In the above Lemma \(\partial^* \Omega_j\) denotes the \emph{reduced boundary} of the set of finite perimeter \(\Omega_j=\{u_j>0\}\). We recall (see \cite[Chapter 15]{Maggi}) that for every \(\bar x\in \partial^* \{u_j>0\}\), there exists a unit vector \(\nu_{u_j}(\bar x)\) such that
\begin{equation}\label{halfplane}
\frac{\{u_j>0\}-\bar x}{\varrho} \rightarrow \big\{x:\ x\cdot\nu_{u_j}(\bar x)\ge 0\big\},\qquad \text{in } L^1_{\rm loc} (\R^N).
\end{equation}
Moreover for \(\mathcal H^{N-1}\) almost every\footnote{More precisely, in every Lebesgue point of \(q_{u_j}\) with respect to \(\mathcal H^{N-1}\llcorner \partial^*\{u_j>0\}\).}  \(\bar x \in \partial^* \{u_j>0\}\), it holds
\begin{equation}
\label{blowup}
u_j^{\varrho}(x):=\frac{u_j(\bar x+\varrho x)}{\varrho} \to q_{u_j}(\bar x) \big(x\cdot \nu_{u_j}(\bar x)\big)_+,\qquad \text{ in \( W^{1,p}_{\rm loc}(\R^N)\) for every \(p\in[1,+\infty).\)}
\end{equation}
%and, as a consequence of \eqref{density}, one also has
%\begin{equation*}
%\frac{\partial \{u_j>0\}-\bar x}{\varrho}  \longrightarrow \big\{x:\ x\cdot\nu_{u_j}(\bar x)= 0\big\}\quad \text{in the Kuratowski sense}.
%\end{equation*}
For the proofs of this last  fact we refer to \cite[Theorem 4.8]{AC}. 
The following simple Lemma is a standard consequence of Lemma \ref{lm:prop1} (ii) and of the density estimates \eqref{density}.
\begin{lemma}
\label{kuraconv}
Every limit point \(\Omega_\infty\) of \(\Omega_j\) with respect to the \(L^1\) convergence is a ball of radius \(1\) and center \(x_\infty \in B_R\). Moreover 
\[
\partial \Omega_j \to \partial B_1(x_\infty)\qquad\text{in the Kuratowski sense}\footnote{We recall that a sequence of sets \(S_k\) converges to a set \(S\) in the Kuratowski sense if \begin{itemize} \item for every sequence of points \(x_k\in S_k\) any limit point belongs to  \(S\);\item for every point \(x\in S\) there is a sequence of point \(x_k\in S_k\) such that \(x_k\to x\).\end{itemize}} \text{ as \(j\to \infty\)}.
\]
In particular for every \(\delta>0\) there exists a \(j_\delta\in\mathbb{N}\) such that
\begin{equation}\label{kura}
B_{1-\delta}(x_j)\subset \Omega_j \subset B_{1+\delta}(x_j),\qquad \mbox{ for all }\, j\ge j_{\delta},
\end{equation}
where \(x_j\in B_R\).
\end{lemma}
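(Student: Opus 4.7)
The plan is to combine the $L^1$-compactness of $\Omega_j$ (whose limits are immediately identifiable as unit balls) with the uniform two-sided density estimate at boundary points from Lemma \ref{lm:prop2}, which upgrades the $L^1$ convergence of sets to a Kuratowski convergence of their boundaries. To implement it, I first note that Lemma \ref{lm:prop1} (i) gives $\alpha(\Omega_j)\to 0$ and $\bigl||\Omega_j|-|B_1|\bigr|\to 0$, while Lemma \ref{lem:alphaprop} (i) yields
\[
|\Omega_j\Delta B_1(x_{\Omega_j})|^2\le C_1\,\alpha(\Omega_j)\longrightarrow 0.
\]
Since $\Omega_j\subset B_R$, the barycenters $x_{\Omega_j}$ stay bounded and, up to a subsequence, $x_{\Omega_j}\to x_\infty$; hence $\Omega_j\to B_1(x_\infty)$ in $L^1$, and the inclusion $B_1(x_\infty)\subset\overline{B_R}$ combined with $R\ge 2$ forces $x_\infty\in B_R$.

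Next I verify the two inclusions needed for Kuratowski convergence of $\partial\Omega_j$ to $\partial B_1(x_\infty)$. If $y_{j_k}\in\partial\Omega_{j_k}$ and $y_{j_k}\to y$, the density estimate \eqref{density} implies that, for every fixed $\varrho\le\varrho_0$, both $\Omega_{j_k}\cap B_\varrho(y_{j_k})$ and $B_\varrho(y_{j_k})\setminus\Omega_{j_k}$ occupy a uniformly positive fraction of $B_\varrho(y_{j_k})$; passing to the limit via the $L^1$ convergence $\Omega_j\to B_1(x_\infty)$ yields that both $B_1(x_\infty)\cap B_\varrho(y)$ and $B_\varrho(y)\setminus B_1(x_\infty)$ have positive measure for every small $\varrho$, so $y\in\partial B_1(x_\infty)$. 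Conversely, for $y\in\partial B_1(x_\infty)$ and $\delta>0$, both $B_\delta(y)\cap B_1(x_\infty)$ and $B_\delta(y)\setminus B_1(x_\infty)$ have positive measure, and the $L^1$ convergence ensures that $\Omega_j\cap B_\delta(y)\ne\emptyset\ne B_\delta(y)\setminus\Omega_j$ for large $j$; since $\Omega_j$ is open (Lemma \ref{lm:prop2}) and $B_\delta(y)$ is connected, some point of $\partial\Omega_j$ lies in $B_\delta(y)$, and a diagonal argument produces $y_j\in\partial\Omega_j$ with $y_j\to y$.

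The containment inclusion is then a direct consequence: Kuratowski convergence of equi-bounded sets is equivalent to Hausdorff convergence, so for every $\delta>0$ and every sufficiently large $j$ the set $\partial\Omega_j$ lies in a $\delta$-neighbourhood of $\partial B_1(x_\infty)$. The connected open sets $B_{1-\delta}(x_\infty)$ and $\{y:|y-x_\infty|>1+\delta\}$ therefore miss $\partial\Omega_j$ and are each entirely contained in $\Omega_j$ or in its complement; the $L^1$ convergence, together with the fact that $\Omega_j\subset B_R$ is bounded, selects the first inclusion for $B_{1-\delta}(x_\infty)$ and the second for the exterior region. Setting $x_j=x_\infty$ concludes the argument. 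I expect the most delicate point to be the first half of the Kuratowski step, where the two-sided density estimate of Lemma \ref{lm:prop2} is essential: without the uniform regularity of $\partial\Omega_j$ one could not exclude limit boundary points lying strictly inside or strictly outside $B_1(x_\infty)$, since $L^1$ convergence alone is compatible with wildly oscillating boundaries.
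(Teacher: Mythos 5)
Your proof is correct and follows exactly the route the paper indicates (the authors simply remark that the lemma is a ``standard consequence of Lemma~\ref{lm:prop1} (ii) and of the density estimates~\eqref{density}''), so the comparison is mainly a matter of you supplying the details they omit. Two small observations worth flagging. First, your argument for the forward half of Kuratowski convergence needs the \emph{two-sided} density estimate at boundary points, i.e.\ that both $|\Omega_j\cap B_\varrho(x)|$ and $|B_\varrho(x)\setminus\Omega_j|$ are comparable to $|B_\varrho|$; as literally printed, \eqref{density} has the trivial upper bound $\le C_6$, but the intended bound coming from the Alt--Caffarelli theory (their Lemma~3.7) is indeed $\le 1-1/C_6$, so you are using the right fact even if the paper's display has a typo. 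Second, your argument establishes the conclusion along a subsequence with $x_j\equiv x_\infty$; to obtain \eqref{kura} for the full sequence with $j$-dependent centres one should finish with the standard ``suppose not, extract a convergent subsequence, contradict'' step, which you omit but which is routine. Neither point is a genuine gap; the proof is sound and matches the paper's intended argument.
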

We are now in position to address higher regularity of \(\partial\{u_j>0\}\). Since  \(u_j\) is a weak solution for \(q_{u_j}\) in the sense of \cite[Section 5]{AC} and \cite[Section 3]{GS}. To apply their results we have to show that \(q_{u_j}\) is continuous. To identify \(q_{u_j}\) we try to write down the Euler-Lagrange equations for the problem \eqref{umin}. In order to do this we first  have to show that \(\Omega_j=\{u_j>0\}\) is minimal with respect to every (small) perturbation. This will be done in the next Lemma, where by \(\mathcal N_{\delta}(A)\) we denote  the \(\delta\) neighborhood of a generic set \(A\).
\begin{lemma}
\label{smallvariation}
Let \(R\ge2\), then for every \(0<2\,\delta<R-1\)  there exists \(j_\delta\in\mathbb{N}\) such that for \(j\ge j_\delta\), the energy function \(u_j\) satisfies \eqref{umin} for every \(v\in W^{1,2}_0\big(\mathcal N_{\delta}(\Omega_j)\big)\). 
\end{lemma}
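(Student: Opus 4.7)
The plan is to reduce the claim to the minimality property \eqref{mingj} by showing that, for $j$ large, every $v\in W^{1,2}_0(\mathcal N_\delta(\Omega_j))$ gives rise to an admissible competitor $\{v>0\}\subset B_R$; then \eqref{umin} follows by direct comparison. The first observation is that $\G_{\widehat\eta,j}$ is translation invariant, since $E$ and $f_{\widehat\eta}(|\cdot|)$ obviously are, and $\alpha$ is invariant too because it is defined through the barycenter of its argument (a change of variables in \eqref{alphacomodo} confirms this). Hence any translate of $\Omega_j$ that still lies inside $B_R$ is again a minimizer.

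By Lemma~\ref{lm:prop1}(ii), up to such a translation we have $x_{\Omega_j}=0$ and $\Omega_j\to B_1$ in $L^1$; since $R\ge 2$, this translate is actually contained in $B_R$ for $j$ large enough, so the whole argument may be performed under the normalization $x_{\Omega_j}=0$. Under this normalization, the Kuratowski convergence $\partial\Omega_j\to\partial B_1$ from Lemma~\ref{kuraconv} provides an index $j_\delta$ depending only on $\delta$ such that $\Omega_j\subset B_{1+\delta}$ for every $j\ge j_\delta$. Combining this with the assumption $2\delta<R-1$ yields
\[
\mathcal N_\delta(\Omega_j)\subset B_{1+2\delta}\subset B_R.
\]
Thus any $v\in W^{1,2}_0(\mathcal N_\delta(\Omega_j))$ extends by zero to an element of $W^{1,2}_0(B_R)$, and the quasi-open set $\{v>0\}$ is contained in $B_R$ and is therefore admissible in the minimization problem \eqref{mingj}.

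The minimality of $\Omega_j$ then gives $\G_{\widehat\eta,j}(\Omega_j)\le \G_{\widehat\eta,j}(\{v>0\})$. Rewriting the left-hand side via $E(\Omega_j)=\tfrac12\int|\nabla u_j|^2\,dx-\int u_j\,dx$ (using that $u_j$ is the energy function of $\Omega_j=\{u_j>0\}$, cf.\ \eqref{maxprin}), and bounding the right-hand side through $E(\{v>0\})\le \tfrac12\int|\nabla v|^2\,dx-\int v\,dx$ (the defining inequality \eqref{energyquasiopen}), we obtain exactly \eqref{umin}. The main point to get right is the reduction to the normalization $x_{\Omega_j}=0$: one must verify both that the translated set still lies inside $B_R$ (which relies on $R\ge 2$ together with the $L^1$-closeness to $B_1$) and that the translation leaves $\G_{\widehat\eta,j}$ unchanged, including the asymmetry term $\alpha$. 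Everything else is a direct consequence of admissibility in the minimum problem \eqref{mingj} and of the very definition of the torsional energy.
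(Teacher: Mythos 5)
Your argument is correct and follows essentially the same route as the paper's: translate $\Omega_j$ so that it sits inside $B_{1+\delta}(0)$ (the paper uses the center $x_j$ furnished by \eqref{kura}, you use the barycenter $x_{\Omega_j}$, which differs by $O(\delta)$ and so changes nothing), observe that $\mathcal N_\delta(\Omega_j)\subset B_{1+2\delta}\subset B_R$, invoke translation invariance of $\G_{\widehat\eta,j}$ to conclude that the translated set is still a minimizer of \eqref{mingj}, and finally test its minimality against $\{v>0\}\subset B_R$ to obtain \eqref{umin}. Your extra paragraph spelling out how $\G_{\widehat\eta,j}(\Omega_j)\le\G_{\widehat\eta,j}(\{v>0\})$ unwinds to \eqref{umin} via $E(\{v>0\})\le\tfrac12\int|\nabla v|^2-\int v$ is a legitimate completion of a step the paper leaves implicit.
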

\begin{proof} Let \(\delta\) be as in the statement, thanks to \eqref{kura} we can assume that for \(j\ge j_\delta\) we have \(\Omega_j\subset B_{1+\delta}(x_j)\) for some \(x_j\in B_R\). The translated sets $U_j=\Omega_j-x_j$ are such that
\[
U_j \subset B_{1+\delta}(0)\subset B_R,
\]
so that $\mathcal{N}_\delta(U_j)\subset B_R$. Moreover, we have \(\G_{\widehat\eta,j}(\Omega_j)=\G_{\widehat\eta,j}(U_j)\), as the functional is translation invariant. 
Translating back this proves the claim.  
\end{proof}
It is not difficult to see that the formal optimality condition for \eqref{umin} reads as 
\[
\Bigg|\frac{\partial u_j(x)}{\partial \nu}\Bigg|^2-\frac{\sigma^2 (\alpha(\Omega_j)-\eps_j)}{\sqrt{\eps_j^2+\sigma^2(\alpha(\Omega_j)-\eps_j)^2}}\Bigg[|x-x_{\Omega_j}|-\Bigg(\int_{\Omega_j}\frac{y-x_{\Omega_j}}
{|y-x_{\Omega_j}|}dy\Bigg)\cdot x\Bigg]=\Lambda_j,
\]
for some constant \(\Lambda_j\). The goal of next Lemma is to show that this is actually the case, at least for \(\mathcal H^{N-1}\llcorner \partial^*\Omega_j\) almost every point.
\begin{lemma}\label{lm:EL}
Let \(R\ge 2\) and \(u_j\) be as in Lemma \ref{lm:prop1}. There exists $\overline j=\overline j(R)$ such that if $j\ge \overline j$, the following holds:
\begin{equation}
\label{EL}
\begin{split}
\big(q_{u_j}(x_1)\big)^2&-\frac{\sigma^2 (\alpha(\Omega_j)-\eps_j)}{\sqrt{\eps_j^2+\sigma^2(\alpha(\Omega_j)-\eps_j)^2}}\Bigg[|x_1-x_{\Omega_j}|-\Bigg(\mean{\Omega_j}\frac{y-x_{\Omega_j}}
{|y-x_{\Omega_j}|}dy\Bigg)\cdot x_1\Bigg]\\
&=
\big(q_{u_j}(x_2)\big)^2-\frac{\sigma^2 (\alpha(\Omega_j)-\eps_j)}{\sqrt{\eps_j^2+\sigma^2(\alpha(\Omega_j)-\eps_j)^2}}\Bigg[|x_2-x_{\Omega_j}|-\Bigg(\mean{\Omega_j}\frac{y-x_{\Omega_j}}
{|y-x_{\Omega_j}|}dy\Bigg)\cdot x_2\Bigg],
\end{split}
\end{equation}
for every two points \(x_1\) and \(x_2\) in \(\partial^*\{u_j>0\}\) such that \eqref{halfplane} and \eqref{blowup} hold true.
\end{lemma}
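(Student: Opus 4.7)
The plan is to derive the Euler--Lagrange condition for the penalized problem \eqref{umin} by means of inner (Hadamard-type) variations. For any smooth vector field $V\in C^1_c(\mathcal{N}_\delta(\Omega_j);\R^N)$, let $\Phi_t=\mathrm{Id}+tV$ and consider the competitor $v_t=u_j\circ\Phi_t^{-1}$, whose positivity set is $\Phi_t(\Omega_j)$. By Lemma~\ref{smallvariation}, $v_t$ is admissible in \eqref{umin} for $j\ge\overline j$ and $|t|$ small, hence the map $t\mapsto \mathcal{G}_{\widehat\eta,j}\big(\Phi_t(\Omega_j)\big)$ attains a minimum at $t=0$ and its first variation must vanish.

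I would then compute the three shape derivatives at $t=0$. For the energy, the classical Hadamard formula yields
\[
\frac{d}{dt}\bigg|_{t=0} E\big(\Phi_t(\Omega_j)\big)=-\frac{1}{2}\int_{\partial^*\Omega_j} q_{u_j}^2\, V\cdot\nu_{u_j}\, d\mathcal{H}^{N-1},
\]
where the identification of $q_{u_j}$ with $|\partial_\nu u_j|$ on the reduced boundary comes from \eqref{weaksol} and the half-plane blow-up \eqref{blowup}. For the measure, $\frac{d}{dt}|_{t=0}|\Phi_t(\Omega_j)|=\int_{\partial^*\Omega_j} V\cdot\nu_{u_j}\, d\mathcal{H}^{N-1}$. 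For the asymmetry, starting from \eqref{alphacomodo} and applying Reynolds' transport theorem to $I(\Omega)=\int_\Omega|x-x_\Omega|\,dx$, together with the formula
\[
\dot x_{\Omega_j}=\frac{1}{|\Omega_j|}\int_{\partial^*\Omega_j}(y-x_{\Omega_j})\,(V\cdot\nu_{u_j})\, d\mathcal{H}^{N-1}
\]
for the barycenter velocity (obtained by differentiating $|\Omega_t|\,x_{\Omega_t}=\int_{\Omega_t}y\,dy$), a direct calculation gives
\[
\alpha'(\Omega_j)[V]=\int_{\partial^*\Omega_j}\Big[\,|x-x_{\Omega_j}|-1-\bar m_j\cdot(x-x_{\Omega_j})\,\Big]\, V\cdot\nu_{u_j}\, d\mathcal{H}^{N-1},
\]
where $\bar m_j:=\mean{\Omega_j}(y-x_{\Omega_j})/|y-x_{\Omega_j}|\,dy$; the chain rule supplies the corresponding derivative of the square-root term.

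Summing the three contributions, setting the total to zero, and letting $V\cdot\nu_{u_j}$ range over a dense family of functions on $\partial^*\Omega_j$, I obtain the pointwise identity valid $\mathcal H^{N-1}$-a.e.\ on $\partial^*\Omega_j$:
\[
\frac{1}{2}q_{u_j}^2(x)-\frac{\sigma^2(\alpha(\Omega_j)-\eps_j)}{\sqrt{\eps_j^2+\sigma^2(\alpha(\Omega_j)-\eps_j)^2}}\Big[\,|x-x_{\Omega_j}|-\bar m_j\cdot x\,\Big]=\Lambda_j,
\]
where $\Lambda_j$ collects all the $x$-independent contributions, namely $-f_{\widehat\eta}'(|\Omega_j|)$, the constant $-1$, and $-\bar m_j\cdot x_{\Omega_j}$. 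At any point $x_i$ satisfying \eqref{halfplane} and \eqref{blowup}, the value $q_{u_j}(x_i)$ is pointwise defined as the blow-up coefficient and the remaining terms are continuous in $x$; hence the identity holds pointwise there. Subtracting at $x_1$ and $x_2$ eliminates the multiplier $\Lambda_j$ and yields \eqref{EL}, up to the overall normalization coming from the $1/2$ in the Hadamard formula, which is absorbed into the coefficient presented in the displayed statement.

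The main technical obstacle is the rigorous justification of the Hadamard formula for $E$ in the present low-regularity setting, since $\partial\Omega_j$ is only $(N-1)$-rectifiable and $u_j$ is merely Lipschitz across the free boundary. This is handled by performing the inner-variation calculation directly at the level of the quasi-minimality \eqref{umin}, pulling integrals back to the fixed domain via $y=\Phi_t(x)$, differentiating in $t$, and invoking the weak formulation \eqref{weaksol} together with the blow-up \eqref{blowup} to identify the boundary contribution as $\int q_{u_j}^2\, V\cdot\nu_{u_j}\, d\mathcal{H}^{N-1}$; this is exactly the step that underlies the Alt--Caffarelli free boundary regularity. A secondary but essential point is the nonlocal dependence of $\alpha$ on the barycenter $x_{\Omega_j}$, which is precisely what produces the correction $\bar m_j\cdot(x-x_{\Omega_j})$ in the shape derivative of $\alpha$ and thus the barycenter term inside the brackets in \eqref{EL}.
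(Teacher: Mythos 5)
Your overall strategy (inner/domain variations, derive an Euler--Lagrange identity, subtract at two points) is the same in spirit as the paper's, but you miss the one design choice that makes the paper's argument go through, and without it your plan has a genuine gap.

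The key point is the treatment of the volume penalty $f_{\widehat\eta}(|\Omega|)$. You let $V$ be an arbitrary vector field and collect $-f_{\widehat\eta}'(|\Omega_j|)$ into the multiplier $\Lambda_j$. But $f_{\widehat\eta}$ is only Lipschitz, not $C^1$: it has a corner at $s=\omega_N$, and from Lemma \ref{lm:prop1} (i) we only know $\big||\Omega_j|-|B_1|\big|\le C_5\,\sigma^4\,\eps_j$, so $|\Omega_j|$ may well equal $\omega_N$, in which case $f_{\widehat\eta}'(|\Omega_j|)$ does not exist. The paper circumvents this precisely by constructing the two-point variation $\Phi_\tau^\varrho$, which pushes $\partial\Omega_j$ out at one point $x_1$ and in at the other point $x_2$ by matching radial bumps, so that the first-order volume change vanishes: $|\Omega^\varrho_\tau|-|\Omega_j|=o(\tau)\varrho^N$, see \eqref{volume}. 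Then the $f_{\widehat\eta}$ term is controlled using only the Lipschitz bound \eqref{f-prop}, and it enters the expansion as $o(\tau)+o_\varrho(1)$ with no multiplier at all. Your proposal does not build volume preservation into the competitor and therefore cannot avoid differentiating $f_{\widehat\eta}$.

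A second, related difference: you invoke a Hadamard boundary-integral formula for $\frac{d}{dt}E(\Phi_t(\Omega_j))$ globally on $\partial^*\Omega_j$, and then pass from an integral identity to a pointwise one by density. You flag this as ``the main technical obstacle'' but do not resolve it; in the present low-regularity setting ($u_j$ Lipschitz, $\partial\{u_j>0\}$ merely rectifiable) one needs the blow-up \eqref{blowup} to hold with sufficient uniformity to pass to the limit in a boundary integral, which is exactly what is not available. The paper avoids this by using bump variations of radius $\varrho\to 0$ concentrated precisely at the two points $x_1,x_2$ where \eqref{halfplane}--\eqref{blowup} are assumed to hold, so that only the blow-up at those two points is used, as a clean scaling limit rather than as a global integral formula. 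Finally, you note that your pointwise identity carries an extra $\tfrac12$ on $q_{u_j}^2$ from Hadamard's formula and claim it is ``absorbed''; but multiplying your identity by $2$ changes the relative weight of the asymmetry term from $1$ to $2$, so it is not the same identity as \eqref{EL}. This is a bookkeeping issue rather than a structural one, but it cannot simply be waved away.
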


\begin{proof}
We choose \(\delta=(R-1)/4\) and fix \(j\ge j_\delta\), where \(j_\delta\) is as in Lemma \ref{smallvariation}. Being \(j\) fixed we drop the subscript and, for notational simplicity, we assume that \(x_\Omega=0\).
\par
Let us assume by contradiction that there are two points \(x_1\) and \(x_2\) satisfying \eqref{halfplane} and \eqref{blowup} such that the left-hand side of \eqref{EL} is strictly smaller than the right-hand side. 
\par
Following \cite{AAC} we are going to construct a small variation of \(\Omega=\{u>0\}\) which preserves the volume to the first order and which contradicts \eqref{umin}. In order to do this let us take a smooth radial symmetric function \(\phi(y)=\phi(|y|)\) compactly supported in \(B_1\) and let us define, for \(\tau,\varrho\) small
\begin{equation}
\Phi^\varrho_\tau(x)=
\left\{\begin{array}{cc}
x+\tau\varrho\,\phi\Bigg(\Big|\dfrac{x-x_1}{\varrho}\Big|\Bigg)\nu_{u}(x_1), &x\in B_{\varrho}(x_1),\\
\\
x-\tau\varrho\,\phi\Bigg(\Big|\dfrac{x-x_2}{\varrho}\Big|\Bigg)\nu_{u}(x_2), &x\in B_{\varrho}(x_2),\\
\\
x,&\text{otherwise.}
\end{array}
\right.
\end{equation}
For \(\tau\) small and independent of \(\varrho\), \(\Phi_\tau^\varrho\) is easily seen to be a diffeomorphism. Moreover, still for $\tau$ small, thanks to Lemma \ref{smallvariation} the function
\[
u_\tau^\varrho:=u\circ \Big(\Phi_\tau^\varrho\Big)^{-1}, 
\]
is an admissible competitor for testing the minimality of \(u\),
notice that 
\[
\Omega_\tau^\varrho:=\{x\, :\, u_\tau^\varrho(x)>0\}=\Phi^\varrho_\tau(\Omega).
\]
We now start computing the variations of all the terms involved in the definition of $\mathcal{G}_{\widehat\eta,j}$.
\vskip.2cm\noindent
$\bullet$ {\it Volume term}. We compute
\begin{equation*}
\begin{split}
\frac{|\Omega_\tau^\varrho|-|\Omega|}{\varrho^N}&=\frac{1}{\varrho^N} \int_{\Omega} [\det\nabla \Phi^\varrho_\tau-1]\, dx\\
&=\frac{\tau}{\varrho^N} \Bigg( \int_{\Omega\cap B_{\varrho}(x_1)} \phi'\Bigg(\Big|\dfrac{x-x_1}{\varrho}\Big|\Bigg)\frac{(x-x_1)\cdot\nu_u (x_1)}{|x-x_1|}\, dx\\
&-\int_{\Omega\cap B_{\varrho}(x_2)} \phi'\Bigg(\Big|\dfrac{x-x_2}{\varrho}\Big|\Bigg)\frac{(x-x_2)\cdot\nu_u (x_2)}{|x-x_2|}\, dx\Bigg)+o(\tau),
\end{split}
\end{equation*}
where \(o(\tau)\) is independent on \(\varrho\). Hence, recalling \eqref{halfplane}, changing variables and applying the Divergence Theorem in the last step
\begin{equation}\label{volume}
\begin{split}
\lim_{\varrho\to 0}\frac{|\Omega_\tau^\varrho|-|\Omega|}{\varrho^N} &=\tau\Bigg(\int_{\{y\cdot \nu_u(x_1)\ge 0\}\cap B_1}\phi'(|y|)\,\frac{y\cdot \nu_u(x_1)}{|y|}-\int_{\{y\cdot \nu_u(x_2)\ge 0\}\cap B_1}\phi'(|y|)\frac{y\cdot \nu_u(x_2)}{|y|}\Bigg)+o(\tau)\\
&=-\tau \left(\int_{\{y\cdot\nu_u(x_1)=0\}\cap B_1} \phi(|y|)-\int_{\{y\cdot \nu_u(x_2)=0\}\cap B_1} \phi(|y|)\right)+o(\tau)=o(\tau),
\end{split}
\end{equation}
where we used that the integrals are equal due to the radial symmetry of \(\phi\).  
\vskip.2cm\noindent
$\bullet$ {\it Dirichlet integral and $L^1$ norm}. By changing variables,
\begin{equation*}
\begin{split}
&\frac{1}{\varrho^N}\Bigg(\int |\nabla u ^\varrho_\tau|^2-\int |\nabla u|^2 \Bigg)\\
&=\frac{1}{\varrho^N}\Bigg(\int_{\Omega\cap B_{\varrho}(x_1)} \Big[|(\nabla \Phi_\tau^\varrho)^{-1}\nabla u|^2\det\nabla \Phi_\tau^\varrho-|\nabla u|^2\Big] +\int_{\Omega\cap B_{\varrho}(x_2)} \Big[|(\nabla \Phi_\tau^\varrho)^{-1}\nabla u|^2\det\nabla \Phi_\tau^\varrho-|\nabla u|^2\Big]\Bigg)\\
&=\frac{\tau }{\varrho^N}\Bigg(\int_{\frac{\Omega-x_1}{\varrho}\cap B_1}|\nabla u(x_1+\varrho y)|^2\phi'(|y|)\frac{ \nu_u (x_1) \cdot y}{|y|}-2\phi'(|y|)\frac{ (\nabla u(x_1+\varrho y)\cdot y)(\nabla u(x_1+\varrho y)\cdot\nu_u(x_1))}{|y|}\\
&-\int_{\frac{\Omega-x_2}{\varrho}\cap B_1}|\nabla u(x_2+\varrho y)|^2\phi'(|y|)\frac{ \nu_u (x_2) \cdot y}{|y|}-2\phi'(|y|)\frac{ (\nabla u(x_2+\varrho y)\cdot y)(\nabla u(x_2+\varrho y)\cdot\nu_u(x_2))}{|y|} \Bigg)+o(\tau),
\end{split}
\end{equation*}
with \(o(\tau)\) independent on \(\varrho\). Hence, recalling \eqref{blowup} and \eqref{halfplane}, thanks to the Divergence Theorem we obtain
\begin{equation}\label{dir}
\begin{split}
\lim_{\varrho\to 0} \frac{1}{\varrho^N}\Bigg(\int |\nabla u ^\varrho_\tau|^2-\int |\nabla u|^2 \Bigg)
=\tau \Big[\big(q_u(x_1)\big)^2-\big(q_u(x_2)\big)^2\Big]\int_{\{y_1=0\}\cap B_1} \phi(|y|)+o(\tau).
\end{split}
\end{equation}
With a similar computation and recalling~\eqref{blowup},
\begin{equation}\label{u}
\lim_{\varrho\to 0}\frac{1}{\varrho^N}\Bigg(\int u ^\varrho_\tau-\int u \Bigg)=o(\tau).
\end{equation}
\vskip.2cm\noindent
$\bullet$ {\it Barycenter}. First of all, recall that we have set \(x_\Omega=0\). So we only have to compute
\begin{equation*}
\begin{split}
\frac{x_{\Omega_\tau^\varrho}}{\varrho^N}&=\frac{1}{\varrho^N}\Bigg(\frac{1}{|\Omega_\tau^\varrho|}\int_{\Omega}\Phi_\tau^\varrho(x)\det\nabla\Phi_\tau^\varrho(x)-\frac{1}{|\Omega|}\int_\Omega x\Bigg)\\
&=\frac{1}{\varrho^N}\Bigg(\frac{1}{|\Omega|}\int_{\Omega}\Phi_\tau^\varrho(x)\det\nabla\Phi_\tau^\varrho(x)-\frac{1}{|\Omega|}\int_\Omega x\Bigg)+o_{\varrho}(1)+o(\tau),
\end{split}
\end{equation*}
where we have taken into account \eqref{volume} in the second equality and \(o_\varrho(1)\) tends to \(0\) in \(\varrho\) for fixed \(\tau\), while \(o(\tau)\) is independent on \(\varrho\). Arguing as above, one checks that thanks to \eqref{halfplane},
\begin{equation}
\label{bar}
\begin{split}
\lim_{\varrho\to 0}\frac{x_{\Omega_\tau^\varrho}}{\varrho^N}&=\lim_{\varrho\to 0}\frac{1}{\varrho^N}\left(\frac{1}{|\Omega|}\int_{\Omega}\Phi_\tau^\varrho(x)\det\nabla\Phi_\tau^\varrho(x)-\frac{1}{|\Omega|}\int_\Omega x\right)+o(\tau)\\
&=-\tau\,\frac{(x_1-x_2)}{|\Omega|}\Bigg(\int_{\{y_1=0\}\cap B_1}\phi(|y|)\Bigg)+o(\tau).
\end{split}
\end{equation}
$\bullet$ {\it Asymmetry}. Recalling \eqref{alphacomodo} and that we have set \(x_\Omega=0\),
\begin{equation*}
\begin{split}
\frac{\alpha(\Omega_\tau^\varrho)-\alpha(\Omega)}{\varrho^N}
&=\frac{1}{\varrho^N}\Bigg( \int_{\Omega\cap B_{\varrho}(x_1)} \Big[|\Phi_\tau^\varrho(x)|\det\nabla\Phi_\tau^\varrho(x)-|x|\Big]\\
&+\int_{\Omega\cap B_{\varrho}(x_2)}\Big[|\Phi_\tau^\varrho(x)|\det\nabla\Phi_\tau^\varrho(x)-|x|\Big]-\Bigg[\int_\Omega \frac{y}{|y|}\Bigg]\cdot x_{\Omega_\tau^\varrho} \Bigg)\\
&+o_\varrho(1)+o(\tau)
\end{split}
\end{equation*}
where we used \eqref{volume} and \eqref{bar}. Here again \(o_\varrho(1)\) tends to \(0\) in \(\varrho\) for fixed \(\tau\), while \(o(\tau)\) is independent on \(\varrho\).
With a computation similar to the previous ones
\begin{equation}\label{dist}
\lim_{\varrho\to 0}\frac{1}{\varrho^N}\Bigg( \int_{\Omega\cap B_{\varrho}(x_i)}|\Phi_\tau^\varrho(x)|\det\nabla\Phi_\tau^\varrho(x)-|x|\Bigg)=\tau(-1)^{i}\,|x_i|\,\int_{\{y_1=0\}\cap B_1} \phi(|y|)+o(\tau).
\end{equation}
for \(i=1,2\). Hence we finally get
\begin{equation}
\label{al1}
\lim_{\varrho\to 0} \frac{\alpha(\Omega_\tau^\varrho)-\alpha(\Omega)}{\varrho^N}=-\tau\,\left(\int_{\{y_1=0\}\cap B_1} \phi(|y|)\right)\, \left(|x_1|-|x_2|+\left[\int_\Omega \frac{y}{|y|} \right]\cdot\frac{x_1-x_2}{|\Omega|} \right)+o(\tau).
\end{equation}
\vskip.2cm\noindent
$\bullet$ {\it Expansion of $\mathcal{G}_{\widehat\eta,j}$}. By \eqref{volume} and \eqref{f-prop}, we get
\[
\frac{1}{\varrho^N}\Big|f_{\widehat \eta}(|\Omega_\tau^\varrho|)-f_{\widehat \eta}(|\Omega|)\Big|\le \frac{1}{\widehat\eta}\Bigg|\frac{|\Omega_\tau^\varrho|-|\Omega|}{\varrho^N}\Bigg|=o_\varrho(1)+o(\tau).
\]
Thus by using \eqref{dir}, \eqref{u}, \eqref{bar} and \eqref{al1} we can infer
\[
\begin{split}
\left(\int_{\{y_1=0\}\cap B_1} \phi(|y|)\right)^{-1}&\frac{\G_{\widehat\eta,j}(\Omega_\tau^\varrho)-\G_{\widehat\eta,j}(\Omega)}{\varrho^N}\\
&\le\tau \Bigg(\big(q_{u}(x_1)\big)^2-\frac{\sigma^2 (\alpha(\Omega)-\eps)}{\sqrt{\eps^2+\sigma^2(\alpha(\Omega)-\eps)^2}}\Big[|x_1|-\Big(\mean{\Omega}\frac{y}
{|y|}dy\Big)\cdot x_1\Big]\\
&-\big(q_{u}(x_2)\big)^2+\frac{\sigma^2 (\alpha(\Omega)-\eps)}{\sqrt{\eps^2+\sigma^2(\alpha(\Omega)-\eps)^2}}\Big[|x_2|-\Big(\mean{\Omega}\frac{y}
{|y|}dy\Big)\cdot x_2\Big]\Bigg)+o_\varrho(1)+o(\tau),
\end{split}
\]
which contradicts the minimality of \(\Omega\) for \(\varrho\), \(\tau\) small.
\end{proof}
\begin{lemma}
\label{lm:qu}
There exist $\sigma_3=\sigma_3(N,R)>0$, $\widehat{j}=\widehat{j}(N,R)$ and $\widehat\delta=\widehat\delta(N,R)>0$ such that for every $j\ge \widehat{j}$ and every $\sigma\le \sigma_3$ the functions $q_{u_j}$ are in $C^\infty(\mathcal{N}_{\widehat\delta}(\partial\Omega_j))$. Moreover
\[
\|q_{u_j}\|_{C^k(\mathcal{N}_{\widehat\delta}(\partial\Omega_j))}\le C(k,N,R),\qquad \mbox{ for every }j\ge \widehat j.
\]  
\end{lemma}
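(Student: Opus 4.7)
My plan is to use the Euler-Lagrange identity of Lemma \ref{lm:EL} as an explicit formula for $q_{u_j}^2$ and to propagate it from $\partial^*\Omega_j$ to a full tubular neighborhood. After translating so that $x_{\Omega_j}=0$, Lemma \ref{lm:EL} supplies constants $\Lambda_j\in\R$ and $c_j\in\R$ with
\[
q_{u_j}(x)^2=\Lambda_j+h_j(x),\qquad h_j(x):=c_j\,\bigl(|x|-A_j\cdot x\bigr),\quad A_j:=\mean{\Omega_j}\frac{y}{|y|}\,dy,
\]
for $\mathcal{H}^{N-1}$-a.e.\ point of $\partial^*\Omega_j$. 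I then define $q_{u_j}$ on $\mathcal{N}_{\widehat\delta}(\partial\Omega_j)$ by the smooth formula $\sqrt{\Lambda_j+h_j(\cdot)}$ and verify that the radicand is uniformly bounded from below.

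The smallness of $c_j$ is crucial. The trivial bound $|t|/\sqrt{\varepsilon^2+t^2}\le 1$, applied with $t=\sigma(\alpha(\Omega_j)-\varepsilon_j)$, gives $|c_j|\le\sigma$; since $\Omega_j\subset B_R$ and $|A_j|\le 1$, this yields $\|h_j\|_{L^\infty(B_R)}\le C(R)\sigma$. Combined with Lemma \ref{weak}, which provides $c^2\le q_{u_j}^2\le C^2$ for constants $c,C$ depending only on $N,R$, evaluating the Euler-Lagrange identity at a generic point of $\partial^*\Omega_j$ forces the uniform two-sided bound $c^2-C(R)\sigma\le\Lambda_j\le C^2+C(R)\sigma$.

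To make $\sqrt{\Lambda_j+h_j}$ smooth we must keep the neighborhood both away from the origin, where $|x|$ is not differentiable, and where the radicand stays positive. By the Kuratowski convergence of Lemma \ref{kuraconv}, for $j\ge\widehat j(N,R)$ the boundary $\partial\Omega_j$ is contained in the annulus $\{3/4<|x|<5/4\}$, so picking $\widehat\delta=1/2$ keeps $\mathcal{N}_{\widehat\delta}(\partial\Omega_j)$ uniformly away from the origin; there $x\mapsto |x|-A_j\cdot x$ is $C^\infty$ with $C^k$-norms bounded purely in terms of $k,N,R$, giving $\|h_j\|_{C^k(\mathcal{N}_{\widehat\delta}(\partial\Omega_j))}\le C(k,N,R)\sigma$. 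Choosing now $\sigma_3=\sigma_3(N,R)$ so that $C(R)\sigma_3\le c^2/4$, we obtain $\Lambda_j+h_j\ge c^2/4>0$ uniformly, whence the smoothness of $q_{u_j}=\sqrt{\Lambda_j+h_j(\cdot)}$ and the uniform $C^k$ bounds follow from the Fa\`a di Bruno formula. The blow-up statement \eqref{blowup} ensures that this smooth function agrees $\mathcal{H}^{N-1}$-a.e.\ on $\partial^*\Omega_j$ with the Borel $q_{u_j}$ from Lemma \ref{weak}, so we are really extending (not redefining) the existing object.

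The genuinely delicate point, and where the argument could fail, is the uniform \emph{lower} bound $c>0$ on $q_{u_j}$ coming from the non-degeneracy estimate of the Alt--Caffarelli theory (Lemmas \ref{lemma1}--\ref{lemma2}, encoded in Lemma \ref{weak}): without it the radicand could a priori vanish somewhere in the tubular neighborhood, and then both the smoothness and the uniformity of the $C^k$ estimates would collapse.
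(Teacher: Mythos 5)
Your proof is correct and takes essentially the same route as the paper: read off the explicit formula $q_{u_j}^2=\Lambda_j+h_j$ from the Euler--Lagrange identity of Lemma \ref{lm:EL}, use Lemma \ref{weak} for uniform two-sided bounds on $q_{u_j}$ (hence on $\Lambda_j$), use the Kuratowski convergence of Lemma \ref{kuraconv} to keep the tubular neighborhood away from the singularity of $x\mapsto|x-x_{\Omega_j}|$, and then shrink $\sigma$ so the radicand stays uniformly positive. Your remark that the strictly positive lower bound on $q_{u_j}$ (ultimately from the Alt--Caffarelli non-degeneracy estimates) is the crux is exactly the right thing to flag.
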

\begin{proof}
From Lemma \ref{lm:EL} we see that, for \(j\) large there exists $\Lambda_{j}\in\mathbb{R}$ such that
\begin{equation}
\label{dernor}
q_{u_j}(x)^2=\Lambda_j+\frac{\sigma^2 (\alpha(\Omega_j)-\eps_j)}{\sqrt{\eps_j^2+\sigma^2(\alpha(\Omega_j)-\eps_j)^2}}\Bigg[|x-x_{\Omega_j}|-\Bigg(\mean{\Omega_j}\frac{y-x_{\Omega_j}}
{|y-x_{\Omega_j}|}dy\Bigg)\cdot x\Bigg],
\end{equation}
for \(\mathcal H^{N-1}\)-almost every \(x\in\partial\{u_j>0\}\). Since 
\[
\left|\frac{\sigma^2 (\alpha(\Omega_j)-\eps_j)}{\sqrt{\eps_j^2+\sigma^2(\alpha(\Omega_j)-\eps_j)^2}}\Bigg[|x-x_{\Omega_j}|-\Bigg(\mean{\Omega_j}\frac{y-x_{\Omega_j}}
{|y-x_{\Omega_j}|}dy\Bigg)\cdot x\Bigg]\right|\le C(N,R)\,\sigma,
\]
and by Lemma \ref{weak} \(q_{u_j}\) is bounded from above and below independently on \(j\), there exists a $\sigma_3=\sigma_3(N,R)$ such that for \(\sigma\le \sigma_3\) we have
 that \(\Lambda_j\) is also bounded from above and below independently on \(j\). Thanks to \eqref{kura} 
\[
|x-x_{\Omega_j}|\ge \frac{1}{2},\qquad \mbox{ for every } j \mbox{ large}.
\]
Hence we can find $\widehat\delta=\widehat\delta(N,R)$ such that
\[
q_{u_j}(x)=\left(\Lambda_j+\frac{\sigma^2 (\alpha(\Omega_j)-\eps_j)}{\sqrt{\eps_j^2+\sigma^2(\alpha(\Omega_j)-\eps_j)^2}}\Bigg[|x-x_{\Omega_j}|-\Bigg(\mean{\Omega_j}\frac{y-x_{\Omega_j}}
{|y-x_{\Omega_j}|}dy\Bigg)\cdot x\Bigg]\right)^{1/2}
\]
is smooth in the neighborhood $\mathcal{N}_{\widehat\delta}(\partial\Omega_j)$ and all its $C^k$ norms are bounded, independently of $j$.
\end{proof}
We are in the position to apply the results\footnote{See also \cite[Appendix]{GS}, where it is sketched how to modify the proofs in \cite{AC} to deal with the case in which the function \(u\) has bounded laplacian on \(\{u>0\}\).} of Sections 7 and 8 of \cite{AC}. We start recalling the following definition, see \cite[Definition 7.1]{AC}.

\begin{definition}Let \(\mu_-,\mu_+\in (0,1]\), \(\kappa>0\). A weak solution \(u\) of \eqref{weaksol} is said to be of class \(F(\mu_-, \mu_+,\kappa)\) in  \(B_{\varrho} (x_0)\) with respect to a direction \(\nu\in \mathbb{S}^{N-1}\) if (see Figure 4.2)

\begin{enumerate}
\item[(a)] \(x_0\in \partial \{u>0\}\) and 
\begin{align*}
&u=0 &\qquad&\text{for \((x-x_0)\cdot \nu\le -\mu_-\varrho\),}\\
&u(x)\ge q_u(x_0)\big[(x-x_0)\cdot \nu-\mu_+\varrho\big] &&\text{for \((x-x_0)\cdot \nu\ge \mu_+\varrho\).}
\end{align*} 
\item[(b)] \(|\nabla u (x_0)|\le q_u(x_0)(1+\kappa)\) in \(B_\varrho(x_0)\) and \(\osc_{B_\varrho(x_0)} q_u \le \kappa\, q_u(x_0)\).
\end{enumerate}
\begin{figure}
\includegraphics[scale=.4]{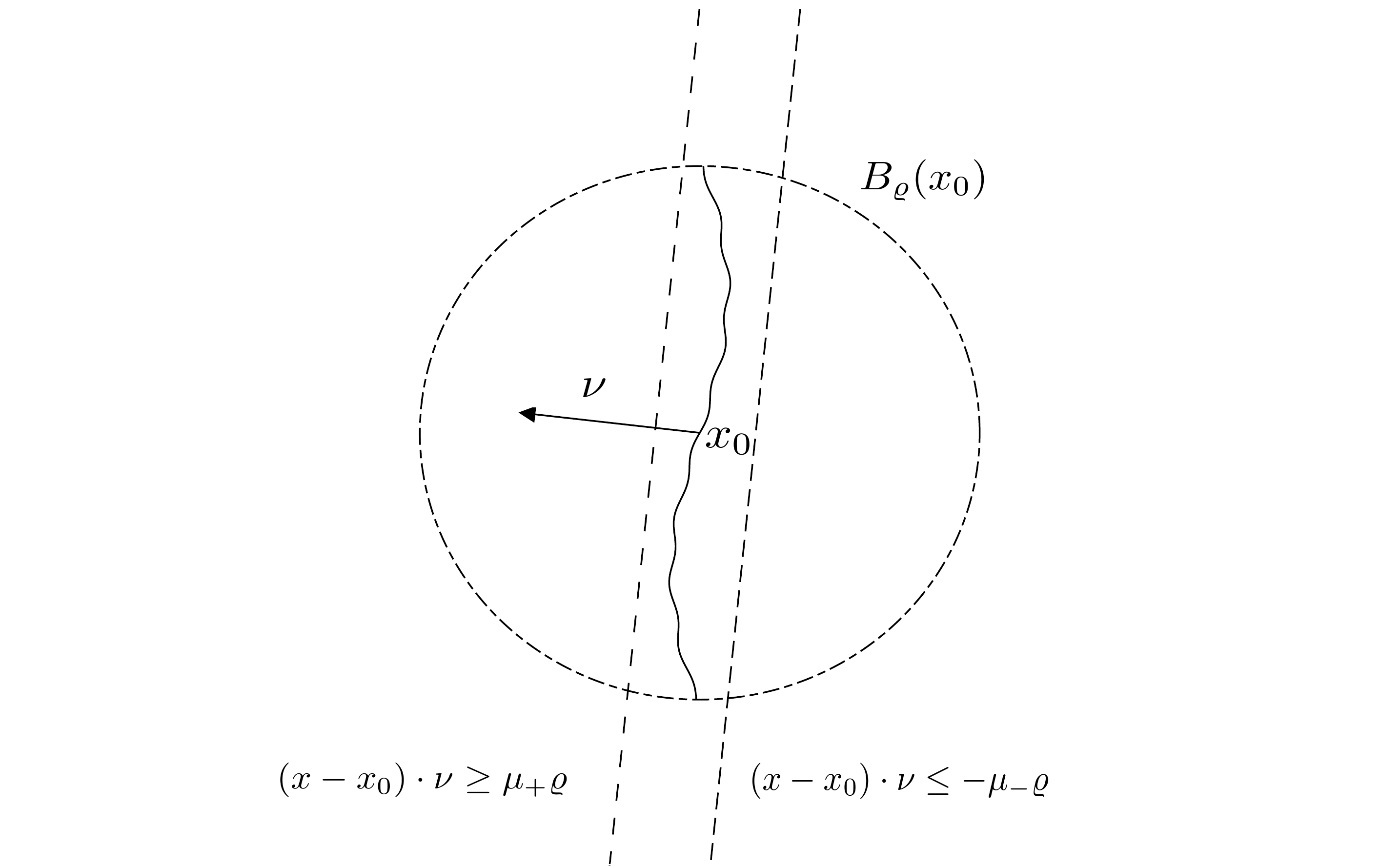}
\label{fig:weak}
\caption{A weak solution of \eqref{weaksol} of class \(F(\mu_-, \mu_+,\kappa)\) in  \(B_{\varrho} (x_0)\) with respect to the direction \(\nu\).}
\end{figure}
\end{definition}
With this definition we can state the main Theorem of \cite{AC}, 
which states that if the free boundary is flat enough, then it is smooth.
\begin{theorem}\cite[Theorem 8.1]{AC}\cite[Theorem 2]{KN}.
\label{thm:ac} Let \(u\) be a weak solution of \eqref{weaksol} in \(B_R\) and assume that \(q_u\) is Lipschitz continuous. There are constants \(\gamma,\bar \mu,\bar \kappa\) and \(C\) depending only on \(\min q_u\), \(\max q_u\), $\mathrm{Lip}(q_u)$, \(R\) and the dimension $N$ such that:
\vskip.1cm\noindent
If \(u\) is of class \(F(\mu,1,+\infty)\) in \(B_{4\varrho}(x_0)\) with respect to some  direction \(\nu\in\mathbb{S}^{N-1}\) with  \(\mu \le \bar \mu\) and  \(\varrho \le \bar \kappa\, \mu^2\), then there exists a \(C^{1,\gamma}\) function \(f: \R^{N-1}\to \R\) with \(\|f\|_{C^{1,\gamma}}\le C \mu\), such that, if we define
 \[
 {\rm graph}_{\nu} f=\big\{x\in\mathbb{R}^N: x\cdot \nu=f(x-(x\cdot \nu)\nu)\big\},
 \]
 then
 \[
 \partial \{u>0\} \cap B_{\varrho}(x_0)=\big(x_0+{\rm graph}_{\nu} f \big)\cap B_{\varrho}(x_0).
 \]
Moreover if \(q_u \in C^{k,\gamma}\) of some $\delta-$neighborhood of $\{u_j>0\}$, then \(f\in C^{k+1,\gamma}\) and   \(\|f\|_{C^{k+1,\gamma}}\le C\,\big(N, R,\|q_u\|_{C^{k,\gamma}}\big)\).
\end{theorem}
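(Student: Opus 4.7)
The plan follows the classical Alt--Caffarelli strategy: prove a \emph{flatness improvement} lemma for weak solutions of \eqref{weaksol} and iterate it geometrically to conclude that the normal direction is H\"older continuous, hence that $\partial\{u>0\}$ is locally a $C^{1,\gamma}$ graph. Higher regularity is then obtained by a partial hodograph transformation reducing the problem to a classical boundary-value problem for an elliptic equation. The hypothesis $\varrho\le\bar\kappa\mu^{2}$ and the dependence of the constants on $\min q_u$, $\max q_u$, $\mathrm{Lip}(q_u)$, $R$ and $N$ enter exclusively through the quantitative step of flatness improvement.

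The heart of the proof is the following dichotomy: there exist $\theta\in(0,1)$ and $\bar\mu,\bar\kappa>0$, depending only on the quantities above, such that if $u\in F(\mu,1,+\infty)$ in $B_{4\varrho}(x_0)$ with respect to $\nu$ and $\mu\le\bar\mu$, $\varrho\le\bar\kappa\,\mu^{2}$, then there is a direction $\nu'\in\mathbb S^{N-1}$ with $|\nu'-\nu|\le C\mu$ for which $u\in F(\mu/2,1,+\infty)$ in $B_{\theta\varrho}(x_0)$ with respect to $\nu'$. The argument is by contradiction and compactness: assume a sequence $u_k$, $\mu_k\downarrow 0$, $\varrho_k\le\bar\kappa\mu_k^{2}$ for which the conclusion fails, and normalize by setting
\[
v_k(x)=\frac{u_k(x_0+\varrho_k x)-q_{u_k}(x_0)\,\big((x\cdot\nu)\varrho_k\big)_+}{q_{u_k}(x_0)\,\mu_k\,\varrho_k}.
\]
The equi-Lipschitz bound and the non-degeneracy from Lemma \ref{lm:prop2}, together with the flatness, force the rescaled free boundary $\partial\{u_k(x_0+\varrho_k\cdot)>0\}$ into the strip $\{|x\cdot\nu|\le\mu_k\}$ and give uniform control on $v_k$; the smallness $\varrho_k\le\bar\kappa\mu_k^{2}$ is precisely what makes the right--hand side $1_{\{u_k>0\}}$ harmless after rescaling (this is the sharp scaling between the linear deviation $\mu_k\varrho_k$ and the Poisson term $\varrho_k^{2}$). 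Passing to the limit and using the free boundary condition $|\nabla u|=q_u$ (encoded in Lemma \ref{weak}), $v_\infty$ solves a Neumann-type linearized problem on $\{x\cdot\nu>0\}$ for which boundary regularity of harmonic functions produces an improvement of flatness, contradicting the assumption for large $k$.

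The improvement of flatness is then iterated at dyadic scales: for every $k\ge 0$, $u\in F(2^{-k}\mu,1,+\infty)$ in $B_{\theta^{k}\varrho}(x_0)$ with respect to a direction $\nu_k$ satisfying $|\nu_{k+1}-\nu_k|\le C\,2^{-k}\mu$. The sequence $\nu_k$ converges geometrically to a limit $\nu_{\infty}(x_0)$ with $|\nu_{\infty}(x_0)-\nu|\le C\mu$, and applying the same argument at every boundary point close to $x_0$ one shows that the normal field is $\gamma$-H\"older with $\gamma=\log_\theta(1/2)$. A standard Reifenberg-type parametrization then provides the $C^{1,\gamma}$ function $f$ with $\|f\|_{C^{1,\gamma}}\le C\mu$ and $\partial\{u>0\}\cap B_\varrho(x_0)=(x_0+\mathrm{graph}_\nu f)\cap B_\varrho(x_0)$.

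For the higher regularity statement, once $\partial\{u>0\}$ is a $C^{1,\gamma}$ graph, one straightens it via a partial hodograph/Legendre transform in the spirit of Kinderlehrer--Nirenberg \cite{KN}: the new unknown (the inverse of one component of $\nabla u$ after a rotation) solves a uniformly elliptic, fully nonlinear equation on a fixed half-ball with a nonlinear oblique condition $|\nabla u|^{2}=q_u^{2}$ on the flat boundary. If $q_u\in C^{k,\gamma}$ near $\partial\{u>0\}$, classical Schauder theory upgrades the solution to $C^{k+1,\gamma}$, whence $f\in C^{k+1,\gamma}$ with the quantitative dependence stated. The principal obstacle in the whole scheme is the flatness improvement of the second paragraph: controlling the perturbative term $1_{\{u>0\}}-q_u\mathcal H^{N-1}\llcorner\partial^{*}\{u>0\}$ under rescaling is what dictates the coupling $\varrho\le\bar\kappa\mu^{2}$ and the explicit dependence of the constants on $\min q_u,\max q_u$ and $\mathrm{Lip}(q_u)$.
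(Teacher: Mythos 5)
The paper does not prove this statement; it cites it directly from Alt--Caffarelli \cite[Theorem 8.1]{AC} for the $C^{1,\gamma}$ conclusion and Kinderlehrer--Nirenberg \cite[Theorem 2]{KN} for the higher-order bootstrap, with a footnote referring to the appendix of \cite{GS} for the modifications needed when $-\Delta u$ equals a bounded inhomogeneity rather than $0$ on $\{u>0\}$. Your sketch is a faithful and essentially correct reconstruction of exactly the strategy those references use: a compactness-based flatness-improvement lemma in which the coupling $\varrho\le\bar\kappa\,\mu^2$ is precisely what makes the inhomogeneous right-hand side subordinate after normalizing by $q_u\mu\varrho$, dyadic iteration yielding H\"older continuity of the inward normal (hence a $C^{1,\gamma}$ graph), and then a partial hodograph/Legendre transform reducing the higher regularity claim to interior and oblique-boundary Schauder theory on a fixed half-ball. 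One small imprecision worth noting: you invoke the Lipschitz bound and non-degeneracy ``from Lemma \ref{lm:prop2},'' but that lemma concerns the specific minimizers $u_j$ produced by the selection principle, whereas Theorem \ref{thm:ac} is stated for an arbitrary weak solution of \eqref{weaksol}; in the cited proofs the equi-Lipschitz and non-degeneracy properties are part of what it means to be a weak solution in the Alt--Caffarelli sense (recall Lemmas 3.2--3.4 of \cite{AC}, mirrored in the paper by Lemmas \ref{lemma1}, \ref{lemma2} and \ref{lm:prop2}), and should be quoted from there rather than from the particular sequence $u_j$.
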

\subsection{Proof of the Selection Principle}
With the aid of Theorem \ref{thm:ac}, we can now prove Proposition \ref{thm:sel}.
\begin{proof}[Proof of Proposition \ref{thm:sel}]
Let \(\Omega_j=\{u_j>0\}\) be the solutions of \eqref{mingj} and assume, up to translations, that \(x_{\Omega_j}=0\). Let  $\bar\mu$ be as in Theorem \ref{thm:ac} and let \(\mu\ll \bar \mu \) to be fixed later.
 By the smoothness of \(\partial B_1\), there exists a \(\varrho(\mu)\) such that for every \(\varrho\le \varrho(\mu)\),  \(\bar x\in \partial B_1\) 
\[
\partial B_1\cap B_{5\varrho}( \bar x) \subset \big\{x:\  |(x-\bar x)\cdot \nu_{\bar x}|\le \mu\,\varrho\big\},  
\]
 where \(\nu_{\overline x}\) is the interior normal to \(\partial B_1\) at \(\bar x\). We can also assume that \(\varrho (\mu) \le \bar \tau\, \mu^2\) where \(\bar \tau \) is as in Theorem \ref{thm:ac}. Since, up to translation, by Lemma \ref{kuraconv} \(\partial \Omega_j\) are converging in the sense of Kuratowski  to \(\partial B_1\), for \(j\) large (depending on \(\mu\)) 
 there exists a point \(x_0 \in \partial \Omega_j\cap B_{\mu \varrho (\mu)} (\bar{x})\) such that 
\[
\begin{split}
\partial \Omega_j \cap B_{4\varrho(\mu)}( x_0) \subset \mathcal N_{\mu\varrho(\mu)} \Big( \partial B_1\cap B_{5\varrho(\mu)}( \bar x) \Big) \subset  \big\{x:\  |(x-x_0)\cdot \nu_{\bar x}|\le 4\mu\varrho(\mu)\big\}.  
\end{split}
\] 
This means that \(u_j\) is of class \(F(\mu,1,+\infty)\) in \(B_{4\varrho(\mu)}(x_0)\) with respect to the direction \(\nu_{\bar x}\) and hence, by our assumptions on \(\mu\) and \(\varrho(\mu)\), Lemma \ref{lm:qu} and Theorem \ref{thm:ac}, \(\partial \Omega_j \cap B_{\varrho(\mu)}(x_0)\) is the graph of a smooth function  with respect to \(\nu_{\bar x}\). Choosing \(\mu\) smaller we see that there exist smooth functions \(g^{\bar x}_j\) with uniformly bounded \(C^{k}\) norms such that
\[
\partial \Omega_j \cap B_{\varrho(\mu)}(\bar x)=\Big\{x+g^{\bar x}_j(x)\,x\, :\, x\in \partial B_1\Big\}\cap  B_{\varrho(\mu)}(\bar x).
\] 
\begin{figure}
\includegraphics[scale=.45]{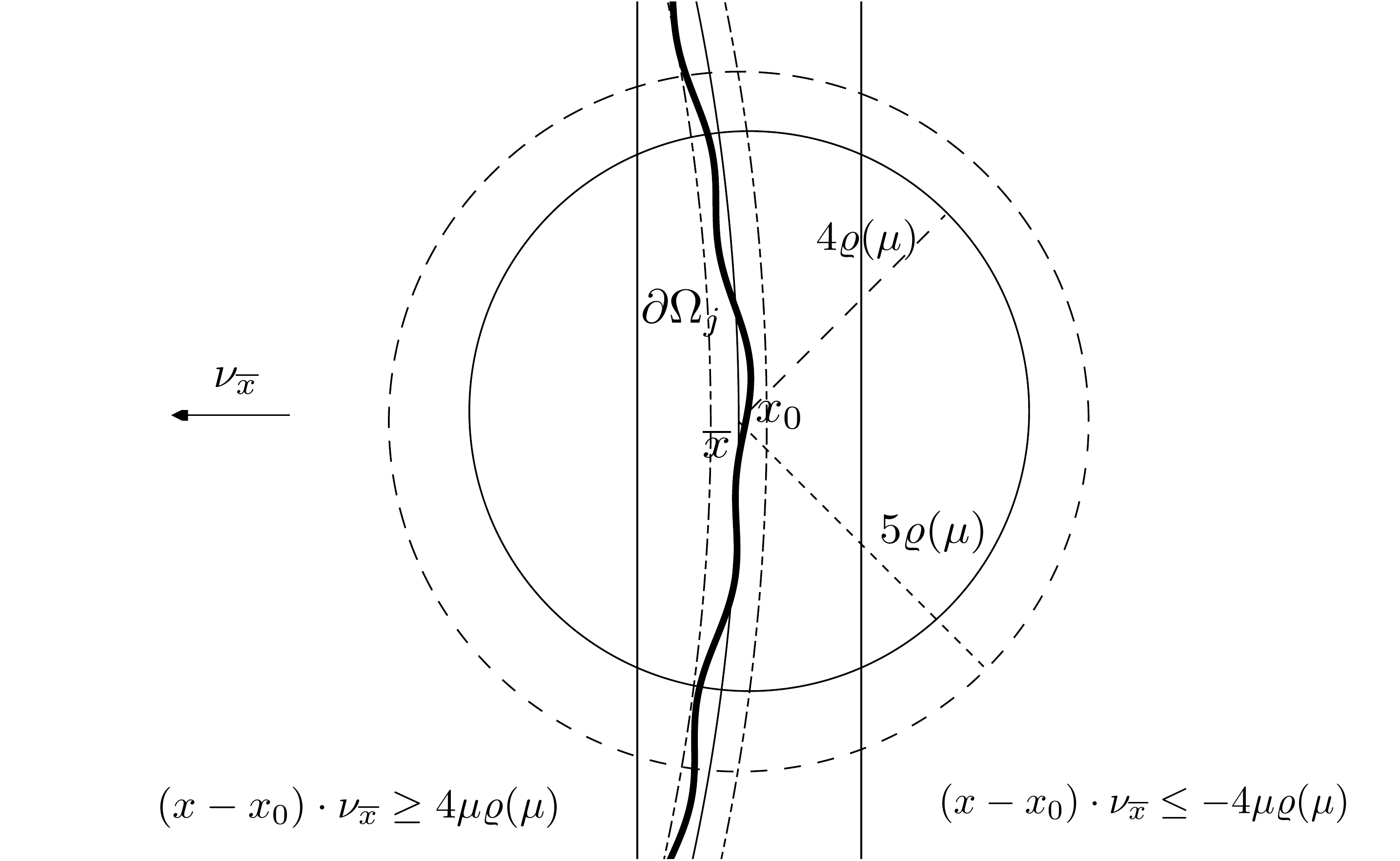}
\caption{The local construction for the proof of Proposition \ref{thm:sel}.}
\end{figure}
Since the balls \(\{B_{\varrho (\mu)}(\bar x)\}_{\overline x\in \partial B_1}\) cover \(\partial B_1\), it is not difficult to see that the above representations holds globally, i.e. for some functions \(g_j\) with uniformly bounded \(C^k\) norms
\begin{equation}\label{normalgraph}
\partial \Omega_j =\Big\{x+g_j(x)\,x\,:\, x\in \partial B_1\Big\}.
\end{equation}
Hence by the Ascoli-Arzel\`a Theorem and \eqref{kura} we obtain \(g_j\to 0\) in \(C^{k-1}(\partial B_1)\).
\par
We now define \(U_j=\lambda_j\,\Omega_j\) where \(\lambda^N_j =|B_1|/|\Omega_j|\). Clearly \(U_j\) still satisfies \(x_{U_j}=0\) and \(|U_j|=|B_1|\). Moreover, by Lemma \ref{lm:prop1} (i) we get $|\lambda_j-1|\le C\, \sigma^4\, \varepsilon_j$. Hence they are smoothly converging to \(B_1\). 
In order to verify \eqref{disfottuta}, we use Lemma \ref{lem:alphaprop} (ii) and Lemma \ref{lm:prop1} (i) to infer
\begin{equation*}
|\alpha(U_j)-\eps_j|\le |\alpha(\Omega_j)-\eps_j|+|\alpha(U_j)-\alpha(\Omega_j)|\le C(\sigma+\sigma^4)\,\eps_j\le C\,\sigma\, \eps_j.
\end{equation*}
Moreover, by the equation \eqref{f-prop} and Lemma \ref{lm:prop1} (iii)
\[
\F_{\widehat \eta}(U_j)-\F_{\widehat \eta} (B_1)\le (1+C\,\sigma^4\,\eps_j)\,\F_{\widehat \eta} (\Omega_j)-\F_{\widehat \eta}(B_1)\le C\,\sigma^4\,\eps_j,
\]
from which \eqref{disfottuta} immediately follows, since $|U_j|=|B_1|$ implies 
\[
\mathcal{F}_{\widehat\eta}(U_j)=E(U_j).
\]
This concludes the proof of the Selection Principle.
\end{proof}

\section{Final step: proof of the Main Theorem} 
\label{sec:second}
In this section we remove the restrictions in Theorem \ref{thm:stability_lim} and we give the proof of   the Main Theorem. For this we need two  preliminary results. The first one  is an $L^\infty$ estimate of the energy function outside a ball in terms of the measure of \(\Omega\) outside a smaller ball, based on a De Giorgi-type iteration technique.
The second one is a sub-optimal version of \eqref{goal} whose proof can be found in \cite{FMP1}.
\begin{lemma}
\label{lem:supest}
Let \(\Omega\) with be an open set with \(|\Omega|=|B_1|\) and let  \(u_\Omega\in W^{1,2}_0(\Omega)\) be its energy function. Then there exists a dimensional constant \(C_7\) such that for every \(R\ge 1\) we have
\begin{equation}\label{eq:supest}
\|u_\Omega\|_{L^\infty(\Omega\setminus B_{R+1})}\le C_7\, |\Omega \setminus B_R|^{1/N}.
\end{equation}
\end{lemma}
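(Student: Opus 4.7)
The plan is to carry out a De Giorgi/Stampacchia-type iteration on truncations of $u_\Omega$, simultaneously on the level parameter $k$ and on the radius $r\in[R,R+1]$ of the exterior region where we cut off. Recall that $-\Delta u_\Omega=1$ in $\Omega$ and $u_\Omega\ge 0$. Since $|\Omega|=|B_1|$, Talenti's comparison principle gives an a priori bound $0\le u_\Omega\le M_0=M_0(N)$; this will only be used to handle the first-order tail coming from $\int (u_\Omega-k)_+$ below. (For $N=2$ we replace $2^\ast$ by any fixed $p>2$ in what follows.)

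For $0\le k<h$ and $R\le r<r'\le R+1$ fix a smooth cutoff $\eta$ with $\eta\equiv 0$ on $B_r$, $\eta\equiv 1$ on $\R^N\setminus B_{r'}$ and $|\nabla\eta|\le 2/(r'-r)$, and test the equation for $u_\Omega$ with $(u_\Omega-k)_+\,\eta^2$. The resulting Caccioppoli inequality, combined with the Sobolev inequality applied to $v=(u_\Omega-k)_+\eta$, yields
\[
\|v\|_{L^{2^\ast}(\R^N)}^2\le C_N\bigg(\int_{A(k,r)}(u_\Omega-k)_+^2\,|\nabla\eta|^2\,dx+\int_{A(k,r)}(u_\Omega-k)_+\,dx\bigg),
\]
where $A(k,r):=\{u_\Omega>k\}\cap(\R^N\setminus B_r)$. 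Using $(u_\Omega-k)_+\le M_0$ on the right and $(h-k)\chi_{A(h,r')}\le v$ on the left, this gives
\[
(h-k)^2\,|A(h,r')|^{2/2^\ast}\le C_N\bigg(\frac{M_0^{\,2}}{(r'-r)^2}+M_0\bigg)\,|A(k,r)|.
\]

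Now iterate. Set $r_j=R+1-2^{-j}$ and $k_j=M(1-2^{-j})$ for a parameter $M$ to be chosen, and put $\varphi_j:=|A(k_j,r_j)|$. Since $k_{j+1}-k_j=M\,2^{-j-1}$ and $r_{j+1}-r_j=2^{-j-1}$, the inequality above with $k=k_j$, $h=k_{j+1}$, $r=r_j$, $r'=r_{j+1}$ yields
\[
\varphi_{j+1}\le \frac{C^{\,j+1}}{M^{2^\ast}}\,\varphi_j^{\,2^\ast/2},
\]
for some $C=C(N)$, where we have absorbed the non-gradient term (a lower-order perturbation) using $M_0\le M_0/(r'-r)^2$ since $r'-r\le 1$. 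Stampacchia's iteration lemma then shows that $\varphi_j\to 0$ provided
\[
\varphi_0=|\Omega\setminus B_R|\le c_N\,M^{\,2^\ast/(2^\ast/2-1)}=c_N\,M^{N},
\]
the exponent being computed from $2^\ast/(2^\ast/2-1)=N$ for $N\ge 3$ (and analogously $>N$, hence still giving an $N$-th root bound, for $N=2$ with the $p$-substitute). Choosing $M=c_N^{-1/N}\,|\Omega\setminus B_R|^{1/N}=:C_7\,|\Omega\setminus B_R|^{1/N}$ forces $|A(M,R+1)|=0$, i.e.
\[
u_\Omega\le C_7\,|\Omega\setminus B_R|^{1/N}\quad\text{a.e.\ on }\Omega\setminus B_{R+1},
\]
which is \eqref{eq:supest}.

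The main subtlety is the simultaneous iteration in $k$ and $r$: iterating only in $k$ (with fixed radii $R$ and $R+1$) already works but wastes the geometric gain, while iterating only in $r$ does not yield the power $1/N$. A secondary point is the harmless presence of the zeroth-order tail $\int(u_\Omega-k)_+\,dx$ on the right of the Caccioppoli estimate: it is controlled via the universal $L^\infty$ bound $M_0(N)$ stemming from $|\Omega|=|B_1|$, and is dominated by the gradient-cutoff term at every stage of the iteration because $r'-r\le 1$. In dimension $N=2$ one replaces $2^\ast$ by an auxiliary exponent $p>2$; the bookkeeping is identical and the final exponent $1/N=1/2$ is recovered.
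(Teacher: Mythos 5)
Your proposal is correct and takes essentially the same route as the paper: a De Giorgi iteration run simultaneously in the truncation level and in the outer radius, combining the Caccioppoli estimate obtained from testing with $(u_\Omega-k)_+\eta^2$, the Sobolev inequality, and Talenti's $L^\infty$ bound to control the zeroth-order tail. The only cosmetic difference is that you invoke Sobolev in the form $\|v\|_{L^{2^*}}\le C\|\nabla v\|_{L^2}$ and then must swap $2^*$ for an auxiliary $p>2$ when $N=2$, whereas the paper absorbs a Hölder factor and works directly with $\|v\|_{L^2}^2\le C|\{v\neq 0\}|^{2/N}\|\nabla v\|_{L^2}^2$, which produces the recursion $a_{k+1}\le C\,16^k M^{-2}a_k^{1+2/N}$ uniformly in $N\ge 2$; both normalizations yield the same threshold exponent $N$ for $M$.
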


\begin{proof} Notice that \eqref{eq:supest} trivially holds if \(|\Omega\setminus B_R|=0\), hence we assume that \(|\Omega  \setminus B_R|>0\).
Let us define
\begin{equation*}
R_k=R+1-2^{-k},\qquad k\in\mathbb{N},
\end{equation*}
so that \(R_0=R\) and \(\lim_{k\to\infty} R_k=R+1\) and let us consider the following family of radial cut-off functions \(\varphi_k(x)=\phi_k(|x|)\), where \(\phi_k=0\) on \([0,R_{k-1}]\), \(\phi_k=1\) on \([R_k,+\infty)\) and it is  linear in between. 
Let us also define the following family of levels 
\begin{equation*}\label{sk}
s_k=M\,|\Omega \setminus B_R|^{1/N}\,(1-2^{-k}),
\end{equation*}
where \(M\) is a constant which  will be choosen later. Since \(u_\Omega\) satisfies
\begin{equation*}
\int \nabla u_\Omega\cdot \nabla v \,dx=\int v\, dx \qquad \mbox{ for every }\, v \in W^{1,2}_0(\Omega),
\end{equation*}
by inserting the test function \(v_k=\varphi^2_k\, (u_\Omega-s_k)_+ \in W^{1,2}_0(\Omega)\) standard computations lead 
%to the following Caccioppoli-type inequality
\begin{equation}
\label{energy}
\int \big|\nabla(\varphi_k (u_\Omega-s_k)_+)\big|^2\, dx=\int \varphi^2_k\, (u_\Omega-s_k)_+\, dx +\int |\nabla \varphi_k|^2\,(u_\Omega-s_k)_+^2\, dx.
\end{equation}
By  \cite[Theorem 1]{Ta}, we have 
\begin{equation}\label{talenti}
\|u_\Omega\|_{L^\infty(\Omega)}\le \|u_{B_1}\|_{L^\infty(B_1)}\le C(N).
\end{equation}
Since \(|\nabla \varphi_k|\le 2^{k}\) and \(0\le \varphi_k\le 1\), by applying Sobolev inequality  \eqref{energy} and \eqref{talenti} we  infer
\begin{equation}
\label{enel}
\begin{split}
 \int \big(\varphi_k (u_\Omega-s_k)_+\big)^2\, dx&\le C\,\big|\big\{\varphi_k (u_\Omega-s_k)_+>0\big\}\big|^{2/N}\int \big|\nabla(\varphi_k (u_\Omega-s_k)_+)\big|^2\, dx\\
 &\le  C\big|\big\{\varphi_k (u_\Omega-s_k)_+>0\big\}\big|^{2/N}\,\left(\int \varphi^2_k (u_\Omega-s_k)_+\, dx\right.\\
&\left. +\int |\nabla \varphi_k|^2(u_\Omega-s_k)_+^2\, dx\right)\\
 &\le C\,  4^k \big|\big\{\varphi_k (u_\Omega-s_k)_+>0\big\}\big|^{1+2/N},
\end{split}
\end{equation}
where \(C\) depends only on \(N\). Since
\[
\big\{(u_\Omega-s_{k+1})_+>0\big\}\cap (\Omega\setminus B_{R_k})\subset \big\{(u_\Omega-s_{k})_+>s_{k+1}-s_k\big\}\cap(\Omega\setminus B_{R_k}),
\]
and \(s_{k+1}-s_k=M\, |\Omega\setminus B_R|^{1/N}\, 2^{-(k+1)}\), we obtain from \eqref{enel} 
\[
\begin{split}
\left|\big\{(u_\Omega-s_{k+1})_+>0\big\}\cap (\Omega\setminus B_{R_k})\right|&\le \frac{4^{k+1}\,(s_{k+1}-s_k)^2}{M^2 |\Omega\setminus B_R|^{2/N}}\,\big|\big\{(u_\Omega-s_k)_+>s_{k+1}-s_k\big\}\cap (\Omega\setminus B_{R_k})\big|\\
&\le \frac{4^{k+1}}{M^2 |\Omega\setminus B_R|^{2/N}}\,  \int \varphi_k^2 (u_\Omega-s_k)_+^2\, dx\\
& \le C \frac{16^{k}}{M^2 |\Omega\setminus B_R|^{2/N}}\,  \big|\big\{(u_\Omega-s_k)_+>0\big\}\cap (\Omega\setminus B_{R_{k-1}})\big|^{1+2/N}.
\end{split}
\]
By defining
\[
a_k=\frac{\big|\big\{(u_\Omega-s_k)_+>0\big\}\cap (\Omega\setminus B_{R_{k-1}})\big|}{|\Omega\setminus B_R|}\le 1,\qquad k\in\mathbb{N},
\]
we obtain the following  non-linear recursive equation for \(a_k\):
\[
a_{k+1}\le \frac{C}{M^2}\, 16^k\, a_{k}^{1+2/N}\qquad k\ge 1.
\]
Choosing \(M\) such that \(C/M^2=16^{-N}\) one easily sees by induction that
\[
a_k\le \left(\frac{1}{16^{N/2}}\right)^{k-1},\qquad k\in\mathbb{N},
\]
which clearly implies that $\lim_{k\to\infty} a_k=0$. By using the definition of $a_k$, this gives
\[
|\{(u-s_\infty)_+>0\}\cap (\Omega\setminus B_{R+1})|=0,
\]
with $s_\infty=M\, |\Omega\setminus B_R|^{1/N}=C^{1/2}\, 4^N\, |\Omega\setminus B_R|^{1/N}$. This gives the desired estimate \eqref{eq:supest}.
\end{proof}

\begin{lemma}\label{lm:nonsharp}
There exists a constant \(C_8=C_8(N)\) such that for every open set \(\Omega\subset\mathbb{R}^N\) with finite measure, there holds
\begin{equation}
\label{eq:nonsharp}
E(\Omega)\, |\Omega|^{-\frac{N+2}{N}}- E(B)\, |B|^{-\frac{N+2}{N}}\ge \frac{1}{C_8}\, \mathcal{A}(\Omega)^4.
\end{equation}
\end{lemma}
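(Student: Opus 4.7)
The plan is to transcribe the Fusco-Maggi-Pratelli strategy of \cite{FMP1} (originally written for the first Dirichlet eigenfunction) to the energy function $u_\Omega$. By scaling one may assume $|\Omega|=|B_1|$, so that the target inequality becomes $E(\Omega)-E(B_1)\ge c_N\,\mathcal A(\Omega)^4$. Setting $u=u_\Omega$ and letting $u^\ast\in W^{1,2}_0(B_1)$ be its Schwarz symmetrization, equimeasurability together with \eqref{energyequivalent} gives
\[
E(\Omega)-E(B_1)=\tfrac12\Bigl(\int_{B_1}u_{B_1}\,dx-\int_{B_1}u^\ast\,dx\Bigr),
\]
and Talenti's pointwise comparison $u^\ast\le u_{B_1}$ on $B_1$ makes the right-hand side a non-negative defect. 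It therefore suffices to estimate this defect quantitatively from below.

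The quantitative defect is produced by a Talenti-type argument at the level of the distribution function $\mu(t):=|\{u>t\}|$. Using the coarea formula, the equation $-\Delta u=1$ and Cauchy-Schwarz one derives
\[
-\mu'(t)\,\mu(t)\ge P\bigl(\{u>t\}\bigr)^{\!2},\qquad \text{for a.e.\ } t\in(0,\|u\|_\infty),
\]
and inserting on the right-hand side the sharp quantitative isoperimetric inequality of \cite{FMPiso}, applied to each superlevel set, yields the strict improvement
\[
-\mu'(t)\ge N^2\,\omega_N^{2/N}\,\mu(t)^{(N-2)/N}\bigl(1+c_N\,\mathcal A(\{u>t\})^2\bigr).
\]
Comparing with the analogous ODE satisfied by the radial profile of $u_{B_1}$ and integrating over $t$ gives, after a standard rearrangement calculation, the weighted lower bound
\[
E(\Omega)-E(B_1)\ge c_N\int_0^{\|u\|_\infty}\mathcal A\bigl(\{u>t\}\bigr)^{\!2}\,\omega(t)\,dt,
\]
for an explicit strictly positive weight $\omega$ supported near $t=0$.

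The last step is to transfer the asymmetry information from the superlevel sets to $\Omega$ itself. Using the dimensional $L^\infty$-bound $\|u\|_\infty\le C_N$ (Talenti's inequality, as in \eqref{talenti}) and the elementary estimate $|\Omega\setminus\{u>t\}|\le C_N\,t$ coming from Markov applied to $u$, one selects a threshold $t^\ast\sim \mathcal A(\Omega)^2$ at which $\mathcal A(\{u>t^\ast\})\ge \tfrac12\,\mathcal A(\Omega)$; plugging this into the weighted integral above closes the argument with exponent $4$. A preliminary compactness argument reduces everything to the small-asymmetry regime, the non-degenerate regime being handled by the strict Saint-Venant inequality \eqref{sv}. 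The obvious main obstacle is the propagation step: the loss of a factor $\mathcal A(\Omega)^2$ in converting the asymmetry of a superlevel set into that of $\Omega$ is precisely what forces the sub-optimal exponent, and it is exactly this loss that the selection-principle strategy developed in the remainder of the paper is designed to circumvent.
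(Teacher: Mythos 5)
The paper does not actually reprove this lemma: its proof is a single observation, namely that via the identity $E(\Omega)=-\tfrac12\,\lambda_{2,1}(\Omega)^{-1}$ the statement is exactly \cite[Theorem 1]{FMP1} in the case $p=2$, $q=1$. So any self-contained argument, such as yours, is by definition a different route; what you have written is an attempt to reconstruct the Fusco--Maggi--Pratelli proof that the citation hides. The first half of your sketch is essentially sound: the reduction to $|\Omega|=|B_1|$, the identity
\begin{equation*}
E(\Omega)-E(B_1)=\tfrac12\Bigl(\smallint_{B_1}u_{B_1}-\smallint_{B_1}u^\ast\Bigr),
\end{equation*}
Talenti's pointwise comparison $u^\ast\le u_{B_1}$, the Cauchy--Schwarz step $-\mu'(t)\,\mu(t)\ge P(\{u>t\})^2$, and the insertion of the sharp quantitative isoperimetric inequality of \cite{FMPiso} on each superlevel set are all correct and are indeed the engine of the FMP proof.

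The gap is in the propagation step, and it is not a cosmetic one. You claim the estimate $|\Omega\setminus\{u>t\}|\le C_N\,t$ ``from Markov applied to $u$,'' but Markov's inequality bounds $|\{u>t\}|$ from above by $t^{-1}\|u\|_1$; it gives no control whatsoever on the \emph{sublevel} set $\{0<u\le t\}$. Worse, the asserted bound is actually a \emph{lower} bound in disguise: from $u^\ast\le u_{B_1}$ one gets $\mu(t)\le|\{u_{B_1}>t\}|=\omega_N(1-2Nt)^{N/2}$, hence $|\Omega\setminus\{u>t\}|\ge\omega_N\bigl(1-(1-2Nt)^{N/2}\bigr)\sim N\omega_N\,t$, which points the wrong way. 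An upper bound of the claimed form would require a nondegeneracy estimate $|\nabla u|\ge c>0$ near $\partial\Omega$ together with a bound on $P(\{u>t\})$, i.e.\ precisely the kind of regularity information that, for general open sets, is unavailable at this stage of the paper and that the selection principle is designed to manufacture. For a general $\Omega$ with small Saint-Venant deficit but thin protuberances, $|\{0<u\le t\}|$ is not controlled by $t$ with a dimensional constant. Consequently the selection of the threshold $t^\ast\sim\mathcal A(\Omega)^2$ with $\mathcal A(\{u>t^\ast\})\ge\tfrac12\mathcal A(\Omega)$ is not justified as written. The actual argument in \cite{FMP1} handles this transfer of asymmetry through a more delicate analysis of the distribution function (not a single threshold picked by a pointwise Markov bound), which is exactly where the sub-optimal exponent $4$ is produced. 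If you want a self-contained proof you will need to replace the ``Markov'' sentence with that machinery; otherwise, the honest route is the one the paper takes, i.e.\ reduce to $\lambda_{2,1}$ and cite \cite{FMP1}.
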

\begin{proof}
Recalling  that 
\[
E(\Omega)=-\frac{1}{2}\, \frac{1}{\lambda_{2,1}(\Omega)},
\]
where \(\lambda_{2,1}(\Omega)\) is defined in \eqref{autolavoro}, this result corresponds to taking $p=2$ and $q=1$ in \cite[Theorem 1]{FMP1}. 
\end{proof}
In what follows, we set 
\begin{equation}
\label{deficint}
D(\Omega)=E(\Omega)\, |\Omega|^{-\frac{N+2}{N}}- E(B)\, |B|^{-\frac{N+2}{N}},
\end{equation}
for notational simplicity.
\begin{lemma}
\label{bounded}
There exist constants \(C_9=C_9(N)\), \(\overline \delta=\overline \delta(N)>0\) and \(d=d(N)\) such that for every open set  \(\Omega\) with \(|\Omega|=|B_1|\) and \(D(\Omega)\le \overline  \delta(N)\), we can find another open set \(\widetilde \Omega\) with \(|\widetilde \Omega|=|B_1|\), \({\rm diam}(\widetilde \Omega)\le d\) and such that
\begin{equation}\label{eq:bounded}
\A(\Omega)\le \A(\widetilde \Omega)+C_9\,D(\Omega)\qquad\mbox{ and }\qquad D(\widetilde \Omega)\le C_9\, D(\Omega).
\end{equation}
\end{lemma}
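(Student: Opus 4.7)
The plan is to construct $\widetilde\Omega$ as a rescaled truncation of $\Omega$. For a dimensional constant $R\ge 3$ to be chosen, set
\[
\widetilde\Omega_0:=\Omega\cap B_R,\qquad \widetilde\Omega:=\lambda\,\widetilde\Omega_0,\qquad \lambda:=\bigl(|B_1|/|\widetilde\Omega_0|\bigr)^{1/N},
\]
so that $|\widetilde\Omega|=|B_1|$. Applying Lemma \ref{lm:nonsharp} to the hypothesis $D(\Omega)\le\overline\delta$ gives $\A(\Omega)\le (C_8 D(\Omega))^{1/4}$, so after translating coordinates to place the Fraenkel-optimal ball at the origin we have $|\Omega\setminus B_R|\le|\Omega\setminus B_1|\le C D(\Omega)^{1/4}$ small; hence $\lambda\le 2$ and $\mathrm{diam}(\widetilde\Omega)\le 2\lambda R\le 4R=:d(N)$.

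Both conclusions of \eqref{eq:bounded} will reduce to the tail bound $|\Omega\setminus B_R|\le C\,D(\Omega)$. Indeed, for any ball $B'$ of volume $|B_1|$ the inequality $|\Omega\Delta B'|\le|\widetilde\Omega_0\Delta B'|+|\Omega\setminus B_R|$ yields, after taking infima and accounting for the small rescaling factor $\lambda$, $\A(\widetilde\Omega)\ge\A(\Omega)-C|\Omega\setminus B_R|$. For the deficit, I would use $v:=\eta\, u_\Omega$ as a test function in the variational definition of $E(\widetilde\Omega_0)$, with $\eta$ a smooth cutoff equal to $1$ on $B_{R-1}$ and $0$ outside $B_R$. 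A standard integration by parts using $-\Delta u_\Omega=1$ gives
\[
E(\widetilde\Omega_0)-E(\Omega)\le \tfrac{1}{2}\int(1-\eta)^2 u_\Omega\,dx+\tfrac{1}{2}\int u_\Omega^2\,|\nabla\eta|^2\,dx\le C\,|\Omega\setminus B_{R-2}|^{1+1/N},
\]
where the last step invokes the sup estimate of Lemma \ref{lem:supest}; by scale invariance of $D$, this upgrades to $D(\widetilde\Omega)\le D(\Omega)+C|\Omega\setminus B_{R-2}|^{1+1/N}$.

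To close the argument, the Saint-Venant inequality \eqref{sv} applied to $\widetilde\Omega$ gives $E(\widetilde\Omega)\ge E(B_1)$. Expanding $E(\widetilde\Omega)=\lambda^{N+2}E(\widetilde\Omega_0)$ to first order in $|\Omega\setminus B_R|/|B_1|$ and combining with the previous deficit bound, elementary manipulations yield the non-linear recursion
\[
|\Omega\setminus B_R|\le C\,D(\Omega)+C\,|\Omega\setminus B_{R-2}|^{1+1/N}.
\]
Iterating this along the odd radii $1,3,5,\ldots,R$, starting from the initial bound $|\Omega\setminus B_1|\le C D(\Omega)^{1/4}$ from Lemma \ref{lm:nonsharp}, gives $|\Omega\setminus B_{1+2k}|\le C D(\Omega)^{(1+1/N)^k/4}$; choosing $R=1+2k$ with $k=k(N)$ the smallest integer such that $(1+1/N)^k\ge 4$ (a dimensional quantity) makes the right-hand side $\le C D(\Omega)$ and closes both estimates. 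The main obstacle is precisely this bootstrap: the naive bound from Lemma \ref{lm:nonsharp} alone gives only $|\Omega\setminus B_R|\le C D(\Omega)^{1/4}$, and upgrading this to the sharp linear tail bound while keeping $R$ dimensional is what requires the coupling between Lemma \ref{lem:supest} (forcing $u_\Omega$ to be small—hence the tail mass ``wasteful''—on $\Omega\setminus B_R$) and the Saint-Venant optimality of the ball.
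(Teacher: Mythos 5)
Your proof is correct and follows essentially the same strategy as the paper: truncate at a dimensional radius, rescale to restore the volume constraint, control the energy loss of truncation via a cutoff test function and the $L^\infty$ tail estimate of Lemma \ref{lem:supest}, then close a nonlinear recursion for the tail volumes $b_k=|\Omega\setminus B_k|/|B_1|$ using the Saint-Venant inequality together with the a priori sub-optimal bound $b_1\lesssim D(\Omega)^{1/4}$ from Lemma \ref{lm:nonsharp}. The only presentational difference is in how the bootstrap is organized: the paper introduces the stopping index $\overline K=\max\{k:b_k\ge 2\hat C D(\Omega)\}$ and shows $\overline K\le K(N)$, then truncates at the $\Omega$-dependent (but uniformly bounded) radius $\overline K+3$; you instead iterate the recursion $b_R\le C\,D(\Omega)+C\,b_{R-2}^{1+1/N}$ a fixed dimensional number of times $k(N)$ (the first $k$ with $(1+1/N)^k\ge 4$) and truncate at the fixed radius $R=1+2k$. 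Both give the same result with dimensional constants; your version is marginally tidier to state, while the paper's has the small advantage of truncating where the tail has actually dropped below the threshold $2\hat C D(\Omega)$, which makes the final asymmetry estimate slightly more explicit. In your iteration one should keep track that the multiplicative constant grows at each step, but since there are only $k(N)$ steps this causes no harm, and you should note (as the paper does) that the recursion only holds after shrinking $\overline\delta(N)$ enough to make the linearization of $(1-b_R)^{(N+2)/N}$ and the absorption of the $b_R^2$ term legitimate.
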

\begin{proof}
Let us assume that the ball achieving the asymmetry is given by \(B_1\). By using this and the quantitative information \eqref{eq:nonsharp} we have, choosing \(\overline \delta (N)\) sufficiently small,
\begin{equation}\label{delta}
\frac{|\Omega\setminus B_1|}{|B_1|}\le \A(\Omega)\le C_8^{1/4}\,D(\Omega)^{1/4}\le C_8^{1/4}\,\overline \delta(N)^{1/4}<1.
\end{equation}
Let us now estimate the energy of \(\Omega\cap B_{k+2}\) for \(k\ge 0\). For every $k\in\mathbb{N}$, let \(\varphi_k\) be the cut-off function defined by
\[
\varphi_k(x)=\min\{1,(k+2-|x|)_+\},\qquad x\in\mathbb{R}^N,
\] 
which is supported in $B_{k+2}$ and is equal to $1$ in $B_{k+1}$. Then clearly 
\[
u_k=\varphi_k\, u_\Omega\in W^{1,2}_{0}(\Omega \cap B_{k+2}).
\] 
Hence, by using the equation satisfied by \(u\) and recalling that by \eqref{energyequivalent},
\[
E(\Omega)=-\frac 1 2\int u_\Omega\, dx, 
\]
we get
\begin{equation}
\label{torsionk}
\begin{split}
E(\Omega\cap B_{k+2})\le \frac 1 2 \int |\nabla u_k|^2 -\int u_k &=\frac 1 2\int \nabla u_\Omega\cdot \nabla (u_\Omega\, \varphi_k^2)\, dx+\frac{1}{2}\,\int |\nabla \varphi_k|^2 u^2 -\int \varphi_k u_\Omega\\
&=\frac 1 2 \int u_\Omega\,\varphi_k^2\, dx +\frac{1}{2}\,\int |\nabla \varphi_k|^2\, u_\Omega^2\, dx -\int \varphi_k\, u_\Omega\, dx\\
&= -\frac 1 2 \int u_\Omega+\frac{1}{2}\,\int (1-\varphi_k)^2\, u_\Omega\, dx+\frac{1}{2}\, \int |\nabla \varphi_k|^2 u_\Omega^2\, dx \\
&\le E(\Omega)+\frac{1}{2}\, |\Omega\setminus B_{k+1}\,|\left[\sup_{\Omega \setminus B_{k+1}} u_\Omega +\sup_{\Omega \setminus B_{k+1}} u_\Omega^2\right]\,.
\end{split}
\end{equation}
By setting 
\begin{equation}\label{bk}
b_k =\frac{|\Omega\setminus B_{k}|}{|B_1|},\qquad \mbox{ for every } k\ge 1,
\end{equation}
we have $b_k\le 1$ and of course $b_{k+1}\le b_k$. Hence  by recalling \eqref{eq:supest} we get 
\begin{equation}
\label{torsionk2}
\begin{split}
E(\Omega\cap B_{k+2})&\le E(\Omega)+C\, b_{k+1}\,\left[ b^{1/N}_{k}+b_{k}^{2/N}\right]\\
&\le E(\Omega)+C\, b_{k}^{1+\frac{1}{N}}.
\end{split}
\end{equation}
Using the definition of \(b_k\) and the Saint-Venant inequality \eqref{sv}, equation \eqref{torsionk2} implies
\[
\begin{split}
E(B_1)\big(1-b_{k+2})^{\frac{N+2}{N}}&=E(B_1)\,|B_1|^{-\frac{N+2}{N}}\,|\Omega \cap B_{k+2}|^{\frac{N+2}{N}}\\
&\le E(\Omega\cap B_{k+2})\le  E(\Omega)+C\, b_{k}^{1+\frac{1}{N}}\\
&= E(B_1)+D(\Omega)+C\, b^{1+1/N}_{k},
\end{split}
\]
where in the last estimate we used the very definition of deficit.
Hence, recalling that \(E(B_1)<0\), and assuming \(\overline \delta(N)\) sufficiently small we finally get
\begin{equation}
\label{rec}
b_{k+2}\le \frac{2N}{N+2}\, \Big(1-(1-b_{k+2})^{\frac{N+2}{N}}\Big)\le \widehat{C}\left(D(\Omega)+ b^{1+1/N}_{k}\right),
\end{equation}
for a suitable constant \(\widehat C\) depending only on the dimension \(N\). Let us now define
\begin{equation}
\label{defK}
\overline K=\max\{k\in \mathbb N: \text{ such that \(b_k\ge 2\,\widehat  C\, D(\Omega)\)}\},
\end{equation}
which exists since \(b_k\to 0\) as \(k\to \infty\). We claim that if we choose \(\overline\delta(N)\) sufficiently small then 
\begin{equation}
\label{kappasegnato}
\overline K\le K(N),
\end{equation}
for some  $K(N)$ depending  only on $N$. By noticing that for \(k+2\le \overline K\),  \eqref{rec} and \eqref{delta} give
\[
b_{k+2}\le 2\,( b_{k+2} -\widehat  C\,D(\Omega))\le 2\,\widehat  C\,b_{k}^{1+1/N}\le b_{k}^{1+1/2N},
\]
if $\overline \delta(N)\ll 1$.
Now, by iteration one easily notices that, as long as \(2\le k\le \overline K\),
\[
b_{k}\le b_1^{1+\frac{k}{2N}}. 
\]
Hence, by \eqref{delta} and \eqref{defK}, we deduce
\[
2\,\widehat  C\, D(\Omega)\le b_{\overline K} \le b_1^{1+\frac{\overline K}{2N}}\le \big(C\,D(\Omega)\big)^{\frac{1}{4}+\frac{\overline K}{8N}},
\]
which gives the desired estimate \eqref{kappasegnato}. 

By the definition of \(\overline K\), \eqref{defK}, and recalling the definition of \(b_k\),  we immediately see that
\begin{equation}\label{eq1}
|\Omega \cap B_{\overline K+3}|\ge |B_1|(1-b_{\overline K+1})\ge |B_1|-\widehat  C\,|B_1|\, D(\Omega),
\end{equation}
while \eqref{torsionk2} gives
\begin{equation}\label{eq2}
E(\Omega\cap B_{\overline K+3})\le E(\Omega)+C\, b_{\overline K+1}^{1+\frac{1}{N}}\le E(\Omega)+C\,D(\Omega)^{1+1/N}.
\end{equation}
Let us set
\[
 \widetilde \Omega=\frac{\Omega\cap B_{\overline K+3}}{r},\qquad \mbox{ where } \quad r=\left( \frac{\left|\Omega\cap B_{\overline K+3}\right|} {|B_1|}\right)^\frac{1}{N}.
\]
Clearly \(|\widetilde \Omega|=|B_1|\), moreover equation \eqref{eq1} implies
\begin{equation}
\label{raggetto}
1-\widehat C\, D(\Omega)\le r^N\le 1.
\end{equation}
Hence \eqref{kappasegnato}, \eqref{eq2} and \eqref{raggetto} give
\[
{\rm diam}(\widetilde \Omega)\le d(N)\qquad\text{and}\qquad D(\widetilde \Omega)\le CD(\Omega).
\]
In order to conclude the proof we only have to show that the estimate on the asymmetry in \eqref{eq:bounded} holds true. For this let \(B_1(x_0)\) be the optimal ball for \(\widetilde \Omega\) and let \(r\) be as above, so that \(|B_r(x_0)|=|\Omega \cap B_{\overline K+3}|\). By using \eqref{raggetto} and that $b_{\overline K+3}<2\, \widehat C\, D(\Omega)$ by definition of $\overline K$, we obtain 
\[
\begin{split}
|B_1|\,\A(\Omega)&\le |\Omega\Delta B_1(x_0/r)|\\
&\le |( \Omega\cap B_{\overline K+3})\Delta \Omega| + |( \Omega\cap B_{\overline K+3})\Delta B_r(x_0/r)|+|B_r(x_0/r)\Delta B_1(x_0/r)|\\
&\le C\,|B_1|\,D(\Omega)+r^N\, |\widetilde \Omega\Delta B_1(x_0)|+\omega_N\,(1-r^N)\\
&\le |B_1|\,\A(\widetilde \Omega)+C\,D(\Omega),
\end{split}
\]
which concludes the proof of the Lemma.
\end{proof}

We can finally prove the Main Theorem.
\begin{proof}[Proof of the Main Theorem] 
By Proposition \ref{prop:trick} it is enough to show that there exists a dimensional constant \(\sigma_E\) such that 
\[
E(\Omega)\, |\Omega|^{-\frac{N+2}{N}}- E(B)\, |B|^{-\frac{N+2}{N}}\ge \sigma_E\,\mathcal{A}(\Omega)^2\qquad \text{for all sets \(\Omega\).}
\]
Also, since the above inequality is scaling invariant, we can assume that $|\Omega|=|B_1|$, without loss of generality.
For notational simplicity, we keep on using the notation $D$ introduced in \eqref{deficint}.
\par
Let \(\overline \delta(N)\le 1\) be the constant appearing in Lemma \ref{bounded}. If $D(\Omega)\ge \overline  \delta(N)$, then since $\mathcal{A}(\Omega)<2$ we get
\[
D(\Omega)\ge \frac{\overline \delta(N)}{4}\, \mathcal{A}(\Omega)^2.
\]
Thus we can suppose that $D(\Omega)<\overline \delta(N)$. Thanks to Lemma \ref{bounded}, we can construct a new open set \(\widetilde \Omega\) with \({\rm diam }(\widetilde \Omega)\le d(N)\) and $|\widetilde\Omega|=|B_1|$, which satisfies \eqref{eq:bounded}.  Up to a translation we have $\widetilde\Omega\subset B_{d(N)}$, then by 
applying Theorem \ref{thm:stability_lim} with $R=d(N)$ we have
\[
D(\widetilde \Omega)\ge \widehat \sigma(d(N))\,\alpha(\widetilde \Omega),\qquad \mbox{ if }\quad\alpha(\widetilde\Omega)\le \widehat \varepsilon(d(N)).
\]
By appealing to Lemma \ref{lem:alphaprop} (i) and to the very definition of Fraenkel asymmetry, the previous implies
\begin{equation}
\label{piccolo}
D(\widetilde \Omega)\ge \frac{\widehat \sigma(d(N))\, |B_1|^2}{C_1}\,\mathcal{A}(\widetilde \Omega)^2,\qquad \mbox{ if }\quad\alpha(\widetilde\Omega)\le \widehat \varepsilon(d(N)).
\end{equation}
On the other hand, in the case \(\alpha(\widetilde \Omega)>\widehat \varepsilon(d(N))\), let $B$ be the ball (of radius $1$) such that $\mathcal{A}(\widetilde\Omega)=|\widetilde\Omega\Delta B|/|B|$. Since \(\widetilde \Omega\subset B_{d(N)}\) it is immediate to check that \(B\subset B_{d(N)+3}\), hence Lemma \eqref{lem:alphaprop} (ii) and \(\alpha(B)=0\)  give 
\[
\frac{\widehat \varepsilon(d(N))}{C_2(d(N)+3)\, |B_1|}<\frac{\alpha(\widetilde \Omega)}{C_2(d(N)+3)\, |B_1|}\le \, \frac{|\widetilde\Omega\Delta B|}{|B_1|}=\mathcal{A}(\widetilde\Omega).
\]
Estimate \eqref{eq:nonsharp} now implies\footnote{We note that in this part of the argument it is not really need the power law relation between \(D(\Omega)\) and \(\mathcal A(\Omega)\) given by \eqref{eq:nonsharp}, it would be sufficient to know that \(\A(\Omega)\to 0\) as \(D(\Omega)\to 0\).}
\begin{equation}
\label{grande}
D(\widetilde \Omega)\ge \frac{\A(\widetilde\Omega)^4}{C_8}\ge\left(\frac{\widehat \varepsilon(d(N))}{C_2(d(N)+3)\, |B_1|}\right)^2\, \frac{\A(\widetilde \Omega)^2}{C_8}, \qquad \mbox{ if }\quad\alpha(\widetilde\Omega)>\widehat \varepsilon(d(N)).
\end{equation}
Setting 
\[
c=\min\left\{\frac{\widehat \sigma(d(N))\, |B_1|^2}{C_1},\,\frac{1}{C_8}\left(\frac{\widehat \varepsilon(d(N))}{C_2(d(N)+3)\, |B_1|}\right)^2\right\},
\] 
thanks to \eqref{eq:bounded}, \eqref{piccolo} and \eqref{grande} and since  \(\overline \delta(N)\le 1\) we get
\[
\begin{split}
\frac {c}{2} \mathcal A(\Omega)&\le \frac {c}{2}\, \Big(\mathcal A(\widetilde \Omega)+C_9\,D(\Omega)\Big)^2\le c\,\mathcal A (\widetilde \Omega)^2+c\, C_9^2 \,D(\Omega)^2\\
&\le D(\widetilde \Omega)+c\, C_9^2\, D(\Omega)^2\le C_9\, D(\Omega)+c\, C_9^2\, D(\Omega)^2\le C_9 \,(1+c\,C_9)\,D(\Omega).
\end{split}
\]
If we now define 
\[
\sigma_E=\min\left\{\frac{c}{2\, C_9\, (1+c\, C_9)},\frac{\overline\delta(N)}{4}\right\},
\]
we get the desired conclusion.
\end{proof}

\appendix

\section{Proof of Lemma \ref{dambrin}}
\label{sec:shape}
In this Appendix we briefly sketch the proof of Lemma \ref{dambrin}, referring to \cite{Da} for more details.  We start with the following:
\begin{lemma}\label{vectorfield}Given \(\gamma\in (0,1]\) there exists \(\delta_4=\delta_4(N,\gamma)>0\) and a modulus of continuity \(\widehat\omega\) such that   for  every nearly spherical set \(\Omega\)  parametrized by \(\varphi\) with \(\|\varphi\|_{C^{2,\gamma}(\partial B_1)}\le \delta_4\) and \(|\Omega|=|B_1|\), we can find an autonomous vector field \(X_{\varphi}\) for which the following holds true:
\begin{enumerate}
\item[(i)] \({\rm div} X_{\varphi} =0\) in a \(\delta_4\)-neighborhood of \(\partial B_1\);
\item [(ii)]if \(\Phi_t:=\Phi(t,x)\) is the flow of \(X_{\varphi}\), i.e.
\[
\partial_t \Phi_t=X_{\varphi}(\Phi_t) \qquad \Phi_0(x)=x,
\] 
then \(\Phi_1(\partial B_1)=\partial \Omega\) and \(|\Phi_t(B_1)|=|B_1|\) for all \(t\in [0,1]\). 

\item[(iii)] We have
\begin{equation}\label{c2close}
\big\|\Phi_t-{\rm Id}\big\|_{C^{2,\gamma}}\le  \widehat \omega \big(\|\varphi\|_{C^{2,\gamma}(\partial B_1)}\big)\qquad \mbox{ for every } \, t\in [0,1],
\end{equation}
\begin{equation}
\label{hunmezzovicine}
\big\|\varphi-(X_\varphi \cdot \nu_{ B_1})\big\|_{H^{1/2}(\partial B_1)}\le \widehat  \omega \big(\|\varphi\|_{L^\infty(\partial B_1)}\big) \|\varphi\|_{H^{1/2}(\partial B_1)}.
\end{equation}
and 
\begin{equation}
\label{vero}
(X\cdot\theta)\circ\Phi_t-X \cdot \nu_{B_1}=(X \cdot \nu_{B_1})\,\psi_t\qquad \text{on \( \partial B_1\)}
\end{equation}
with \(\|\psi_t\|_{C^{2,\gamma}(\partial B_1)}\le \widehat \omega (\|\varphi\|_{C^{2,\gamma}(\partial B_1)})\).
\end{enumerate}
\end{lemma}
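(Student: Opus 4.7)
My plan is to give an essentially explicit formula for $X_\varphi$ by working in polar coordinates. Set
\[
\psi(\theta):=\frac{(1+\varphi(\theta))^N-1}{N},\qquad \theta\in\partial B_1,
\]
and observe that the volume constraint $|\Omega|=|B_1|$ is equivalent to $\int_{\partial B_1}\psi\,d\mathcal{H}^{N-1}=0$. For $x\in\mathbb{R}^N\setminus\{0\}$ define
\[
X_\varphi(x):=\frac{\psi(x/|x|)}{|x|^{N-1}}\cdot\frac{x}{|x|},
\]
possibly multiplied by a radial cutoff $\eta(|x|)$ equal to one on $\{1/2\le|x|\le 3/2\}$ so that $X_\varphi$ has compact support in $\mathbb{R}^N\setminus\{0\}$ while still coinciding with the above expression in a neighborhood of $\partial B_1$.

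For properties (i) and (ii), the identity $\mathrm{div}(f(r,\theta)\,x/|x|)=r^{1-N}\partial_r(r^{N-1}f)$, applied to $f(r,\theta)=\psi(\theta)/r^{N-1}$, yields $\mathrm{div}\,X_\varphi\equiv 0$ on $\{1/2<|x|<3/2\}$, which gives (i) provided $\delta_4<1/2$. Along each ray of direction $\theta$ the flow ODE reduces to $\dot r=\psi(\theta)/r^{N-1}$ with $r(0)=1$, and its explicit solution is $r(t,\theta)=(1+Nt\,\psi(\theta))^{1/N}$. At $t=1$ this gives $r(1,\theta)^N=(1+\varphi(\theta))^N$, so $\Phi_1(\partial B_1)=\partial\Omega$; for general $t\in[0,1]$, since $\Phi_t$ preserves rays monotonically, the image $\Phi_t(B_1)$ is the star-shaped set $\{\rho\theta:\rho<r(t,\theta)\}$, so
\[
|\Phi_t(B_1)|=\int_{\partial B_1}\frac{r(t,\theta)^N}{N}\,d\mathcal{H}^{N-1}=|B_1|+t\int_{\partial B_1}\psi\,d\mathcal{H}^{N-1}=|B_1|,
\]
by the volume normalization on $\psi$. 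The smallness of $\|\varphi\|_{C^{2,\gamma}}$ ensures that the orbit emanating from $r_0=1$ stays inside $\{1/2<|x|<3/2\}$, so the cutoff never intervenes on the relevant region.

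For property (iii), the closed form $\Phi_t(x)=r(t,x/|x|)\,x/|x|$, together with $r(t,\theta)-1=t\,\psi(\theta)+O(\psi(\theta)^2)$ and $\psi-\varphi=O(\varphi^2)$, immediately gives $\|\Phi_t-\mathrm{Id}\|_{C^{2,\gamma}}\le C\,\|\varphi\|_{C^{2,\gamma}}$, i.e.\ \eqref{c2close} with $\widehat\omega(s)=Cs$. For \eqref{hunmezzovicine}, note that $\nu_{B_1}(x)=x$ on $\partial B_1$ and hence $X_\varphi\cdot\nu_{B_1}=\psi$; the binomial expansion gives $\varphi-\psi=-\tfrac{N-1}{2}\varphi^2+\sum_{k\ge 3}a_k\varphi^k$, and the standard multiplier estimate $\|fg\|_{H^{1/2}(\partial B_1)}\le C(\|f\|_{L^\infty}\|g\|_{H^{1/2}}+\|f\|_{H^{1/2}}\|g\|_{L^\infty})$ applied inductively yields $\|\varphi^k\|_{H^{1/2}}\le C_k\,\|\varphi\|_{L^\infty}^{k-1}\|\varphi\|_{H^{1/2}}$, whence $\|\varphi-\psi\|_{H^{1/2}}\le C\|\varphi\|_{L^\infty}\|\varphi\|_{H^{1/2}}$ for small $\|\varphi\|_{L^\infty}$. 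Finally, since the flow is radial, for $x\in\partial B_1$ the image $\Phi_t(x)=r(t,x)x$ has the same angular coordinate $x$; so, reading $\theta$ as the radial unit vector at the current point,
\[
(X_\varphi\cdot\theta)\circ\Phi_t(x)=\frac{\psi(x)}{r(t,x)^{N-1}}\quad\text{and}\quad(X_\varphi\cdot\nu_{B_1})(x)=\psi(x),
\]
so their difference equals $(X_\varphi\cdot\nu_{B_1})(x)\,\psi_t(x)$ with $\psi_t(x):=r(t,x)^{-(N-1)}-1$; since $r(t,\cdot)$ is a smooth function of $\psi$ equal to $1$ at $\psi=0$, we obtain $\|\psi_t\|_{C^{2,\gamma}}\le\widehat\omega(\|\varphi\|_{C^{2,\gamma}})$, establishing \eqref{vero}.

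The main obstacle is not conceptual---the construction is completely explicit. The only delicate technical point is the multiplicative $H^{1/2}$ estimate used for \eqref{hunmezzovicine}: because Definition \ref{def:hunmezzo} employs the harmonic-extension norm rather than a Gagliardo seminorm, the cleanest route is to pass through the equivalent Gagliardo characterization of $H^{1/2}(\partial B_1)$, prove the product inequality there (which is standard), and then return via the norm equivalence recorded in \eqref{normequiv}.
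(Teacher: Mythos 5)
Your proposal is correct and follows essentially the same explicit construction as the paper: the radial, ray-preserving vector field $X_\varphi(\rho,\theta)=\frac{(1+\varphi(\theta))^N-1}{N\rho^{N-1}}\,\theta$, with flow $r(t,\theta)=\bigl(1+t\bigl((1+\varphi(\theta))^N-1\bigr)\bigr)^{1/N}$, suitably cut off away from $\partial B_1$; properties (i)--(ii), \eqref{c2close}, and \eqref{vero} then follow by direct computation exactly as you argue.

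The one place you diverge is the proof of \eqref{hunmezzovicine}. You propose to establish the multiplier bound $\|\varphi^h\|_{H^{1/2}}\le C_h\|\varphi\|_{L^\infty}^{h-1}\|\varphi\|_{H^{1/2}}$ by passing through the Gagliardo seminorm, using the product inequality $\|fg\|_{H^{1/2}}\lesssim\|f\|_{L^\infty}\|g\|_{H^{1/2}}+\|f\|_{H^{1/2}}\|g\|_{L^\infty}$ inductively. The paper works directly with Definition~\ref{def:hunmezzo}: since harmonic functions minimize the Dirichlet integral among extensions with given trace, $\int_{B_1}|\nabla H(\varphi^h)|^2\le\int_{B_1}|\nabla(H(\varphi))^h|^2$, and expanding $\nabla(H(\varphi))^h=h\,H(\varphi)^{h-1}\nabla H(\varphi)$ together with the maximum principle $\|H(\varphi)\|_{L^\infty(B_1)}\le\|\varphi\|_{L^\infty(\partial B_1)}$ immediately gives the same bound without ever leaving the harmonic-extension norm. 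Both routes work; the paper's is slightly shorter because it avoids invoking the Gagliardo equivalence and the $H^{1/2}$ algebra property as external facts, instead exploiting the structure of the norm being used.
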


\begin{proof}The construction is general and can be done in the neighborhood of every sufficiently smooth set, see \cite[Proposition 1]{Da} and \cite[Theorem 3.7]{AFM}. In the case of the ball we can however give an explicit expression for \(X_{\varphi}\) and for its flow \(\Phi_t\) in a neighborhoodd of \(\partial B_1\).  More precisely in polar coordinates, \(x=\varrho\,\theta\), \(\varrho=|x|\), \(\theta=x/|x| \in \mathbb \partial B_1\), we define for \(|\varrho-1|\le \delta_4\ll1\),
\begin{equation}\label{explicit}
X_{\varphi}(\rho,\theta)=\frac{\big(1+\varphi(\theta)\big)^N-1}{N\rho^{N-1}}\,\theta \qquad \Phi_t(\rho,\theta)= \Big[\rho^{N}+t\Big(\big(1+\varphi(\theta)\big)^N-1\Big)\Big]^{1/N}\theta,
\end{equation}
and we globally extend the  vectorfield (and hence the flow) in order to satisfy \eqref{c2close}.
In this way points (i) (ii) and equations \eqref{c2close} and \eqref{vero} follow by direct computation. For equation \eqref{hunmezzovicine} notice that, on \(\partial B_1\)
\[
\varphi-X_\varphi \cdot \nu_{\partial B_1}=\frac{1}{N}\,\sum_{h=2}^N \binom{N}{h} \varphi^h.
\]
Since  harmonic functions minimize the Dirichlet energy with respect to their own boundary data we get (recalling the notations of Definition \ref{def:hunmezzo})
\[
\int_{B_1} \left|\nabla H\big( \varphi^h\big)\right|^2\, dx\le \int_{B_1} \left|\nabla\big(H(\varphi)\big)^h\right|^2\, dx\qquad \mbox{ for every } h\ge 1.
\]
Hence, a straightforward computation gives
\[
\big\|\varphi-(X_\varphi \cdot \nu_{\partial B_1})\big\|_{H^{1/2}(\partial B_1)}\le C(N)\Big(\|H(\varphi)\|_{L^\infty(B_1)}+\|\varphi\|_{L^\infty(\partial B_1)}\Big)\|\varphi\|_{H^{1/2}(\partial B_1)}.
\]
Since, by the maximum principle, \(\|H(\varphi)\|_{L^\infty(B_1)}\le \|\varphi\|_{L^\infty(\partial B_1)}\), we conclude the proof.
\end{proof}

With \(\Phi_t\) and \(X_{\varphi}\) as above, we now  set \(\Omega_t=\Phi_t(B_1)\) and 
\[
e(t):=E(\Omega_t)=-\frac{1}{2}\int_{\Omega_t} |\nabla u_t|^2\,dx,
\]
where \(u_t=u_{\Omega_t}\) is the energy function of \(\Omega_t\), i.e.
\begin{equation}\label{energyt}
\begin{cases}
-\Delta u_t=1\quad&\text{in \(\Omega_t\)}\\
u_t=0&\text{on \(\partial \Omega_t.\)}
\end{cases}
\end{equation} 
We want to compute \(e'(t)\) and \(e''(t)\). For this we recall that the map \(t\mapsto u_t\) is differentiable, see for instance \cite[Theorem 5.3.1]{HP}, and that its derivative \(\dot{u}_t\) satisfies
\begin{equation}\label{energyder}
\begin{cases}
-\Delta \dot u_t=0\quad&\text{in \(\Omega_t\)}\\
\dot u_t=-\nabla u_t \cdot X_{\varphi}&\text{on \(\partial \Omega_t\)}.
\end{cases}
\end{equation}
Recalling Hadamard formula (see \cite[Section 5.2]{HP}), 
for every $f$ sufficiently smooth
\begin{equation*}
 \frac{d}{dt} \int_{\Omega_t} f(t,x) \, dx=\int_{\Omega_t} \partial_t f(t,x) \,dx+\int_{\partial \Omega_t} f(t,x) (X_{\varphi}\cdot \nu_{\partial\Omega_t})\, d\mathcal H^{N-1},
\end{equation*}
we can now compute (dropping the subscript \(\varphi\) for notational simplicity)
\begin{equation}\label{derivata1}
\begin{split}
e'(t)&=-\int_{\Omega_t}\nabla u_t\cdot \nabla \dot u_t\, dx-\frac 1 2 \int_{\partial \Omega_t} |\nabla u _t|^2 (X\cdot \nu_{\Omega_t}) \, d\mathcal H^{N-1}\\
&=-\frac 1 2 \int_{\partial \Omega_t} |\nabla u _t|^2 (X\cdot \nu_{\Omega_t}) \, d\mathcal H^{N-1}=-\frac{1}{2} \int_{ \Omega_t}{\rm div}\Big( |\nabla u _t|^2 X\Big) \, d x,
\end{split}
\end{equation}
where we have used that since  \(\dot u_t\) is harmonic and \(u_t\in W^{1.2}_0(\Omega_t)\), their gradient are \(L^2\) orthogonal. Differentiating again, using Hadamard formula and that  \(X\) is autonomous we get
\begin{equation}
\label{der2.1}
\begin{split}
e''(t)=&-  \int_{\partial \Omega_t} \big( \nabla u_t\cdot \nabla \dot u_t \big)\big(X \cdot \nu_{ \Omega_t}\big) d\mathcal H^{N-1}-\frac 1 2 \int_{\partial \Omega_t} {\rm div}\Big( |\nabla u _t|^2 X\Big)\big(X \cdot \nu_{ \Omega_t}\big) \, d\mathcal H^{N-1}.
\end{split}
\end{equation}
Since \(u_t\) is positive and vanishes on \(\partial \Omega_t\),
\begin{equation}
\label{chepalle}
\nabla u_t =-|\nabla u_t|\nu_{\Omega_t}\qquad \text{on \(\partial \Omega_t\)},
\end{equation} 
hence
\[
\nabla u_t\cdot \nabla \dot u_t=-|\nabla u_t| \cdot(\nabla \dot u_t\cdot \nu_{\Omega_t}\big).
\]
Moreover, by \eqref{energyder} and \eqref{chepalle}, \(\dot u_t=|\nabla u_t|\big(X \cdot \nu_{ \Omega_t}\big)\), so that equation \eqref{der2.1} becomes
\begin{equation}\label{der2.2}
\begin{split}
e''(t)&=  \int_{\partial \Omega_t}\dot u_t\, \partial_{\nu} \dot u_t\,d\mathcal H^{N-1}- \int_{\partial \Omega_t}\big(X \cdot \nu_{ \Omega_t}\big)\big(\nabla^2 u_t[\nabla u_t]\cdot X\big)\,d\mathcal H^{N-1}
-\frac 1 2 \int_{\partial \Omega_t} |\nabla u _t|^2{\rm div} (X)\big(X\cdot \nu_{ \Omega_t}\big) \\
&= \int_{\partial \Omega_t}\dot u_t\, \partial_{\nu} \dot u_t\,d\mathcal H^{N-1}- \int_{\partial \Omega_t}\big(X \cdot \nu_{ \Omega_t}\big)\big(\nabla^2 u_t[\nabla u_t]\cdot X\big)\,d\mathcal H^{N-1}
\end{split}
\end{equation}
where \(\partial_{\nu} \dot u_t=\nabla \dot u_t \cdot \nu_{\Omega_t}\) is the normal derivative of \(\dot u_t\) and we have used that \({\rm div} X=0\) in a neighborhood of \(\partial B_1\) (where \(\partial \Omega_t\) is contained). Since on \(\partial \Omega_t=\{u_t=0\}\) we have
\[
-1=\Delta u_t=-|\nabla u|\mathscr H_{\partial \Omega_t}+\nabla^2 u_t[\nu_{\partial\Omega_t}]\cdot \nu_{\partial\Omega_t}
\]
where \(\mathscr H_{\partial \Omega_t}\) is the mean curvature of \(\partial \Omega_t\) computed with respect to the exterior normal. Hence, we finally get, taking also into account \eqref{chepalle} and defining \(X^\tau=X-(X\cdot \nu)\,\nu\) the tangential component of \(X\),
\begin{equation}
\label{der2fine}
\begin{split}
e''(t)&=\int_{\partial \Omega_t}\dot u_t\, \partial_{\nu} \dot u_t\,d\mathcal H^{N-1}+ \int_{\partial \Omega_t}|\nabla u_t|\big(X \cdot \nu_{ \Omega_t}\big)^2\big(\nabla^2 u_t[\nu_{\partial\Omega_t}]\cdot \nu_{\partial\Omega_t}\big)\,d\mathcal H^{N-1}\\
&- \int_{\partial \Omega_t}\big(X \cdot \nu_{ \Omega_t}\big)\big(\nabla^2 u_t[\nabla u_t]\cdot X^\tau\big)\,d\mathcal H^{N-1}\\
&=\int_{ \Omega_t}|\nabla \dot u_t|^2\, dx - \int_{\partial \Omega_t}\big(X \cdot \nu_{ \Omega_t}\big)^2|\nabla u_t|\,d\mathcal H^{N-1}+\int_{\partial \Omega_t}\big(X \cdot \nu_{ \Omega_t}\big)^2|\nabla u_t|^2\mathscr  H_{\partial \Omega_t}\,d\mathcal H^{N-1}\\
&- \int_{\partial \Omega_t}\big(X \cdot \nu_{ \Omega_t}\big)\big(\nabla^2 u_t[\nabla u_t]\cdot X^\tau\big)\,d\mathcal H^{N-1}.
\end{split}
\end{equation}
Notice that  in the last equality we have used Green formula in the first term (recall the \(\dot u_t\) is harmonic). We now observe that \(\mathscr  H_{\partial B_1}=(N-1)\), \(X^{\tau}=0\) on \(\partial B_1\),
\[
u_0=u_{B_1}=\frac{1-|x|^2}{2N} \qquad\text{in \(B_1\)},
\]
and that\footnote{Here we are using the notations of Definition \ref{def:hunmezzo}}  \(\dot u_0=H(X\cdot \nabla u_0)\). By using these facts in equation \eqref{der2fine} at $t=0$, we get
\begin{equation}
\label{palladerivata}
\begin{split}
e''(0)&=\int_{ B_1} |\nabla H(X\cdot\nabla u_0)|^2\,dx
+\int_{\partial B_1} \Big[(N-1)\, |\nabla u_0|-1\Big]\, |\nabla u_0|\,\big(X\cdot \nu_{ B_1})^2\,d\mathcal H^{N-1}\\
&=\frac {1}{N^2} \Bigg(\int_{ B_1} |\nabla H(X\cdot\nu_{ B_1})|^2\,dx-\int_{ B_1}\big(X\cdot\nu_{B_1})^2\,d\mathcal H^{N-1}\Bigg).
\end{split}
\end{equation}
\begin{lemma}
\label{dambrin2}
Let \(\gamma\in (0,1]\), there exist \(\delta_5=\delta_5(N,\gamma)\) and a modulus of continuity \(\widetilde \omega\) such that if \(\Omega\), \(\varphi\), \(X_\varphi\) and \(\Phi_t\) are as in Lemma \ref{vectorfield} and \(\|\varphi\|_{C^{2,\gamma}}\le \delta_5\), then
\begin{equation}
\label{evai}
|e''(t)-e''(0)|\le\widetilde \omega\big(\|\varphi\|_{C^{2,\gamma}}\big)\big\|X_\varphi\cdot \nu_{ B_1}\|^2_{H^{1/2}(\partial B_1)}.
\end{equation}
\end{lemma}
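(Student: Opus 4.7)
The plan is to compare the formula \eqref{der2fine} for $e''(t)$ term by term with its $t=0$ counterpart \eqref{palladerivata}. To this end I would pull back every boundary or volume integral on $\Omega_t$ to an integral on $B_1$ or $\partial B_1$ via the diffeomorphism $\Phi_t$, which by \eqref{c2close} is $C^{2,\gamma}$-close to the identity, and then show that each resulting integrand is close to its value at $t=0$ with a quantitative rate controlled by $\|\varphi\|_{C^{2,\gamma}}$. The crucial preliminary is a uniform Schauder-type estimate for the pulled-back energy function $\widetilde u_t := u_t\circ\Phi_t$ on $B_1$: the equation \eqref{energyt} transforms into a uniformly elliptic equation with $C^{1,\gamma}$ coefficients that perturb $-\Delta$ by an amount $O(\|\varphi\|_{C^{2,\gamma}})$, so classical elliptic regularity yields $\widetilde u_t\to u_0 = (1-|x|^2)/(2N)$ in $C^{2,\gamma}(\overline{B_1})$ at the same rate. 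In particular $|\nabla u_t|\circ\Phi_t \to 1/N$, $\nu_{\Omega_t}\circ\Phi_t \to \nu_{B_1}$ and $\mathscr H_{\partial\Omega_t}\circ\Phi_t \to N-1$ in $C^{1,\gamma}(\partial B_1)$.

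With that in hand, the three boundary integrals in \eqref{der2fine} pose little difficulty. After change of variables and use of \eqref{vero}, each becomes the integral over $\partial B_1$ of a product of $(X\cdot\nu_{B_1})^2$ with a bounded continuous factor that differs from its $t=0$ value by a term $O(\|\varphi\|_{C^{2,\gamma}})$ in $C^0$. The last boundary term contains the tangential component $X^\tau$, which vanishes identically on $\partial B_1$ by the explicit radial formula \eqref{explicit} (there $X_\varphi$ is a scalar multiple of $\theta=\nu_{B_1}$), so after pullback it is of size $O(\|\varphi\|_{C^{2,\gamma}})$ to start with. All three contributions to $|e''(t)-e''(0)|$ are thus bounded by $\widetilde\omega(\|\varphi\|_{C^{2,\gamma}})\,\|X_\varphi\cdot\nu_{B_1}\|^2_{L^2(\partial B_1)}$, and a fortiori by the right-hand side of \eqref{evai}.

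The main obstacle is the Dirichlet term $\int_{\Omega_t}|\nabla\dot u_t|^2\,dx$. By \eqref{energyder} and \eqref{chepalle}, $\dot u_t$ is the harmonic extension to $\Omega_t$ of the boundary datum $g_t := |\nabla u_t|\,(X_\varphi\cdot\nu_{\Omega_t})$ on $\partial\Omega_t$, so variational minimality identifies $\int_{\Omega_t}|\nabla\dot u_t|^2\,dx$ with the squared $H^{1/2}(\partial\Omega_t)$ seminorm of $g_t$. Since the pullback of the induced surface metric under $\Phi_t$ is $C^{1,\gamma}$-close to the standard metric on $\partial B_1$, this seminorm differs from the analogous $H^{1/2}(\partial B_1)$ seminorm of $g_t\circ\Phi_t$ by a multiplicative factor $1 + O(\|\varphi\|_{C^{2,\gamma}})$. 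It thus remains to compare $g_t\circ\Phi_t$ with $g_0 = (1/N)(X_\varphi\cdot\nu_{B_1})$ in $H^{1/2}(\partial B_1)$: writing, via \eqref{vero},
\begin{equation*}
g_t\circ\Phi_t - g_0 = \bigl(|\nabla u_t|\circ\Phi_t - 1/N\bigr)(X_\varphi\cdot\nu_{B_1}) + |\nabla u_t|\circ\Phi_t\,(X_\varphi\cdot\nu_{B_1})\,\psi_t,
\end{equation*}
and exploiting that multiplication by a function whose $C^{1,\gamma}$-norm tends to zero induces a bounded operator on $H^{1/2}(\partial B_1)$ with small norm, the first paragraph gives
\begin{equation*}
\|g_t\circ\Phi_t - g_0\|_{H^{1/2}(\partial B_1)} \le \widetilde\omega\bigl(\|\varphi\|_{C^{2,\gamma}}\bigr)\,\|X_\varphi\cdot\nu_{B_1}\|_{H^{1/2}(\partial B_1)}.
\end{equation*}
Expanding $\|g_t\circ\Phi_t\|^2_{H^{1/2}(\partial B_1)}$ around $\|g_0\|^2_{H^{1/2}(\partial B_1)} = (1/N^2)\int_{B_1}|\nabla H(X_\varphi\cdot\nu_{B_1})|^2\,dx$ and combining with the boundary estimates yields \eqref{evai}.
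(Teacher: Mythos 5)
Your overall strategy — pull back to $B_1$ via $\Phi_t$, use Schauder estimates to get $C^{2,\gamma}$ closeness of $u_t\circ\Phi_t$ to $u_0$, bound the boundary integrals pointwise, and treat the Dirichlet term $\int_{\Omega_t}|\nabla\dot u_t|^2$ by comparing the boundary data $g_t\circ\Phi_t$ with $g_0$ in $H^{1/2}$ — matches the paper's proof almost step for step. The one place you differ is cosmetic: the paper writes the transformed elliptic PDE for $v_t=\dot u_t\circ\Phi_t$, namely $\mathrm{div}(M_t\nabla v_t)=0$ with $M_t$ close to the identity, and invokes linear elliptic estimates to control $\|\nabla v_t-\nabla\dot u_0\|_{L^2}$; you instead appeal to the variational characterization of the Dirichlet energy of a harmonic extension as a squared $H^{1/2}$ seminorm and compare seminorms on nearby domains. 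These are dual formulations of the same estimate and both reduce to bounding $\|g_t\circ\Phi_t - g_0\|_{H^{1/2}(\partial B_1)}$ — the paper's quantity $\|(\nabla u_t\cdot X)\circ\Phi_t-\nabla u_0\cdot X\|_{H^{1/2}}$ is exactly this, up to sign, via \eqref{chepalle}.

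One small slip worth fixing: your decomposition of $g_t\circ\Phi_t - g_0$ is not quite complete. Since $X$ is radial, $(X\cdot\nu_{\Omega_t})\circ\Phi_t = \big((X\cdot\theta)\circ\Phi_t\big)\big((\theta\cdot\nu_{\Omega_t})\circ\Phi_t\big)$, and \eqref{vero} only controls the first factor; the factor $(\theta\cdot\nu_{\Omega_t})\circ\Phi_t$ (which is $1$ at $t=0$ but not identically $1$ for $t>0$) appears in the paper's \eqref{norm} and must also be accounted for. Since it is $C^{1,\gamma}$-close to $1$ by the geometry estimates you already establish in your first paragraph, it can be absorbed into the multiplier bound, so the argument goes through — but the term should be made explicit, otherwise the displayed identity for $g_t\circ\Phi_t-g_0$ is false as written.
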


\begin{proof}We start from \eqref{der2fine} and pull it back on \( B_1\) through \(\Phi_t\):
\begin{equation}
\begin{split}
e''(t)&=\int_{B_1}\big|\nabla \dot u_t\big|^2\circ \Phi_t \,\det \nabla \Phi_t\, dx\\
&- \int_{\partial  B_1}\Big\{\big(X \cdot \nu_{ \Omega_t}\big)^2|\nabla u_t|-\big(X \cdot \nu_{ \Omega_t}\big)^2|\nabla u_t|^2\,\mathscr{H}_{\partial \Omega_t}\Big\} \circ \Phi_t \, J^{\partial B_1}\Phi_t\,d\mathcal H^{N-1}\\
&- \int_{\partial B_1}\Big\{\big(X \cdot \nu_{ \Omega_t}\big)\big(\nabla^2 u_t[\nabla u_t]\cdot X^\tau\big)\Big\}\circ \Phi_t\, J^{\partial B_1}\Phi_t \,d\mathcal H^{N-1}\\
&:=I_1(t)+I_2(t)+I_3(t),
\end{split}
\end{equation}
where \(J^{\partial B_1}\Phi_t\) is the \emph{tangential Jacobian} of \(\Phi_t\) (see \cite[Section 11.1]{Maggi}) and we have dropped the subscript \(\varphi\) for notational simplicity.  By \eqref{c2close} we get   
\begin{equation}
\label{facili}
\big \|\mathscr  H_{\partial \Omega_t}\circ \Phi_t-\mathscr  H_{\partial B_1}\big\|_{L^\infty(\partial B_1)}+\big\|J^{\partial B_1}\Phi_t-1\big\|_{L^\infty(\partial B_1)}+\big\|\det \nabla \Phi_t-1\big\|_{L^\infty( B_1)}\le \omega\big(\|\varphi\|_{C^{2,\gamma}}\big).
\end{equation}
where here and in the following \(\omega\) will just denote a modulus of continuity whose precise expression will change line by line.
Moreover pulling back to \(B_1\) the equation satisfied by \(u_t\), i.e. considering the equation satisfyied by \(u_t\circ \Phi_t\) on \(B_1\), Schauder estimates give
\begin{equation}
\label{menofacili}
\|u-u_t\circ \Phi_t\|_{C^{2,\gamma}(\overline{B_1})}\le  \omega\big(\|\varphi\|_{C^{2,\gamma}}\big).
\end{equation}
By Lemma \ref{vectorfield} (i),  \(X\) is parallel to \(\theta=x/|x|\) in neighborhood of \(\partial B_1\), hence
\begin{equation}
\label{norm}
\begin{split}
\big|(X\cdot \nu_{\Omega_t})\circ \Phi_t-X \cdot \nu_{B_1}\big|&=\big|\big((X\cdot\theta)\circ\Phi_t\big)\,\big((\theta \cdot \nu_{\Omega_t})\circ \Phi_t\big)-X \cdot \nu_{B_1}\big|\\
&\le \big|(X\cdot\theta)\circ\Phi_t\big|\,\big|(\theta \cdot \nu_{\Omega_t})\circ \Phi_t-1\big|+\big|(X\cdot\theta)\circ\Phi_t-X \cdot \nu_{B_1}\big|\\
& \le  \omega\big(\|\varphi\|_{C^{2,\gamma}}\big)\, |X\cdot \nu_{B_1}|.
\end{split}
\end{equation}
where in the last inequality we have used \eqref{vero}.
 With the same computations we also get,
\begin{equation}\label{tan}
|X^\tau\circ \Phi_t|\le \omega\big(\|\varphi\|_{C^{2,\gamma}}\big)\, |X\cdot \nu_{B_1}|.
\end{equation}
By \eqref{facili}--\eqref{tan} we deduce
\begin{equation}
\label{tantafaticaperniente}
\big |I_2(t)-I_2(0)\big |+\big | I_3(t)\big|\le {\omega}\big(\|\varphi\|_{C^{2,\gamma}}\big)\, \big\|X\cdot \nu_{B_1}\big\|^2_{L^2(\partial B_1)}.
\end{equation}
Since $I_3(0)=0$, we are left to estimate \(I_1(t)-I_1(0)\). Defining \(v_t=\dot u_t\circ \Phi_t\) we have \(\nabla v_t=(\nabla \Phi_t)^T\,\nabla u_t\circ \Phi_t\), where $M^T$ denotes the transposition of a matrix $M$.
Hence, taking into account \eqref{c2close} and \eqref{facili}, it is an easy computation to  see that the proof of the Lemma will be concluded once we have shown that
\begin{equation}
\label{moccolo}
\Big|\int_{B_1} |\nabla v_t|^2-|\nabla \dot u_0|^2 \, dx\Big|\le   \ {\omega}\big(\|\varphi\|_{C^{2,\gamma}}\big) \big\|X\cdot \nu_{B_1}\big\|^{2}_{H^{1/2}(\partial B_1)}.
\end{equation}
Now, from \eqref{energyder}, we see that  \(v_t\) solves the linear elliptic problem
\[
\begin{cases}
\mathrm{div} \Big( M_t\,\nabla v_t\Big)=0\quad &\text{in \(B_1\)},\\
v_t=-(\nabla u_t\cdot X)\circ \Phi_t &\text{on \(\partial B_1\)},
\end{cases}
\]
where $M_t$ is the symmetric positive definite matrix given by
\[
M_t=\det \nabla \Phi_t\, \big((\nabla \Phi_t)^{-1}\big)^T\,(\nabla \Phi_t)^{-1} .
\]
Hence, classical elliptic estimates  together with \eqref{c2close} give
\begin{equation}
\label{moccolo2}
\begin{split}
\big\|\nabla v_t&-\nabla \dot u_0\big\|_{L^2(B_1)}\\
&\le C(N)\,\Big( \big\|\big(M_t -{\rm Id}\big)\,\nabla v_t\big\|_{L^2(B_1)}+ \big\|(\nabla u_t\cdot X)\circ \Phi_t -\nabla u_0\cdot X \big\|_{H^{1/2}(\partial B_1)}\Big)\\
&\le \omega\big(\|\varphi\|_{C^{2,\gamma}}\big)\, \big\|\nabla v_t\big\|_{L^{2}( B_1)}+ C(N)\, \big\|(\nabla u_t\cdot X)\circ \Phi_t -\nabla u_0\cdot X \big\|_{H^{1/2}(\partial B_1)}.
\end{split}
\end{equation}
Now by Lemma \ref{vectorfield} (i) \(X=(X\cdot \theta)\,\theta\), where \(\theta=x/|x|\).  Since  \(\nabla u_0=-|\nabla u_0|\,\theta\) on \(\partial B_1\), by using \eqref{vero} and \eqref{menofacili} we get
\begin{equation}
\label{moccolo3}
\begin{split}
\big\|(\nabla u_t\cdot X)\circ \Phi_t& -\nabla u_0\cdot X \big\|_{H^{1/2}(\partial B_1)}\\
&\le \big\|\big\{( \nabla u_t \cdot \theta)\circ \Phi_t -(\nabla u_0\cdot\theta) \big\}(X\cdot \theta)\circ \Phi_t \big\|_{H^{1/2}(\partial B_1)}\\
&\quad+\big\||\nabla u_0|\, \big((X\cdot \theta)\circ \Phi_t -X\cdot \nu_{B_1}\big) \big\|_{H^{1/2}(\partial B_1)}\\
&\le \big\|\big\{\big(\nabla u_t\circ\Phi_t-\nabla (u_t\circ\Phi_t)\big) \cdot( \theta\circ \Phi_t) \big\}(X\cdot \theta)\circ \Phi_t \big\|_{H^{1/2}(\partial B_1)}\\
&\quad +\big\|\big\{\nabla (u_t\circ\Phi_t)\cdot(\theta\circ\Phi_t) -(\nabla u_0\cdot\theta) \big\}(X\cdot \theta)\circ \Phi_t \big\|_{H^{1/2}(\partial B_1)}\\
&\quad +\big\| |\nabla u_0|\,\big((X\cdot \theta)\circ \Phi_t -X\cdot \nu_{B_1}\big) \big\|_{H^{1/2}(\partial B_1)}\\
&\le \omega\big(\|\varphi\|_{C^{2,\gamma}}\big) \big\|\nabla u_0\cdot X \big\|_{H^{1/2}(\partial B_1)}.
\end{split}
\end{equation}
Equations \eqref{moccolo2} and \eqref{moccolo3} imply
\begin{equation}
\begin{split}
\big\|\nabla v_t&-\nabla \dot u_0\big\|_{L^2(B_1)}\\
& \le \omega\big(\|\varphi\|_{C^{2,\gamma}}\big)\, \left(\big\|\nabla v_t\big\|_{L^{2}( B_1)}
+\big\|\nabla u_0\cdot X \big\|_{H^{1/2}(\partial B_1)}\right)\\
&\le\omega\big(\|\varphi\|_{C^{2,\gamma}}\big)\,\left( \big\|\nabla v_t-\nabla \dot u_0\big\|_{L^2(B_1)}
+\big\|\nabla \dot u_0 \big\|_{L^{2}( B_1)}
+\big\|\nabla u_0\cdot X \big\|_{H^{1/2}(\partial B_1)}\right)\\
&\le  \omega \big(\|\varphi\|_{C^{2,\gamma}}\big)\, \left(\big\|\nabla v_t-\nabla \dot u_0\big\|_{L^2(B_1)}+2\, \big\|\nabla u_0\cdot X \big\|_{H^{1/2}(\partial B_1)}\right),
\end{split}
\end{equation}
where in the last inequality we have used Definition \ref{def:hunmezzo}, since \(\dot u_0=-H(\nabla u_0\cdot X)\).
Choosing \(\delta_5\)  so that \(\omega\big(\|\varphi\|_{C^{2,\gamma}}\big)\le1/2\),  we finally get
\begin{equation}\label{nonfiniscepiu}
\left\|\nabla v_t-\nabla \dot u_0\right\|_{L^2(B_1)}\le 4\, \omega\big(\|\varphi\|_{C^{2,\gamma}}\big) \big\|\nabla u_0\cdot X \big\|_{H^{1/2}(\partial B_1)}.
\end{equation}
Since, clearly
\[
\begin{split}
\Big|\int_{B_1} |\nabla v_t|^2-|\nabla \dot u_0|^2 \, dx\Big|&\le\|\nabla v_t -\nabla \dot u_0\|_{L^2(B_1)}\|\nabla v_t+\nabla\dot u_0\|_{L^2(B_1)}\\
&\le 2\,\|\nabla \dot u_0\|_{L^2(B_1)}\,\|\nabla v_t -\nabla \dot u_0\|_{L^2(B_1)}+\|\nabla v_t -\nabla \dot u_0\|_{L^2(B_1)}^2,
\end{split}
\]
equation \eqref{moccolo} follows from \eqref{nonfiniscepiu} and our definition of \(H^{1/2}\) norm.
\end{proof}
We can now prove Lemma \ref{dambrin}.
\begin{proof}[Proof of Lemma \ref{dambrin}.]
By Taylor formula,
\begin{equation}
\label{taylorint}
E(\Omega)=E(B_1)+e'(0)+\frac 1 2 e''(0)+\int_0^1 (1-s) \big( e''(s)-e(0)\big)\, ds.
\end{equation}
Since \(|\Omega_t|=|B_1|\), by the Saint-Venant inequality we have \(e'(0)=0\). Equation \eqref{palladerivata} gives
\begin{equation*}
\begin{split}
e''(0)&=\frac {1}{N^2} \Big(\int_{ B_1} |\nabla H(X\cdot\nu_{ B_1})|^2\,dx-\int_{ B_1}\big(X\cdot\nu_{B_1})^2\,d\mathcal H^{N-1}\Big)\\
&=\partial^2 E(B_1)\big[X\cdot\nu_{ B_1},X\cdot\nu_{ B_1}\big].
\end{split}
\end{equation*}
Since,
\begin{equation*}
\begin{split}
\Big|\partial^2 E(B_1)\big[X\cdot\nu_{ B_1},X\cdot\nu_{ B_1}\big]-\partial^2 E(B_1)\big[\varphi,\varphi\big]\Big|&\le \|X\cdot \nu_{B_1}-\varphi\|_{H^{1/2}(\partial B_1)}\|X\cdot \nu_{ B_1}+\varphi\|_{H^{1/2}(\partial B_1)}
\end{split}
\end{equation*}
equation \eqref{taylor}  follows by  \eqref{hunmezzovicine}, \eqref{taylorint} and \eqref{evai}.
\end{proof}

\begin{ack}
It is a pleasure to acknowledge Marco Barchiesi, Nicola Fusco and Aldo Pratelli for some useful discussions at a preliminary stage of this work. We also thank Jimmy Lamboley for pointing out to us paper \cite{Da}. Nikolai Nadirashvili kindly provided us a copy of \cite{HN} and \cite{Na}, we wish to warmly thank him. A visit of the second author to Marseille has been the occasion to fix some final details: the LATP institution and its facilities are kindly acknowledged. 
\end{ack}


\begin{thebibliography}{100}

\bibitem{AFM} E. Acerbi, N. Fusco, M. Morini, Minimality via second variation for a nonlocal isoperimetric problem, to appear in Comm. Math. Phys. (2013), available at {\tt http://arxiv.org/abs/1211.0164}
\bibitem{AAC} N. Aguilera, H. W. Alt, L. A. Caffarelli, An optimization problem with volume constraint, SIAM J. Control Optim., {\bf 24} (1986), 191--198.
\bibitem{AC} H. W. Alt, L. A. Caffarelli, Existence and regularity for a minimum problem with free boundary, J. Reine Angew. Math., {\bf 325} (1981), 105--144. 
\bibitem{ATW} F. J. Almgren, J. E. Taylor, L. Wang, A variational approach to motion by weighted mean curvature, SIAM J. Control Optim., {\bf 31} (1993), 387--438.
\bibitem{BCFP} M. Barchiesi, G. M. Capriani, N. Fusco, G. Pisante, Stability of P\'olya-Szeg\H{o} inequality for log-concave functions, preprint (2013), available at {\tt http://cvgmt.sns.it/paper/2142/}
\bibitem{Bha} T. Bhattacharya, Some observations on the first eigenvalue of the $p-$Laplacian and its connections with asymmetry, Electron. J. Differential Equations, {\bf 35} (2001), 15 pp.
\bibitem{BW} T. Bhattacharya, A. Weitsman, Estimates for Green's function in terms of asymmetry,
AMS Contemporary Math. Series, {\bf 221} 1999, 31--58.
\bibitem{BDF} V. B\"ogelein, F. Duzaar, N. Fusco, A sharp quantitative isoperimetric inequality in higher codimension, preprint (2012), available at {\tt http://cvgmt.sns.it/paper/1865/}
\bibitem{Br} L. Brasco, On torsional rigidity and principal frequencies: an invitation to the Kohler-Jobin rearrangement technique, to appear on ESAIM Control Optim. Calc. Var. (2013), available at {\tt http://cvgmt.sns.it/paper/2083/}
\bibitem{BDP} L. Brasco, G. De Philippis, B. Ruffini, Spectral optimization for the Stekloff-Laplacian: the stability issue, J. Funct. Anal., {\bf 262} (2012), 4675--4710.
\bibitem{BP} L. Brasco, A. Pratelli, Sharp stability of some spectral inequalities, Geom. Funct. Anal., {\bf 22} (2012), 107--135.
\bibitem{B} T. Brian\c{c}on, Regularity of optimal shapes for the Dirichlet's energy with volume constraint, ESAIM Control Optim. Calc. Var., {\bf 10}  (2004),  99--122
\bibitem{BL} T.  Brian\c{c}on, J. Lamboley, Regularity of the optimal shape for the first eigenvalue of the Laplacian with volume and inclusion constraints, Ann. Inst. H. Poincar\'e Anal. Non Lin\'eaire, {\bf 26} (2009), 1149--1163.
\bibitem{BuBu} D.~Bucur, G.~Buttazzo, Variational Methods in Shape Optimization Problems.
Progress in Nonlinear Differential Equations {\bf65}, Birkh\"auser Verlag,
Basel (2005). 
\bibitem{CL} M. Cicalese, G. P. Leonardi, A selection principle for the sharp quantitative isoperimetric inequality, Arch. Ration. Mech. Anal., {\bf 206} (2012), 617--643.
\bibitem{Da} M. Dambrine, On variations of the shape Hessian and sufficient conditions for the stability of critical shapes, Rev. R. Acad. Cien. Serie A Mat., {\bf 9} (2002), 95--121.  
\bibitem{DM} G. De Philippis, F. Maggi, Sharp stability inequalities for the Plateau problem, preprint (2011) available at \texttt{http://cvgmt.sns.it/paper/1715/}
\bibitem{EG} L. C. Evans, R. F. Gariepy, Measure theory and fine properties of functions. Studies in Advanced Mathematics. CRC Press, Boca Raton, FL, 1992.
\bibitem{fmpK} A. Figalli, F. Maggi, A. Pratelli, A mass transportation approach to quantitative isoperimetric inequalities, Invent. Math., {\bf 182} (2010),  167--211.
\bibitem{FMPiso} N. Fusco, F. Maggi, A. Pratelli, The sharp quantitative isoperimetric inequality, Ann. of Math. (2) {\bf 168} (2008),  941--980.
\bibitem{FMP1} N. Fusco, F. Maggi, A. Pratelli, Stability estimates for certain Faber-Krahn, Isocapacitary and Cheeger inequalities, Ann. Sc. Norm. Super. Pisa Cl. Sci., {\bf 8}  (2009), 51--71.
\bibitem{GS}B. Gustafsson, H. Shahgholian, Existence and geometric properties of solutions of a free boundary problem in potential theory, J. Reine Angew. Math., {\bf 473} (1996), 137--179. 
\bibitem{HN} W. Hansen, N. Nadirashvili, Isoperimetric inequalities in potential theory, Potential Anal., {\bf 3} (1994), 1--14.
\bibitem{He} A. Henrot, Extremum problems for eigenvalues of elliptic operators. Frontiers in Mathematics. Birkh\"auser Verlag, Basel, 2006.
\bibitem{HP} A. Henrot, M. Pierre, Variation et optimisation de formes. (French) [Shape variation and optimization] Une analyse g\'eom\'etrique. [A geometric analysis] Math\'ematiques \& Applications [Mathematics \& Applications], {\bf 48}. Springer, Berlin, 2005. 
\bibitem{KN} D. Kinderlehrer, L. Nirenberg, Regularity in free boundary problems, Ann. Scuola Norm. Sup. Pisa Cl. Sci., {\bf 4} (1977), 373--391.
\bibitem{KJ82} M.-T. Kohler-Jobin, Symmetrization with equal Dirichlet integrals, SIAM J. Math. Anal., {\bf 13} (1982), 153--161.
\bibitem{KJ} M.-T. Kohler-Jobin, Une m\'ethode de comparaison isop\'erim\'etrique de fonctionnelles de domaines de la physique math\'ematique. I. Une d\'emonstration de la conjecture isop\'erim\'etrique $P\,\lambda^2\ge \pi \, j^4_0/2$ de P\'olya et Szeg\H{o}, Z. Angew. Math. Phys., {\bf 29} (1978), 757--766. 
\bibitem{Maggi} F. Maggi, Sets of Finite Perimeter and Geometric Variational Problems: an Introduction to Geometric Measure Theory. Cambridge Studies in Advanced Mathematics {\bf 135}, Cambridge University Press, 2012.
\bibitem{Me} A. Melas, The stability of some eigenvalue estimates, J. Differential Geom., {\bf 36} (1992), 19-–33.
\bibitem{Mu} C. M\"uller, Analysis of spherical symmetries in Euclidean spaces, Applied Mathematical Sciences {\bf 129}, Springer (1998).
\bibitem{Na} N. Nadirashvili, Conformal maps and isoperimetric inequalities for eigenvalues of the Neumann problem. Proceedings of the Ashkelon Workshop on Complex Function Theory (1996), 197--201, Israel Math. Conf. Proc. {\bf 11}, Bar-Ilan Univ., Ramat Gan, 1997.
\bibitem{Os} R. Osserman, Bonnesen-style isoperimetric inequalities, Amer. Math. Monthly, {\bf 86} (1979), 1--29.
\bibitem{Po} T. Povel, Confinement of Brownian motion among Poissonian obstacles in $\mathbb{R}^d$, $d\ge 3$, Probab. Theory Relat. Fields, {\bf 114} (1999), 177--205.
\bibitem{RW} X. Ren, J. Wei, On a two-dimensional elliptic problem with large exponent in nonlinearity, Trans. Amer. Math. Soc., {\bf 343} (1994), 749--763. 
\bibitem{Sni} A.-S. Sznitman, Fluctuations of principal eigenvalues and random scales, Comm. Math. Phys., {\bf 189} (1997), 337--363.
\bibitem{Ta} G. Talenti, Elliptic equations and rearrangements, Ann. Scuola Norm. Sup. Pisa (4), {\bf 3} (1976), 697--718.
\bibitem{Ta1} G. Talenti, Best constant in Sobolev inequality, Ann. Mat. Pura Appl., {\bf 110} (1976), 353--372.
\end{thebibliography}
\end{document}